\title{\vspace{-0.5cm}Abundance for varieties with many differential forms}
\author{\vspace{0cm}Vladimir Lazi\'c and Thomas Peternell}
\institution{Fachrichtung Mathematik, Campus, Geb\"aude E2.4, Universit\"at des Saarlandes, 66123 Saarbr\"ucken, Germany}\\
\email{lazic@math.uni-sb.de}}\\
\institution{Mathematisches Institut, Universit\"at Bayreuth, 95440 Bayreuth, Germany}\\
\email{thomas.peternell@uni-bayreuth.de}}
\date{\vspace{-5ex}} % Empty date or tweak it according to your needs
\journal{\'Epijournal de G\'eom\'etrie Alg\'ebrique} % Epijournal name
\newtheorem{thm}{Theorem}[section]
\newtheorem{pro}[thm]{Proposition}
\newtheorem{lem}[thm]{Lemma}
\newtheorem{cor}[thm]{Corollary}
\newtheorem{thmA}{Theorem}
\newtheorem{dfn}[thm]{Definition}
\newtheorem{rem}[thm]{Remark}
\newcommand{\Z}{\mathbb{Z}}
\newcommand{\N}{\mathbb{N}}
\newcommand{\C}{\mathbb{C}}
\newcommand{\R}{\mathbb{R}}
\newcommand{\Q}{\mathbb{Q}}
\newcommand{\PS}{\mathbb{P}}
\newcommand{\OO}{\mathcal{O}}
\newcommand{\lto}{\longrightarrow}
\newcommand{\reg}{\mathrm{reg}}
\newcommand{\can}{\mathrm{can}}
\DeclareMathOperator{\codim}{codim}
\DeclareMathOperator{\mult}{mult}
\DeclareMathOperator{\Supp}{Supp}
\DeclareMathOperator{\Div}{Div}
\begin{document}

%%%%%%%%%%%%%%%%%%%%%%%%%%%%%%%
% Add the title to the document
%%%%%%%%%%%%%%%%%%%%%%%%%%%%%%%

\maketitle

%\contribution{}

%%%%%%%%%%%%%%%%%%%%%
% Dedication (if any)
%%%%%%%%%%%%%%%%%%%%%
%\dedication{}

%%%%%%%%%%%%%%%%%%%%%%%%%%%%%%%%%%%%%%%%%%%%%%%%%%%%%%%%%%
% Add abstract, Keywords, MSC classification (recommended)
% Never remove prelims section, make it rather empty
%%%%%%%%%%%%%%%%%%%%%%%%%%%%%%%%%%%%%%%%%%%%%%%%%%%%%%%%%%
\begin{prelims}

%\vspace{-0.55cm}

\def\abstractname{Abstract}
\abstract{We prove that the abundance conjecture holds on a variety $X$ with mild singularities if $X$ has many reflexive differential forms with coefficients in pluricanonical bundles, assuming the Minimal Model Program in lower dimensions. This implies, for instance, that under this condition, hermitian semipositive canonical divisors are almost always semiample, and that klt pairs whose underlying variety is uniruled  have good models in many circumstances. When the numerical dimension of $K_X$ is $1$, our results hold unconditionally in every dimension. We also treat a related problem on the semiampleness of nef line bundles on Calabi-Yau varieties.}

\keywords{abundance conjecture; Minimal Model Program; differential forms; Calabi-Yau varieties}

\MSCclass{14E30; 14F10}

%\vspace{-0.05cm}

\languagesection{Fran\c{c}ais}{%

%\vspace{-0.05cm}
\textbf{Titre. Abondance pour des vari\'et\'es avec beaucoup de formes diff\'erentielles} \commentskip \textbf{R\'esum\'e.} Nous \'etablissons la conjecture d'abondance pour une vari\'et\'e
$X$ avec des singularit\'es mod\'er\'ees ayant beaucoup de formes diff\'erentielles \`a coefficients dans des puissances du fibr\'e canonique, en admettant la validit\'e du Programme des Mod\`eles Minimaux en dimensions strictement inf\'erieures. Cela implique par exemple, que, sous cette condition, les diviseurs canoniques hermitiens semi-positifs sont presque toujours semi-amples, et que les paires klt dont la vari\'et\'e sous-jacente est unir\'egl\'ee ont de bons mod\`eles dans de nombreuses situations. Lorsque la dimension num\'erique de $K_X$ est $1$, nos r\'esultats sont vrais inconditionnellement en toute dimension. Nous traitons \'egalement un probl\`eme similaire concernant la semi-amplitude des fibr\'es en droites nef sur les vari\'et\'es de Calabi-Yau.}

\end{prelims}

%%%%%%%%%%%%%%%%%%%%%
% Content begins here
%%%%%%%%%%%%%%%%%%%%%

\newpage

% Add table of contents (optional)
\setcounter{tocdepth}{1}
\tableofcontents

\section{Introduction}

In this paper we prove that the abundance conjecture holds on a variety $X$ with mild singularities if $X$ has many reflexive differential forms with coefficients in pluricanonical bundles, assuming the Minimal Model Program in lower dimensions. The abundance conjecture and the existence of good models are the main open problems in the Minimal Model Program in complex algebraic geometry. A main step towards abundance is the so called nonvanishing conjecture. Various theorems presented in this paper are the first results on nonvanishing in dimensions $\geq4$ when the numerical dimension of $X$ is not $0$ or $\dim X$. As a by-product, we obtain a new proof of (the most difficult part of) nonvanishing in dimension $3$.

Recall that given a $\Q$-factorial projective variety $X$ with terminal singularities (terminal variety for short),  the Minimal Model Program (MMP) predicts that either $X$ is uniruled and has a birational model which admits a Mori fibration, or $X$ has a birational model $X'$ with terminal singularities such that $K_{X'}$ is nef; the variety $X'$ is called a \emph{minimal model} of $X$. The \emph{abundance conjecture} then says that $K_{X'}$ is semiample, i.e.\ some multiple $mK_{X'}$ is basepoint free; the variety $X'$ is then a \emph{good model} of $X$. For various important reasons it is necessary to study a more general situation of klt pairs $(X,\Delta).$ 

\medskip

We recall briefly the previous progress towards the resolution of these conjectures, concentrating mainly on the case of klt singularities. Everything is classically known for surfaces. For terminal threefolds, minimal models were constructed in \cite{Mor88,Sho85} and abundance was proved in \cite{Miy87,Miy88a,Miy88b,Kaw92,Kol92}. The corresponding generalisations to threefold klt pairs were established in \cite{Sho92} and in \cite{KMM94}. Minimal models for canonical fourfolds exist by \cite{BCHM,Fuj05}, and abundance for canonical fourfolds is known when $\kappa(X,K_X)>0$ by \cite{Kaw85}. In arbitrary dimension, the only unconditional results are the existence of minimal models for klt pairs of log general type proved in \cite{HM10,BCHM} and in \cite{CL12a,CL13} by different methods, the abundance for klt pairs of log general type \cite{Sho85,Kaw85b}, and the abundance for varieties with numerical dimension $0$, see \cite{Nak04}. 

The non-log-general type case is notoriously difficult, and there are only several reduction steps known. The running assumption is that the Minimal Model Program holds in lower dimensions; this is of course completely natural, since the completion of the programme should be done by induction on the dimension. With this assumption in mind, minimal models for klt pairs $(X,\Delta)$ with $\kappa(X,K_X+\Delta)\geq0$ exist by \cite{Bir11}, and good models exist for klt pairs $(X,\Delta)$ with $\kappa(X,K_X+\Delta)\geq1$ by \cite{Lai11}. The existence of good models for arbitrary klt pairs was reduced to the existence of good models for klt pairs $(X,\Delta)$ with $X$ terminal and $K_X$ pseudoeffective in \cite{DL15}.

\medskip

By a result of Lai, which we recall below in Theorem \ref{thm:lai}, the existence of good models for klt pairs is reduced to proving the existence of good models of pairs $(X,\Delta)$ with $\kappa(X,K_X+\Delta)\leq0$. We exploit this fact crucially in this paper. There are two faces of the problem of existence of good models. 
\begin{itemize}
\item The first is \emph{nonvanishing}: showing that if $(X,\Delta)$ is a klt pair with $K_X+\Delta$ pseudoeffective, then $\kappa(X,K_X+\Delta)\geq0$. 
\item The second is \emph{semiampleness}: showing that if $(X,\Delta)$ is a klt pair with $\kappa(X,K_X+\Delta)\geq0$, then any minimal model of $(X,\Delta)$ is good. 
\end{itemize} 
By \cite{DHP13}, nonvanishing is reduced to proving $\kappa(X,K_X)\geq0$ for a smooth variety $X$ with pseudoeffective canonical class.

The approach to semiampleness until now has been to find a suitable extension result for pluricanonical forms as in \cite{DHP13,GM17}, which usually requires working with singularities on reducible spaces. On the other hand, the nonvanishing in dimensions greater than $3$ has remained completely mysterious. The proof by Miyaoka and Kawamata in dimension $3$ resists straightforward generalisation to higher dimensions because of explicit use of surface and $3$-fold geometry.

\medskip

In this work we take a very different approach. The main idea is that the growth of global sections of the sheaves $\Omega_X^{[q]}\otimes\OO_X(mK_X)$ should correspond to the growth of sections of $\OO_X(mK_X)$, where $\Omega_X^{[q]}=(\bigwedge^q \Omega^1_X)^{**}$ is the sheaf of reflexive $q$-differentials on a terminal variety $X$. We use recent advances on the structure of sheaves of $q$-differentials together with the Minimal Model Program for a carefully chosen class of pairs to establish this link; the details of the main ideas of the proof are at the end of this section. 

A hope that such a link should exist was present already in \cite[\S2.7]{DPS01}. The situation here is somewhat similar to the semiampleness conjecture for nef line bundles $\mathcal L$ on varieties $X$ of Calabi-Yau type,\label{page} and we developed our main ideas while thinking about this related problem, see \cite{LOP16}. In that context, it seems that the first consideration of the sheaves $\Omega_X^q\otimes\mathcal L$ appears in \cite{Wi94}, and parts of our proofs here are inspired by some arguments there. Sheaves of this form also appear in \cite{Ver10} in an approach towards the hyperk\"ahler SYZ conjecture.

\medskip

We now discuss our results towards nonvanishing and semiampleness. Below, $\nu(X,D)$ denotes the numerical dimension of a $\Q$-Cartier $\Q$-divisor on a normal projective variety $X$, see Definition \ref{dfn:kappa}. 

\subsection*{Nonvanishing}

The following is our first main result.

\begin{thmA}\label{thm:A}
Let $X$ be a terminal projective variety of dimension $n$ with $K_X$ pseudoeffective. Assume either 
\begin{enumerate}
\item[\rm (i)] the existence of good models for klt pairs in dimensions at most $n-1$, or 
\item[\rm (ii)] that $K_X$ is nef and $\nu(X,K_X)=1$. 
\end{enumerate}
Assume that there exists a positive integer $q$ such that
$$ h^0\big(X,\Omega^{[q]}_X \otimes \OO_X(mK_X)\big)>0$$
for infinitely many $m$ such that $mK_X$ is Cartier. Then $\kappa(X,K_X)\geq0$.
\end{thmA}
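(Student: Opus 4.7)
The plan is to translate the hypothesised abundance of twisted reflexive $q$-forms into $\mathbb{Q}$-effectivity of $K_X$, by combining the structure theory of rank-$1$ subsheaves of $\Omega^{[q]}_X$ with the MMP applied to a carefully chosen auxiliary pair. First I would descend sections from $Y$ to $X$: since $X$ is terminal (hence has rational singularities), the extension theorem of Greb--Kebekus--Kov\'acs--Peternell gives $\pi_*\Omega^q_Y \hookrightarrow \Omega^{[q]}_X$, and because $mK_X$ is Cartier for the relevant $m$, the projection formula converts the hypothesis into
\[
h^0\bigl(X, \Omega^{[q]}_X \otimes \OO_X(mK_X)\bigr) > 0
\]
for infinitely many such $m$. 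For each such $m$, a nonzero section yields (by dualising on the smooth locus and extending reflexively) an injection $\OO_X(-mK_X) \hookrightarrow \Omega^{[q]}_X$; taking its saturation I obtain a rank-$1$ reflexive subsheaf $\shl_m = \OO_X(D_m) \subset \Omega^{[q]}_X$ with $V_m := D_m + mK_X \geq 0$ the effective divisorial zero locus of the section. The klt version of the Bogomolov--Sommese vanishing theorem (Greb--Kebekus--Kov\'acs--Peternell) then gives the decisive inequality
\[
\kappa(X, D_m) \leq q < n.
\]

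The heart of the argument is to leverage this bound together with the effectivity of $V_m = D_m + mK_X$ to produce a nonzero pluricanonical section. Writing $D_m = P_m - N_m$ as the difference of its effective positive and negative parts, I would form the pair $(X, \tfrac{1}{m}P_m)$, which is klt for $m$ large since $X$ is terminal. A direct computation gives
\[
K_X + \tfrac{1}{m}P_m \;\sim_{\mathbb{Q}}\; \tfrac{1}{m}\bigl(V_m + N_m\bigr) \;\geq\; 0,
\]
so this pair has nonnegative Iitaka dimension. Under assumption (i), the hypothesis on the MMP in dimension $\leq n-1$ allows one to apply \cite{Bir11} to obtain a minimal model of $(X, \tfrac{1}{m}P_m)$; if moreover $\kappa(X, K_X + \tfrac{1}{m}P_m) \geq 1$, Lai's theorem (Theorem \ref{thm:lai}) produces a good model $(X', \tfrac{1}{m}P'_m)$. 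In the Iitaka-dimension-zero case, the minimal model satisfies $K_{X'} + \tfrac{1}{m}P'_m \equiv 0$; combined with $K_{X'}$ nef and $P'_m$ effective this forces $P'_m = 0$ and $K_{X'} \equiv 0$, so $\nu(X,K_X) = 0$ and $\kappa(X, K_X) \geq 0$ follows from \cite{Nak04}. In the positive-Iitaka-dimension case, I would use the Bogomolov--Sommese bound $\kappa(X, D_m) \leq q$ to control the contribution of $P'_m$ to the Iitaka fibration of $K_{X'} + \tfrac{1}{m}P'_m$ and so descend an effective pluricanonical divisor to $X$.

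The main obstacle will be precisely this last step: separating the contribution of $P_m$ from that of $K_X$ on the good model of the auxiliary pair, and confirming that the Iitaka fibration of $K_X + \tfrac{1}{m}P_m$ actually yields a section of a positive multiple of $K_X$ alone. The Bogomolov--Sommese bound concerns the difference $D_m = P_m - N_m$ and does not immediately translate into a bound on $P_m$ itself; extracting useful information from it requires a careful analysis of the MMP and of how the saturated subsheaf $\shl_m$ behaves under its steps, presumably varying $m$ through the infinite family supplied by the hypothesis to extract a uniform conclusion. For case (ii), with $\nu(X, K_X) = 1$, the nef reduction of $K_X$ should give a rigid $1$-dimensional fibration structure that permits a direct argument bypassing the MMP hypothesis in higher dimensions, consistent with the theorem's unconditional statement in that setting.
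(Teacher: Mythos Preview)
Your proposal has a genuine gap at exactly the point you flag, and the underlying reason is that Bogomolov--Sommese is the wrong positivity input. That theorem bounds $\kappa(X,D_m)$ \emph{from above}, whereas what is needed is positivity of the \emph{complement}. The paper instead applies the theorem of Campana--P\u{a}un (Theorem~\ref{thm:CP11}): if $K_Y$ is pseudoeffective, any torsion-free quotient of $(\Omega^1_Y)^{\otimes p}$ has pseudoeffective first Chern class. One takes the saturation in $\bigwedge^r(\Omega^1_Y)^{\otimes p}$ of the determinant of the image of $\bigoplus_m \mathcal{O}_Y(-m\pi^*K_X)\to(\Omega^1_Y)^{\otimes p}$, obtaining a \emph{single} line bundle $\mathcal{O}_Y(-F_Y)$ whose cokernel has pseudoeffective $c_1$. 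Pushing forward yields one pseudoeffective divisor $F$ on $X$ and effective divisors $N_m$ with $N_m+F\sim_{\mathbb{Q}}mK_X$ for infinitely many $m$. Your scheme of running an MMP for $(X,\tfrac{1}{m}P_m)$ with a fixed $m$ discards this sequential structure; moreover that MMP is not $K_X$-trivial, so your claim that $K_{X'}$ is nef is unjustified (pseudoeffectivity does survive, which would salvage the conclusion $K_{X'}\equiv0$ once you had $K_{X'}+\tfrac{1}{m}P'_m\equiv0$, but to get the latter from $\kappa=0$ you would already need $\nu=0$, which you have not shown).

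The mechanism you are missing is how the relations $N_p-N_q\sim_{\mathbb{Q}}(p-q)K_X$ are exploited. One fixes $k<\ell$ with $\ell-k$ large and runs a $(K_X+\delta N_k)$-MMP; using Kawamata's bound on lengths of extremal rays together with $K_X+\varepsilon N_\ell\sim_{\mathbb{Q}}K_X+\varepsilon N_k+\varepsilon(\ell-k)K_X$, one checks that every step is $K_X$-\emph{trivial}, so $K_X$ stays nef throughout. On the resulting model the divisors $K_X+\varepsilon N_m$ are semiample for all large $m$, their Iitaka fibrations factor through one another, and the pseudoeffectivity of $F$ then forces $K_X$ to be the pullback of a big divisor from the base. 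For case~(ii) the paper avoids the MMP altogether: when $\nu(X,K_X)=1$, the Hodge index theorem on a general surface section combined with the Nakayama--Zariski decomposition of $F$ shows directly that $K_X$ is numerically equivalent to an effective $\mathbb{Q}$-divisor, and \cite{CKP12} then gives $\kappa(X,K_X)\geq0$; your nef-reduction sketch does not supply this.
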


Recall that  the sheaf $\Omega_X^{[q]}$ of reflexive differentials is defined as the extension $i_*\Omega^q_{X_\reg}$ from the smooth locus $i\colon X_\reg\to X$.  Alternatively, by the results of \cite{GKKP11}, we have $\Omega_X^{[q]} = \pi_*\Omega^q_Y$ for any desingularization $\pi\colon Y \to X$. Hence, the assumption in Theorem \ref{thm:A} can be rephrased by saying that 
$$ h^0\big(Y,\Omega^q_Y \otimes \OO_Y(m\pi^*K_X)\big)>0$$
for some (or any) desingularisation $\pi\colon Y \to X$ and infinitely many $m$ such that $mK_X$ is Cartier.

 We actually prove a stronger statement involving any effective tensor representation of $\Omega_Y^q$, see Theorems \ref{thm:nonvanishingForms} and \ref{thm:nonvanishingFormsnu1} below. As mentioned above, the assumptions on the MMP in lower dimensions are natural and expected for any result towards abundance. 

Using the main results of \cite{DPS01} and \cite{GM17}, our application of Theorem \ref{thm:A} is the following, which shows in particular, that hermitian semipositive canonical divisors are almost always semiample, assuming the MMP in lower dimensions. It follows from Corollaries \ref{cor:nv}, \ref{cor:semipositive} and \ref{thm:nu1}.

\begin{thmA}\label{thm:B}
Let $X$ be a terminal projective variety of dimension $n$ with $K_X$ pseudoeffective and $\chi(X,\OO_X)\neq0$.
\begin{enumerate}
\item[\rm (i)] Assume the existence of good models for klt pairs in dimensions at most $n-1$. If $K_X$ has a singular metric with algebraic singularities and semipositive curvature current, then $\kappa(X,K_X) \geq0$. Moreover, if $K_X$ is hermitian semipositive, then $K_X$ is semiample. 
\item[\rm (ii)] Assume that $K_X$ is nef and $\nu(X,K_X)=1$. Then $\kappa(X,K_X) \geq0$. 
\end{enumerate} 
\end{thmA}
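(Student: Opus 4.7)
\emph{Proof plan.} The plan is to verify the cohomological hypothesis of Theorem A and then apply it. Let $\pi\colon Y\to X$ be a log resolution and set $L=\pi^*K_X$; since $X$ is terminal it has rational singularities, so $\chi(Y,\OO_Y)=\chi(X,\OO_X)\neq 0$. The goal is to find some $q\geq 1$ with $h^0(Y,\Omega_Y^q\otimes\OO_Y(mL))>0$ for infinitely many $m$ such that $mK_X$ is Cartier.

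The Hodge decomposition gives $\chi(Y,\OO_Y)=\sum_q(-1)^q h^0(Y,\Omega_Y^q)$, so either some $h^0(Y,\Omega_Y^q)>0$ with $q\geq 1$, or $\chi(Y,\OO_Y)=1$ and one must look at $m\neq 0$. Writing $P_q(m)=\chi(Y,\Omega_Y^q\otimes L^m)$, these are polynomials in $m$ by Hirzebruch--Riemann--Roch. A Koszul-based computation yields the identity $\sum_q(-1)^q P_q(m)=\chi_{\mathrm{top}}(Y)$, constant in $m$, while the leading behaviour of each $P_q$ is controlled by intersection numbers of $c_1(L)$ with Chern classes of $Y$. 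Combining these with $\chi(Y,\OO_Y)\neq 0$, one extracts some $q_0\geq 1$ for which $P_{q_0}(m)$ is nonzero for infinitely many $m$.

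To convert nonvanishing of $\chi$ into nonvanishing of $h^0$ I would invoke \cite{DPS01}: for an HPS line bundle $L$ on a smooth projective $Y$, the higher cohomology $H^i(Y,\Omega_Y^q\otimes L^m)$ for $i\geq 1$ is controlled (and vanishes in favourable numerical ranges) as $m\to\infty$, while Serre duality disposes of the top cohomology. Hence $h^0(Y,\Omega_Y^{q_0}\otimes L^m)\neq 0$ for infinitely many $m$. For the singular-metric case with algebraic singularities, one passes to a further log resolution on which the singularities become an SNC $\Q$-divisor, decomposes $L$ into an HPS part plus an effective boundary, and reduces to the HPS situation. Theorem A (i) now gives $\kappa(X,K_X)\geq 0$; the moreover statement (semiampleness of HPS $K_X$) follows by producing a good model via \cite{Lai11} and transporting semiampleness back to $X$ through the extension/injectivity properties of HPS metrics from \cite{GM14}, all using the assumed MMP in dimensions $\leq n-1$. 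Case (ii) runs analogously but applies Theorem A (ii); the condition $\nu(X,K_X)=1$ makes each $P_q(m)$ linear in $m$, simplifying the analysis, and no MMP hypothesis in lower dimensions is needed.

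\textbf{The main obstacle} is the passage from $\chi(X,\OO_X)\neq 0$ to actual nonvanishing of $h^0$ of a twisted differential form, especially in the subcase $\chi(Y,\OO_Y)=1$ in which $Y$ carries no holomorphic forms of positive degree; there the twist by $L^m$ and the HPS positivity of $K_X$ must be used in an essential way. Semipositivity of $K_X$ is indispensable throughout to control the higher cohomology terms appearing on the right-hand side of Riemann--Roch.
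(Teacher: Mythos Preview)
Your plan has the direction of the DPS hard Lefschetz theorem inverted, and this is a genuine gap. The result of \cite{DPS01} is the surjectivity of
\[
H^0\big(Y,\Omega_Y^{n-q}\otimes\mathcal L\otimes\mathcal I(h)\big)\longrightarrow H^q\big(Y,K_Y\otimes\mathcal L\otimes\mathcal I(h)\big),
\]
so it lets you kill the target once the source vanishes; it gives no control over $H^i(Y,\Omega_Y^q\otimes L^{\otimes m})$ for $i\geq1$, and no Nakano-type vanishing for $\Omega^q$-twists is available for merely nef or hermitian semipositive $L$. Thus your step ``convert $\chi\neq0$ into $h^0\neq0$'' has no mechanism behind it. Two smaller issues compound this: your identity $\sum_q(-1)^qP_q(m)=\chi_{\mathrm{top}}(Y)$ does not isolate a $q_0\geq1$ with $P_{q_0}\not\equiv0$ (the term $P_0$ can absorb the constant, and $\chi_{\mathrm{top}}(Y)$ is unrelated to the hypothesis $\chi(\OO_Y)\neq0$); and even when $h^0(\Omega_Y^q)>0$ at $m=0$, Theorem~A requires infinitely many $m$, and producing further sections would already need $\kappa(X,K_X)\geq0$. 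You flagged the case $\chi(\OO_Y)=1$ as the main obstacle yourself, and your scheme does not resolve it.

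The paper argues by contradiction, which is precisely what makes the DPS input usable. Assume $\kappa(X,K_X)=-\infty$; then the strong form of Theorem~A (valid for every $p$, not just one) forces $H^0\big(Y,\Omega_Y^p\otimes\OO_Y(m\pi^*K_X)\big)=0$ for \emph{all} $p$ and all sufficiently divisible $m$. Hence every DPS source vanishes, hence $H^q\big(Y,\OO_Y(K_Y)\otimes\pi^*\OO_X(m\ell K_X)\otimes\mathcal I(h^{\otimes m})\big)=0$ for every $q$, so by Serre duality the Euler characteristic of a line bundle depending polynomially on $m$ vanishes identically, giving $\chi(Y,\OO_Y)=0$, a contradiction. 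Semiampleness in the hermitian semipositive case then follows from $\kappa\geq0$ via \cite{GM14}. For (ii) the paper does not simply rerun this with the $\nu=1$ version of Theorem~A: it first splits on whether some singular metric on $\pi^*\OO_X(mK_X)$ has nontrivial multiplier ideal. If so, a blow-up together with a Hodge-index argument on a surface section produces an effective $\Q$-divisor numerically equivalent to $K_X$, and one concludes with \cite{CKP12}; if not, all multiplier ideals are trivial and the contrapositive argument above goes through unconditionally.
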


We comment briefly on the assumption $\chi(X,\OO_X) \ne 0$. In the proof of Theorem \ref{thm:B} we argue by contradiction and produce a desingularisation $\pi\colon Y \to X$ and a divisor $D$ on $Y$ such that $\chi(Y, \OO_Y(mD)) = 0$ for infinitely many $m$. In particular, $\chi(Y,\OO_Y) = \chi(X,\OO_X) = 0$, which gives a contradiction. If $X$ is smooth and $K_X$ is hermitian semipositive, then in fact $D = K_X$. In general, $D$ invokes the canonical bundles of $Y$ and $X$ and divisors coming from multiplier ideal sheaves. 

\subsection*{Semiampleness}

Let $(X,\Delta)$ be a klt pair such that $K_X+\Delta$ is pseudoeffective. By passing to a terminalisation, in order to prove the existence of a good model for $(X,\Delta)$ we may assume that the pair is terminal, and we distinguish two cases. If $K_X$ is not pseudoeffective, then by \cite{BDPP} the variety $X$ is uniruled, and by modifying $X$ via a generically finite map, by \cite{DL15} we may assume that $\kappa(X,K_X)\geq0$. If $K_X$ is pseudoeffective, then assuming nonvanishing, we may also assume that $\kappa(X,K_X)\geq0$.

Therefore, when considering the semiampleness problem, then -- assuming nonvanishing -- we may assume that the pair $(X,\Delta)$ under consideration is terminal and that $\kappa(X,K_X)\geq0$. In this context, the following is our second main result.

\begin{thmA}\label{thm:C}
Let $(X,\Delta)$ be a projective  terminal pair of dimension $n$. Assume either 
\begin{enumerate}
\item[\rm (i)] the existence of good models for klt pairs in dimensions at most $n-1$, and that $\kappa(X,K_X)\geq0$, or 
\item[\rm (ii)] that $\Delta=0$, $K_X$ is nef and $\nu(X,K_X)=1$. 
\end{enumerate}
Assume that there exist a resolution $\pi\colon Y\to X$ of $X$ and a positive integer $q$ such that
$$ h^0\big(Y,\Omega_Y^q(\log\lceil\pi^{-1}_*\Delta\rceil)\otimes \OO_Y(m\pi^*(K_X+\Delta))\big)>\binom{n}{q} \quad\text{for some }m.$$
Then $(X,\Delta)$ has a good model.
\end{thmA}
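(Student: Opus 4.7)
The strategy is to reduce, via Lai's Theorem \ref{thm:lai}, to showing $\kappa(X, K_X+\Delta) \ge 1$, and to extract this from the many-forms hypothesis.

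First, I would pass to a minimal model of $(X,\Delta)$: in case (i) this exists by Birkar's theorem applied to the pair, using $\kappa(X, K_X+\Delta) \ge \kappa(X, K_X) \ge 0$ together with the MMP in lower dimensions; in case (ii), $K_X$ is already nef. Both the hypothesis on global sections of twisted log differentials (computable on any log resolution, by \cite{GKKP11}) and the existence of a good model are birational invariants of $(X, \Delta)$, so I may assume $K_X + \Delta$ is nef. By Lai's theorem, if $\kappa(X, K_X+\Delta) \ge 1$ a good model exists and we are done; hence I may assume $\kappa(X, K_X+\Delta) = 0$ and work towards a contradiction.

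Set $D = \lceil \pi_*^{-1}\Delta \rceil$ and $L = \OO_Y(m\pi^*(K_X+\Delta))$, so $\kappa(Y, L) = 0$. Let $\shf \subset \Omega^q_Y(\log D) \otimes L$ be the saturation of the image of the evaluation map
\[
H^0\bigl(Y, \Omega^q_Y(\log D) \otimes L\bigr) \otimes_{\C} \OO_Y \longrightarrow \Omega^q_Y(\log D) \otimes L,
\]
of generic rank $r \le \binom{n}{q}$. Since $h^0(\shf) > \binom{n}{q} \ge r$, a Pl\"ucker-style construction applied to the generically surjective evaluation $\OO_Y^{\oplus h^0(\shf)} \twoheadrightarrow \shf$ produces at least two linearly independent sections of $\det \shf$; equivalently, the line subsheaf $M := (\det \shf) \otimes L^{\otimes -r} \subset \Lambda^r \Omega^q_Y(\log D)$ satisfies $h^0(M \otimes L^{\otimes r}) \ge 2$. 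Composing $M \hookrightarrow \Lambda^r \Omega^q_Y(\log D)$ with the wedge-multiplication $\Lambda^r \Omega^q_Y(\log D) \to \Omega^{rq}_Y(\log D)$ when $rq \le n$ (or using an iterated variant otherwise), and using that $(X,\Delta)$ is terminal, so that $K_Y + D - \pi^*(K_X + \Delta)$ is effective modulo the fractional part of $\Delta$, these two sections push down to at least two linearly independent sections of $\OO_X(k(K_X + \Delta))$ for a suitable $k$, which contradicts $\kappa(X, K_X + \Delta) = 0$.

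The main obstacle is to guarantee that neither the wedge-multiplication nor the subsequent pushforward collapses the two sections of $\det \shf$ into a single one, since wedges of sections can algebraically degenerate. The threshold $\binom{n}{q}$ is sharp precisely because it is the generic rank of $\Omega^q_Y(\log D)$: only just above it can one force excess sections in the evaluation map, and hence the Pl\"ucker determinant to be non-trivially two-dimensional, unlocking the second pluricanonical section. When $rq > n$ the wedge-multiplication map is unavailable, and the argument must instead rely on a Bogomolov-Sommese-type vanishing for line subsheaves of $\Lambda^r \Omega^q_Y(\log D)$ on the klt log pair $(X, \Delta)$.
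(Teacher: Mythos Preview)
Your determinant construction (taking the saturation of the image of the evaluation map and showing $h^0(\det\shf)\geq 2$) is correct and matches what the paper does via Proposition~\ref{pro:wedge}. The gap is in what comes after. You try to pass from the line subsheaf $M\subseteq\bigwedge^r\Omega^q_Y(\log D)$ to a subsheaf of $\Omega^{rq}_Y(\log D)$ via wedge multiplication, and then somehow into a pluricanonical sheaf. As you yourself note, wedge multiplication is not injective in general, so the two sections can collapse; and even when $rq\leq n$, there is no map $\Omega^{rq}_Y(\log D)\to\OO_Y(k(K_Y+D))$ to push through unless $rq=n$. The ``Bogomolov--Sommese-type vanishing'' you invoke for $rq>n$ only bounds the Kodaira dimension of such a subsheaf from above, which is the wrong direction.

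The paper avoids this entirely. Having produced the saturated line subsheaf $\OO_Y(-F_Y)\subseteq\bigwedge^r\mathcal E'$ (where $\mathcal E'$ is the relevant tensor bundle of log differentials), it looks at the quotient $\mathcal Q$ and applies the Campana--P\u{a}un theorem (Theorem~\ref{thm:CP11}): since $K_Y+\Gamma_Y$ is pseudoeffective, $c_1(\mathcal Q)$ is pseudoeffective. Because $c_1\big(\bigwedge^r\mathcal E'\big)$ is a positive multiple $\ell$ of $K_Y+\Gamma_Y$, the exact sequence gives $-F_Y+F\sim\ell(K_Y+\Gamma_Y)$ with $F$ pseudoeffective. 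Combined with $h^0(N)\geq 2$ where $N=-F_Y+rm_0(K_Y+\Gamma_Y)$, one obtains a relation $N+F\sim (rm_0+\ell)(K_Y+\Gamma_Y)$ with $F$ pseudoeffective and $\kappa(N)\geq 1$. The passage from this relation to $\kappa(K_X+\Delta)\geq 1$ is then the content of the ``MMP with a twist'' in Section~\ref{sec:MMPtwist} (Theorem~\ref{thm:MMPtwist2}), not a direct pushforward. In case~(ii) one uses Corollary~\ref{cor:nu1} instead, which rests on the Hodge index theorem for $\nu=1$.

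A secondary issue: your initial reduction to $K_X+\Delta$ nef by running an MMP on $(X,\Delta)$ is not obviously compatible with the many-forms hypothesis. The quantity $h^0\big(Y,\Omega^q_Y(\log D)\otimes\OO_Y(m\pi^*(K_X+\Delta))\big)$ is not a birational invariant of the pair in the way you claim; under an MMP step the pullbacks differ by effective exceptional divisors with the wrong sign. The paper instead works directly on a log resolution and builds a dlt pair $(Y,\Gamma_Y)$ with $\Supp\Gamma_Y=\Supp D_Y$, which is what allows the final reduction to a klt pair and the application of Theorem~\ref{thm:MMPtwist2}.
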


As above, our results are stronger and apply to any effective tensor representation of the sheaf of logarithmic differentials, see Theorems \ref{thm:abundanceForms} and \ref{thm:nonvanishingFormsnu2} below. Theorem \ref{thm:C} generalises \cite[Theorem 1.5]{Taj14}.

The following applications of Theorem \ref{thm:C} prove semiampleness for a large class of varieties. Theorem \ref{thm:D} follows from Corollaries \ref{cor:chi} and \ref{cor:xx}.

\begin{thmA}\label{thm:D}
Let $(X,\Delta)$ be a  terminal pair of dimension $n$ such that $|\chi(X,\OO_X)|>2^{n-1}$. Assume either 
\begin{enumerate}
\item[\rm (i)] the existence of good models for klt pairs in dimensions at most $n-1$, and that $\kappa(X,K_X)\geq0$, or 
\item[\rm (ii)] that $\Delta=0$, $K_X$ is nef and $\nu(X,K_X)=1$.
\end{enumerate}
Then $(X,\Delta)$ has a good model. 
\end{thmA}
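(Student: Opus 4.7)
The plan is to deduce Theorem D from Theorem \ref{thm:C} by producing, out of the Euler-characteristic hypothesis, a positive integer $q$, a resolution $\pi\colon Y\to X$, and an integer $m$ for which the $h^0$-bound of Theorem \ref{thm:C} is satisfied. All the nontrivial geometry is carried by Theorem \ref{thm:C}; Theorem D itself is essentially a Hodge-theoretic and combinatorial corollary.

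Fix any resolution $\pi\colon Y\to X$. Since terminal singularities are rational, one has $R\pi_*\OO_Y\equ\OO_X$ and therefore $\chi(X,\OO_X)\equ\chi(Y,\OO_Y)$. Hodge symmetry on the smooth projective variety $Y$ then turns the Euler characteristic into a signed count of global holomorphic forms,
$$ \chi(X,\OO_X) \equ \sum_{q=0}^{n}(-1)^q h^0(Y,\Omega_Y^q). $$
Combine this with the elementary identity $\sum_{q\text{ even}}\binom{n}{q}\equ\sum_{q\text{ odd}}\binom{n}{q}\equ 2^{n-1}$: the assumption $|\chi(X,\OO_X)|>2^{n-1}$ forces either the even-$q$ or the odd-$q$ partial sum of $h^0(Y,\Omega_Y^q)$ to strictly exceed $2^{n-1}$. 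A pigeonhole argument then yields an index $q\in\{0,\dots,n\}$ with $h^0(Y,\Omega_Y^q)>\binom{n}{q}$; this $q$ is necessarily positive, because the $q=0$ term contributes only $h^0(Y,\OO_Y)-\binom{n}{0}\equ 0$ on the connected variety $Y$ and so cannot account for the strict excess.

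With this $q$ fixed, the natural inclusion $\Omega_Y^q\subseteq\Omega_Y^q(\log\lceil\pi^{-1}_*\Delta\rceil)$ is already enough to produce, with $m=0$,
$$ h^0\bigl(Y,\Omega_Y^q(\log\lceil\pi^{-1}_*\Delta\rceil)\otimes\OO_Y(m\pi^*(K_X+\Delta))\bigr)>\binom{n}{q}; $$
if one prefers a positive $m$, one may multiply by a nonzero section of $\OO_X(m(K_X+\Delta))$, which exists from $\kappa(X,K_X)\geq 0$ in case (i) and from Theorem \ref{thm:B}(ii) in case (ii) (applicable because $|\chi(X,\OO_X)|>2^{n-1}\geq 1$). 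In either case the hypothesis of the corresponding part of Theorem \ref{thm:C} is verified, and Theorem \ref{thm:C} then produces a good model for $(X,\Delta)$. The real obstacle in this strategy is hidden inside the proof of Theorem \ref{thm:C} rather than inside Theorem D itself.
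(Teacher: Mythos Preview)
Your proposal is correct and follows essentially the same route as the paper: the paper deduces Theorem~D from Corollaries~\ref{cor:chi} and~\ref{cor:xx}, which are proved precisely by taking $m=0$ in Theorems~\ref{thm:abundanceForms} and~\ref{thm:nonvanishingFormsnu2} (the stronger forms of Theorem~C), using Hodge symmetry on a resolution together with rationality of terminal singularities to turn the bound $|\chi(X,\OO_X)|>2^{n-1}$ into the existence of some $q$ with $h^0(Y,\Omega_Y^q)>\binom{n}{q}$. Two minor remarks: the detour through a positive $m$ is unnecessary since the relevant theorems are stated for all $m$ with $m(K_X+\Delta)$ Cartier, including $m=0$; and to make sense of the logarithmic sheaf you should choose $\pi$ to be a log resolution of $(X,\Delta)$ rather than an arbitrary resolution.
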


When the underlying variety of a klt pair is uniruled, by using the main result of \cite{DL15} we can say much more: that it in many circumstances has a good model, assuming the MMP in lower dimensions.

\begin{thmA}\label{thm:E}
Assume the existence of good models for klt pairs in dimensions at most $n-1$. Let $(X,\Delta)$ be a klt pair of dimension $n$ such that $X$ is uniruled and $K_X+\Delta$ is pseudoeffective. If $\vert\chi(X,\OO_X)\vert> 2^{n-1}$, then $(X,\Delta)$ has a good model.
\end{thmA}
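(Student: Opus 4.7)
My plan is to deduce Theorem~\ref{thm:E} from Theorem~\ref{thm:D}(i) by applying the uniruled reduction step of~\cite{DL15}. Since $X$ is uniruled, the theorem of~\cite{BDPP} implies that $K_X$ is not pseudoeffective, so $\kappa(X,K_X)=-\infty$ and Theorem~\ref{thm:D}(i) cannot be applied to $(X,\Delta)$ as it stands. However, as recalled in the discussion preceding Theorem~\ref{thm:C}, under the inductive assumption that good models exist for klt pairs in dimensions at most $n-1$, the main result of~\cite{DL15} allows one to replace $(X,\Delta)$ by a $\Q$-factorial terminal klt pair $(X',\Delta')$ of the same dimension, obtained from $(X,\Delta)$ by a generically finite map $f\colon X'\to X$ together with birational modifications, such that $\kappa(X',K_{X'})\geq 0$ and such that $(X,\Delta)$ has a good model if and only if $(X',\Delta')$ does.

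The crucial verification is that the hypothesis $|\chi(X,\OO_X)|>2^{n-1}$ is inherited by $X'$. Since the generically finite cover $f$ produced in~\cite{DL15} is étale in codimension one, and the intermediate birational modifications preserve $\chi$ because klt singularities are rational, one has
\[ \chi(X',\OO_{X'}) = \deg(f)\cdot \chi(X,\OO_X), \]
so in particular $|\chi(X',\OO_{X'})| \geq |\chi(X,\OO_X)| > 2^{n-1}$.

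Applying Theorem~\ref{thm:D}(i) to $(X',\Delta')$ then yields a good model of $(X',\Delta')$, and the equivalence coming from~\cite{DL15} gives a good model of $(X,\Delta)$. The main obstacle I anticipate lies in the second paragraph: one must verify directly from the construction of~\cite{DL15} that the generically finite cover involved is étale in codimension one, or more generally that any correction term coming from the splitting $f_*\OO_{X'}\iso\OO_X\oplus\EE$ does not destroy the inequality $|\chi|>2^{n-1}$. Once this bookkeeping is settled, the theorem follows from the quoted results almost at once.
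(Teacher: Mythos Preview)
Your overall strategy---reduce via \cite{DL15} to a pair with effective canonical divisor and then invoke the results of Section~\ref{sec:thmC}---is exactly what the paper does. The gap is in your second paragraph, and you have correctly flagged it as the obstacle: the finite covers produced in the proof of \cite[Theorem 3.5]{DL15} are \emph{cyclic covers branched along divisors}, and are therefore in general not \'etale in codimension one. So the multiplicativity $\chi(X',\OO_{X'})=\deg(f)\cdot\chi(X,\OO_X)$ simply fails. What one does get, from the trace splitting \cite[Proposition 5.7]{KM98}, is the termwise inequality $h^q(X',\OO_{X'})\geq h^q(X,\OO_X)$ for each $q$; but since $\chi$ is an alternating sum this does \emph{not} imply $|\chi(X',\OO_{X'})|\geq|\chi(X,\OO_X)|$, and your proposed application of Theorem~\ref{thm:D}(i) to $(X',\Delta')$ does not go through.

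The paper fixes this by reversing the logical direction and working with the individual Hodge numbers rather than $\chi$. One argues by contraposition: suppose $(X,\Delta)$ has no good model. After passing to a log smooth model as in \cite[Lemma 2.9]{DL15}, the construction of \cite[Theorem 3.5]{DL15} produces a chain $T\stackrel{\mu}{\to} W\stackrel{g}{\to} X'\stackrel{\pi}{\to} X$ with $\pi,\mu$ finite and $g$ birational, and a log smooth klt pair $(T,\Delta_T)$ with $|K_T|\neq\emptyset$ and the same $\kappa$ and $\nu$ as $(X,\Delta)$. Thus $(T,\Delta_T)$ has no good model either, and Corollary~\ref{cor:chi} applied on $T$ gives $h^q(T,\OO_T)\leq\binom{n}{q}$ for every $q$. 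Now the direct-summand splittings push these bounds \emph{down} the tower: after checking that $X'$ is klt (hence has rational singularities, so $h^q(W,\OO_W)=h^q(X',\OO_{X'})$), one obtains $h^q(X,\OO_X)\leq h^q(T,\OO_T)\leq\binom{n}{q}$ for all $q$, and therefore $|\chi(X,\OO_X)|\leq 2^{n-1}$. The point is that the termwise inequality $h^q(\text{base})\leq h^q(\text{cover})$ is exactly what is needed when one bounds $h^q$ on the cover first and then descends, whereas your direct argument tried to use it in the wrong direction.
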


Theorem \ref{thm:E} follows from Theorem \ref{thm:uniruled} below.

\subsection*{Nef bundles on Calabi-Yau varieties}

As mentioned on page \pageref{page}, the problem of deciding whether the canonical class on a minimal variety is semiample is intimately related to the problem of deciding whether a nef line bundle on a variety with trivial canonical class is semiample. Using very similar ideas, one can find analogues of Theorems \ref{thm:A} and \ref{thm:C} in this second context; this is done in Section \ref{sec:CY}. This method was already crucial in \cite{LOP16}, and we expect it to be of similar use in the future. Immediate consequences of this approach are contained in the following result, which is Theorem \ref{cor:nef}.

\begin{thmA}\label{thm:G}
Assume the existence of good models for klt pairs in dimensions at most $n-1$. Let $X$ be a projective klt variety of dimension $n$ such that $K_X\sim_\Q0$, and let $\mathcal L$ be a nef line bundle on $X$. 
\begin{enumerate}
\item[\rm (i)] Assume that $\mathcal L$ has a singular hermitian metric with semipositive curvature current and with algebraic singularities. If $\chi(X,\OO_X)\neq0$, then $\kappa(X,\mathcal L)\geq0$.
\item[\rm (ii)] If $\mathcal L$ is hermitian semipositive and if $\chi(X,\OO_X)\neq0$, then $\mathcal L$ is semiample.
\end{enumerate} 
\end{thmA}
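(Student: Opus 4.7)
My plan mirrors the derivation of Theorem B from Theorem A via Corollaries \ref{cor:nv} and \ref{cor:semipositive}, adapted to the Calabi--Yau setting. The key task is to show that the hypothesis $\chi(X,\OO_X)\neq 0$ produces nonzero sections of $\Omega_Y^q \otimes \OO_Y(m\pi^*\mathcal{L})$ on a resolution $\pi\colon Y\to X$ for infinitely many $m$, and then to invoke the Calabi--Yau analogues of Theorems A and C that are developed in Section \ref{sec:CY}.

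The first step is Hodge-theoretic. Since klt singularities are rational, $H^q(X,\OO_X)\cong H^q(Y,\OO_Y)$ for all $q$, so $\chi(Y,\OO_Y)=\chi(X,\OO_X)\neq 0$; hence $h^q(Y,\OO_Y)\neq 0$ for some $q>0$, and Hodge symmetry on the smooth projective $Y$ yields $h^0(Y,\Omega_Y^q)\neq 0$. The second step converts this untwisted $q$-form into twisted sections: using the singular metric on $\mathcal{L}$ with algebraic singularities and semipositive curvature current in part~(i), or the hermitian semipositivity in part~(ii), I would invoke the positivity results of \cite{DPS01,GM14} in the same spirit as the proof of Corollary \ref{cor:nv}. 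This produces a nonzero section of $\Omega_Y^q \otimes \OO_Y(m\pi^*\mathcal{L})$ for infinitely many $m$ with $m\mathcal{L}$ Cartier.

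Finally, I would apply the Calabi--Yau analogue of Theorem A from Section \ref{sec:CY} to conclude $\kappa(X,\mathcal{L})\geq 0$, which yields part~(i). For part~(ii), hermitian semipositivity allows one to upgrade nonvanishing to semiampleness via the Calabi--Yau analogue of Theorem C. Both analogues are obtained by rerunning the MMP arguments of Theorems A and C with $\mathcal{L}$ playing the role of $K_X$, the constructions being simplified by the condition $K_X\sim_\Q 0$ (so that effective divisors in $|m\mathcal{L}|$ can be directly combined with a small boundary to produce klt pairs to which the inductive MMP hypothesis applies).

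The principal obstacle is the second step: transferring positivity from the metric on $\mathcal{L}$ to the existence of twisted sections of $\Omega_Y^q$ for infinitely many multiples of $\mathcal{L}$. Under the weaker hypothesis of part~(i), where only a singular metric with algebraic singularities is available, this rests on the structural analysis of $\Omega_Y^q \otimes \mathcal{L}$ developed in \cite{DPS01,GM14} after passing to a log-resolution of the metric's singularities; under the hermitian semipositivity of part~(ii) the transfer is cleaner, but one still needs to verify that infinitely many twists admit nonzero sections. A secondary challenge is confirming that the proofs of Theorems A and C carry over essentially verbatim when $K_X$ is replaced by a nef line bundle $\mathcal{L}$ on a Calabi--Yau variety, which is where the assumption on the MMP in dimensions $\leq n-1$ enters.
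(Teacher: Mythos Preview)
Your second step is where the argument breaks, and the direction of reasoning is reversed compared to the paper. You propose to start from an untwisted form in $H^0(Y,\Omega_Y^q)$ and then, using the semipositive metric on $\mathcal{L}$, \emph{produce} sections of $\Omega_Y^q\otimes\OO_Y(m\pi^*\mathcal{L})$ for infinitely many $m$. There is no mechanism for this: tensoring by $\mathcal{L}^{\otimes m}$ requires a section of $\mathcal{L}^{\otimes m}$, which is precisely what you are trying to prove, and the hard Lefschetz theorem of \cite{DPS01} does not manufacture such sections. (Your first step is also not quite right as stated: $\chi(Y,\OO_Y)\neq 0$ does not force $h^q(Y,\OO_Y)\neq 0$ for some $q>0$, since one could have $h^0=1$ and all higher $h^q=0$.)

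The paper runs the logic in the opposite direction, by contradiction. One assumes $\kappa(X,\mathcal{L})=-\infty$ and applies the Calabi--Yau analogue of Theorem~A (Theorem~\ref{thm:FormsCY0}) to conclude that $H^0(Y,\Omega_Y^p\otimes\pi^*\mathcal{L}^{\otimes m})=0$ for \emph{all} $p$ and all sufficiently divisible $m$. Then the hard Lefschetz surjection of Theorem~\ref{thm:DPS} forces $H^p\big(Y,\OO_Y(K_Y)\otimes\pi^*\mathcal{L}^{\otimes m}\otimes\mathcal{I}(h^{\otimes m})\big)=0$ for all $p$; after Serre duality and the explicit description of $\mathcal{I}(h^{\otimes m})$ for metrics with algebraic singularities, this says that a certain numerical polynomial $m\mapsto\chi(Y,\OO_Y(mD))$ vanishes for all sufficiently divisible $m$, hence identically, and evaluating at $m=0$ gives $\chi(X,\OO_X)=\chi(Y,\OO_Y)=0$, the desired contradiction. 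So the metric hypothesis is used not to create twisted sections but to propagate their \emph{absence} to higher cohomology.

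For part~(ii) the paper does not invoke a Theorem~C analogue (which would require $h^0>\binom{n}{q}$, a bound you do not have from $\chi\neq 0$). Instead, once (i) gives $\kappa(X,\mathcal{L})\geq 0$, one picks an effective $L$ with $\mathcal{L}\simeq\OO_X(L)$, notes that $(X,\varepsilon L)$ is klt and $K_X+\varepsilon L\sim_\Q \varepsilon L$ is hermitian semipositive, and concludes semiampleness directly from \cite[Theorem~5.1]{GM14}.
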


Note that when $X$ is a hyperk\"ahler manifold and $\mathcal L$ is a hermitian semipositive line bundle on $X$, then automatically $\chi(X,\OO_X)\neq0$, and it is known unconditionally that $\kappa(X,\mathcal L)\geq0$ by \cite{Ver10}. We obtain in Section \ref{sec:CY} also more precise information when $X$ is a Calabi-Yau manifold of even dimension.

\subsection*{Sketch of the proof}

We now outline the main steps of the proof of Theorem \ref{thm:A}; the proof of Theorem \ref{thm:C} is similar. Assume for simplicity that $X$ is a smooth minimal variety, and that  there exists a positive integer $q$ such that for infinitely many $m$ we have
$$ h^0(X,\Omega^q_X \otimes \OO_X(mK_X))>0.$$
By using the main results of \cite{CP11,CP15}, we first show that there exists a pseudoeffective divisor $F$ and divisors $N_m\geq0$ for infinitely many $m$ with
$$N_m\sim mK_X-F.$$
By the basepoint free theorem and by the pseudoeffectivity of $F$ we may assume that none of $N_m$ are big, and a short additional argument allows to deal with the case where $\kappa(X,N_m)=0$ for infinitely many $m$. Then the results of \cite{Lai11,Kaw91} allow us to run a Minimal Model Program with scaling for a carefully chosen pair $(X,\varepsilon N_k)$, so that $K_X$ is nef at every step of the programme. In other words, replacing $X$ by the obtained minimal model, we may additionally assume that all $K_X+\varepsilon N_m$ are semiample for $m$ sufficiently large. By using the pseudoeffectivity of $F$ again, we show the existence of a fibration $f\colon X\to Y$ to a lower-dimensional variety $Y$ and of a big $\Q$-divisor $D$ on $Y$ such that $K_X\sim_\Q f^*D$. Hence, $\kappa (X,K_X) \geq 0$. 

\subsection*{Outline of the paper}

In Section \ref{sec:prelim} we collect definitions and results which are used later in the paper. Most of the material should be well known, however we decided to give proofs when a good reference could not be found. 

The whole of Section \ref{sec:MMPtwist} is dedicated to the MMP argument hinted at above. The subtlety of the proof consists in finding the right Minimal Model Program to run, in order to preserve all the good properties of the canonical class and to find a birational model of the initial variety, on which the canonical class is the pullback of a $\Q$-divisor from a lower-dimensional base.

In Section \ref{sec:thmA} we prove a more general version of Theorem \ref{thm:A}(i). Apart from the results from Section \ref{sec:MMPtwist}, the main input is the stability of the cotangent bundle \cite{CP11,CP15}, which generalise previous results of \cite{Miy87a}. Using the main result of \cite{DPS01}, we also derive a part of Theorem \ref{thm:B}.

Section \ref{sec:thmC} is devoted to the proofs of Theorem \ref{thm:C} and a part of Theorem \ref{thm:D}. The proofs are similar to those in Section \ref{sec:thmA}, although they are somewhat more involved. 

Under the additional assumption that the numerical dimension of the (log) canonical class is $1$, we can deduce several results unconditionally, and this occupies Section \ref{sec:nd1}. In this special case, one can avoid the MMP techniques from Section \ref{sec:MMPtwist}. Previous unconditional results were only known when the numerical dimension is $0$ or maximal, see \cite{BCHM,Nak04,Dru11}. 

In Section \ref{sec:uniruled}, we consider uniruled varieties and prove Theorem \ref{thm:E}. Following \cite{DL15}, we reduce the existence of good models for uniruled pairs to the case of effective canonical class, so that we can apply the results from Section \ref{sec:thmC}.

Finally, we treat the related problem of the semiampleness of nef line bundles on varieties of Calabi-Yau type in Section \ref{sec:CY}. The techniques are similar to those of the previous sections. We finish by proving Theorem \ref{thm:G}.

\section{Preliminaries}\label{sec:prelim}

In this paper we work over $\C$, and all varieties are normal and projective. 
We often use without explicit mention that if $f\colon X\dashrightarrow Y$ is a birational map between $\Q$-factorial varieties which is either a morphism or an isomorphism in codimension $1$, and if $D$ is a big, respectively pseudoeffective divisor on $X$, then $f_*D$ is a big, respectively pseudoeffective divisor on $Y$.

We start with the following easy result which will be used in the proof of Theorem \ref{thm:MMPtwist}.

\begin{lem}\label{relation}
Let $X$ be a variety and $D$ a reduced Weil divisor on $X$. Let $\{N_m\}_{m\in \N}$ a sequence of effective Weil divisors on $X$ such that $\Supp N_m\subseteq\Supp D$ for every $m$. Then there exist positive integers $k\neq \ell$ such that $N_k\geq N_\ell$.
\end{lem}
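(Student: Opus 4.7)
The plan is to reduce the statement to a standard combinatorial fact about the componentwise order on $\N^r$ (Dickson's lemma).

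First I would write $D = D_1 + \dots + D_r$ as the sum of its distinct prime components, which is possible because $D$ is reduced. Since $\Supp N_m \subseteq \Supp D$ and each $N_m$ is an effective Weil divisor, every $N_m$ can be expressed uniquely as
$$ N_m = \sum_{i=1}^{r} a_{m,i}\, D_i, \qquad a_{m,i} \in \Z_{\geq 0}.$$
This associates to the sequence $\{N_m\}$ a sequence of vectors $v_m = (a_{m,1}, \dots, a_{m,r}) \in \Z_{\geq 0}^{\,r}$, and the relation $N_k \geq N_\ell$ is equivalent to $v_k \geq v_\ell$ componentwise. It therefore suffices to show that any infinite sequence in $\Z_{\geq 0}^{\,r}$ contains two comparable terms.

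I would prove this by induction on $r$. For $r=1$, either the sequence $\{a_{m,1}\}$ is bounded, in which case some value repeats and any two such indices work, or it is unbounded, in which case any unbounded pair is comparable. For the inductive step, look at the first coordinate $\{a_{m,1}\}$: if it is bounded, some value is attained by infinitely many indices and we apply the inductive hypothesis on the remaining $r-1$ coordinates to that subsequence; if it is unbounded, extract an infinite subsequence on which the first coordinate is non-decreasing, and again apply the inductive hypothesis to the other $r-1$ coordinates. Either way, two terms in the extracted subsequence are comparable, hence so are the corresponding $N_k$ and $N_\ell$.

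There is no real obstacle here: the statement is purely combinatorial once one notes that effective divisors supported on $\Supp D$ are encoded by tuples of nonnegative integers. The only subtle point is to make sure we reduce $D$ to its prime components before identifying $N_m$ with a vector, so that the coefficients $a_{m,i}$ are well-defined and the order-preserving correspondence is unambiguous.
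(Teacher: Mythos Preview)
Your approach is essentially the same as the paper's: both reduce the lemma to Dickson's lemma on $\Z_{\geq 0}^{\,r}$ and argue by induction on the number $r$ of prime components of $D$. The paper organises the inductive step slightly differently --- it assumes no $N_k \geq N_1$ for $k \geq 2$, deduces that some coordinate is bounded above by the corresponding coordinate of $N_1$ for infinitely many $k$, and then passes to a subsequence on which that coordinate is \emph{constant} --- but the underlying idea is identical.

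There is one small gap in your unbounded case. After extracting a subsequence on which the first coordinate is non-decreasing, the inductive hypothesis as you have stated it only produces indices $k \neq \ell$ with the last $r-1$ coordinates comparable, say $v'_k \geq v'_\ell$; it does not tell you that $k$ comes after $\ell$ in the subsequence, so you cannot conclude $a_{k,1} \geq a_{\ell,1}$ and hence $v_k \geq v_\ell$. The fix is standard: either strengthen the inductive statement to ``there exist $k < \ell$ with $v_k \leq v_\ell$'' (the well-quasi-order form, which your base case and bounded case already prove), or simply observe that every infinite sequence in $\N$ has an infinite non-decreasing subsequence regardless of boundedness, iterate this over all $r$ coordinates, and take any two terms of the resulting subsequence. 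The paper's variant sidesteps the issue entirely by always reducing to a \emph{constant} coordinate, where the direction of the comparison on that coordinate is automatic.
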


\begin{proof}
Setting $D=\sum_{i=1}^n D_i$, where $D_i$ are prime divisors, the proof is by induction on $n$. We may assume that there does not exist an integer $k\geq2$ such that $N_k\geq N_1$. Then for each $k\geq2$, we have $\mult_{D_i}N_k<\mult_{D_i}N_1$ for some $i$. Hence, by passing to a subsequence and by relabelling, we may assume that $\mult_{D_1}N_k$ is constant for all $k\geq2$, and set 
$$N_k'=N_k-(\mult_{D_1}N_k)D_1.$$
Then $\Supp N_k'\subseteq\Supp(D-D_1)$, and the conclusion follows.
\hfill $\Box$
\end{proof}

\medskip

We need the following easy consequences of the Hodge index theorem and of the existence of the Iitaka fibration.

\begin{lem}\label{lem:hodge}
Let $X$ be a smooth projective surface, and let $L$ and $M$ be divisors on $X$ such that 
$$L^2=M^2=L\cdot M=0.$$ 
If $L$ and $M$ are not numerically trivial, then $L$ and $M$ are numerically proportional.
\end{lem}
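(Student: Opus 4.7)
The plan is to reduce the statement to a direct application of the Hodge index theorem by using an auxiliary ample class. First I would fix any ample divisor $H$ on $X$ and look at the intersection numbers $\alpha = L\cdot H$ and $\beta = M\cdot H$. The Hodge index theorem (in the form: if $A^2>0$ and $B\cdot A=0$, then $B^2\leq 0$ with equality only when $B$ is numerically trivial) applied with $A=H$ and $B=L$ immediately forces $\alpha\neq 0$, since otherwise $L^2=0$ and $L\cdot H=0$ would yield $L\equiv 0$, contrary to hypothesis. The same argument gives $\beta\neq 0$.

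Next I would form the combination $E = \beta L - \alpha M$ and compute both $E\cdot H = \alpha\beta - \alpha\beta = 0$ and, using $L^2=M^2=L\cdot M=0$,
$$E^2 = \beta^2 L^2 - 2\alpha\beta\, L\cdot M + \alpha^2 M^2 = 0.$$
Applying the Hodge index theorem a second time to the pair $(H, E)$, the equality $E^2=0$ together with $E\cdot H = 0$ forces $E\equiv 0$, and hence $L\equiv (\alpha/\beta)M$, which is exactly the claimed numerical proportionality.

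There is no real obstacle here; the only thing to be careful about is the reading of the hypothesis. I am interpreting ``$L$ and $M$ are not numerically trivial'' as ``neither $L$ nor $M$ is numerically trivial'' (which is the only substantive case: if one of them were numerically trivial, proportionality would hold trivially with a zero coefficient). Under that reading, the argument above is self-contained, uses only the Hodge index theorem, and does not require any hypothesis on the Picard number of $X$.
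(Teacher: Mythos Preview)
Your proof is correct and is essentially identical to the paper's own argument: the paper also fixes an ample divisor $H$, notes via the Hodge index theorem that $L\cdot H$ and $M\cdot H$ are nonzero, forms the same linear combination (up to sign and naming), and concludes by a second application of the Hodge index theorem.
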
 

\begin{proof}
Let $H$ be an ample divisor on $X$. By the Hodge index theorem we have $\lambda=L\cdot H\neq0$ and $\mu=M\cdot H\neq 0$, and set $D=\lambda M-\mu L$. Then $D^2=D\cdot H=0$, hence $D\equiv 0$ again by the Hodge index theorem.
\hfill $\Box$
\end{proof}

\begin{lem}\label{lem:iitaka}
Let $X$ be a normal projective variety and let $L$ be a $\Q$-Cartier $\Q$-divisor on $X$ with $\kappa(X,L)\geq0$. Then there exist a resolution $\pi\colon Y\to X$ and a fibration $f\colon Y\to Z$:
\[
\xymatrix{ 
Y \ar[d]^{\pi} \ar[r]^{f} & Z \\
X  & 
}
\]
such that $\dim Z=\kappa(X,L)$, and for every $\pi$-exceptional $\Q$-divisor $E\geq0$ on $Y$ and for a very general fibre $F$ of $f$ we have
$$\kappa\big(F,(\pi^*L+E)|_F\big)=0.$$
\end{lem}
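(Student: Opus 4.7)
I plan to take $\pi\colon Y\to X$ to be a resolution large enough that the classical Iitaka fibration of $\pi^*L$ is realised by a morphism $f\colon Y\to Z$, and then to show that adding any effective $\pi$-exceptional $\Q$-divisor $E$ changes neither the relevant linear systems nor the Iitaka fibration. The vanishing on a very general fibre will then be immediate from the defining property of the Iitaka fibration applied to $\pi^*L+E$.

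First I would invoke Iitaka's theorem to produce such a resolution, so that $\dim Z=\kappa(Y,\pi^*L)=\kappa(X,L)$ and $\kappa(F,\pi^*L|_F)=0$ on a very general fibre. The key technical input I will use is the standard push-forward identity $\pi_*\OO_Y(mE)=\OO_X$ for any effective $\pi$-exceptional $\Q$-divisor $E$ on $Y$ and every sufficiently divisible $m$; this holds because $\pi$ is birational and $X$ is normal, so rational functions with poles confined to the exceptional locus are regular on $X$. Combined with the projection formula, it gives
$$ H^0\bigl(Y,\OO_Y(m(\pi^*L+E))\bigr) \,=\, H^0(X,\OO_X(mL)) \,=\, H^0\bigl(Y,\OO_Y(m\pi^*L)\bigr) $$
for all sufficiently divisible $m$, so that $|m(\pi^*L+E)|=|m\pi^*L|+mE$ as linear systems on $Y$. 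Since adding a fixed divisor to a linear system does not change the induced rational map, the Iitaka rational maps associated with $\pi^*L$ and with $\pi^*L+E$ coincide, and the same morphism $f\colon Y\to Z$ is simultaneously an Iitaka fibration for $\pi^*L+E$. Applying the Iitaka fibration theorem now to $\pi^*L+E$ yields $\kappa\bigl(F,(\pi^*L+E)|_F\bigr)=0$ on a very general fibre $F$, as desired. The degenerate case $\kappa(X,L)=0$, where $Z$ is a point and $F=Y$, is handled directly by the same push-forward identity.

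The only genuine subtlety I expect to encounter is keeping track of the quantifier order: for each fixed $E$ the very general set of good fibres is allowed to depend on $E$, which is exactly what the argument provides. No single fibre can be expected to work uniformly over the (uncountably many) choices of $E$, but the statement does not require this. The push-forward identity itself, and the fact that passing from $|D|$ to $|D+E'|$ with $E'$ fixed leaves the induced rational map unchanged, are routine and I would only cite them.
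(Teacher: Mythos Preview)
Your proposal is correct and follows essentially the same approach as the paper: both arguments establish the linear-system identity $|m(\pi^*L+E)|=|m\pi^*L|+mE$ via the push-forward $\pi_*\OO_Y(mE)=\OO_X$, deduce that the Iitaka fibrations of $\pi^*L$ and $\pi^*L+E$ coincide, and then read off the vanishing on a very general fibre. The only difference is presentational: the paper explicitly threads through the construction and notation of \cite[Theorem~2.1.33]{Laz04} to verify that the same model $X_\infty\to Y_\infty$ serves for both divisors, whereas you summarise this step by invoking the Iitaka fibration theorem for $\pi^*L+E$ directly.
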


\begin{proof}
By passing to multiples, we may assume that $L$ and $E$ are Cartier and not only $\Q$-Cartier.  The result is clear when $\kappa(X,L)=0$, hence we may assume that $\kappa(X,L)\geq1$. The lemma follows essentially from the proof of \cite[Theorem 2.1.33]{Laz04}, and we use the notation from that proof. We may assume that $X_\infty$ is smooth and that $X_\infty=X_{(p)}=X_{(q)}$. By possibly blowing up $X_{(m)}$ more, we may assume that all birational maps $\xi_m\colon X_{(m)}\dashrightarrow X_\infty$ are morphisms. 
\[
\xymatrix{ 
X_{(m)} \ar[r]^{\xi_m} & X_\infty \ar[r]^{\phi_\infty} \ar[d]^{u_\infty} & Y_\infty \\
 & X  & 
}
\]
Then it is clear that for every $m$ we have
$$\big|\xi_m^*\big(m(u_\infty^*L+E)\big)\big|=|M_m|+F_m+mE.$$
Since all the maps in the proof of \cite[Theorem 2.1.33]{Laz04} are defined via multiples of $M_m$, it follows that the morphism $\phi_{\infty}\colon X_\infty\to Y_\infty$ is also a model for the Iitaka fibration of $u_\infty^*L+E$, and in particular, for a very general fibre $F$ of $u_\infty$ we have $\kappa\big(F,(u_\infty^*L+E)|_F\big)=0$. Then we set $\pi:=u_\infty$, $Y:=X_\infty$, $f:=\phi_\infty$ and $Z:=Y_\infty$.
\hfill $\Box$
\end{proof}

\subsection{Good models}

A \emph{pair} $(X,\Delta)$ consists of a normal variety $X$ and a Weil $\Q$-divisor $\Delta\geq0$ such that the divisor $K_X+\Delta$ is $\Q$-Cartier. Such a pair is \emph{log smooth} if $X$ is smooth and if the support of $\Delta$ has simple normal crossings. The standard reference for the foundational definitions and results on the singularities of pairs and the Minimal Model Program is \cite{KM98}, and we use these freely in this paper. 

We recall the definition of minimal and good models.

\begin{dfn}{\rm
Let $(X,\Delta) $ be a log canonical pair and let $Y$ be a $\Q$-factorial variety.  A birational contraction $f\colon X\dashrightarrow Y$ is a \emph{minimal model} for $(X,\Delta)$ if $K_Y + f_*\Delta$ is nef, and if there exist resolutions $p\colon W \to X$ and $q\colon W \to Y$ with $q = f \circ p$ 
such that $p^*(K_X + \Delta) =q^*(K_Y + f_*\Delta) +E$, where $E\geq0$ is a $q$-exceptional $\Q$-divisor which contains the proper transform of every $f$-exceptional divisor in its support. If additionally $K_Y + f_*\Delta$ is semiample, the map $f$ is a \emph{good model} for $(X,\Delta)$.}
\end{dfn}

Here we recall additionally that flips for klt pairs exist by \cite{BCHM}. We also use the MMP with scaling of an ample divisor as in \cite[\S3.10]{BCHM}. 

In this context, the following result says, among other things, that if one knows that a good model for a klt pair $(X,\Delta)$ exists, then one knows that there exists also an MMP which leads to a good model; this will be crucial in the proofs in Section \ref{sec:MMPtwist}.

\begin{thm}\label{thm:lai}
Assume the existence of good models for klt pairs in dimensions at most $n-1$. Let $(X,\Delta)$ be a $\Q$-factorial projective klt pair such that $\kappa(X,K_X+\Delta)\geq1$. Then $(X,\Delta)$ has a good model, and every $(K_X+\Delta)$-MMP with scaling of an ample divisor terminates with a good model of $(X,\Delta)$. If $K_X+\Delta$ is additionally nef, then it is semiample.
\end{thm}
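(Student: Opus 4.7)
The plan is to reduce the three assertions to the semiampleness claim in the nef case, and then to attack that case through the Iitaka fibration together with the hypothesis that good models exist in lower dimensions. Granting the nef statement, observe first that by Birkar's termination argument \cite{Bir11}, the existence of good (hence minimal) models for klt pairs in dimensions at most $n-1$ implies that every $(K_X+\Delta)$-MMP with scaling of an ample divisor on a $\Q$-factorial klt pair of dimension $n$ with $K_X+\Delta$ pseudoeffective terminates. Since $\kappa(X,K_X+\Delta)\geq 1$ forces $K_X+\Delta$ to be pseudoeffective, such an MMP cannot end with a Mori fibre space, so it terminates with a $\Q$-factorial klt minimal model $(X',\Delta')$. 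Applying the nef statement to $(X',\Delta')$ then shows this minimal model is good, yielding both the existence of a good model for $(X,\Delta)$ and the assertion about MMP termination.

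For the semiampleness in the nef case, I would apply Lemma \ref{lem:iitaka} to obtain a resolution $\pi\colon Y\to X$ and an Iitaka fibration $f\colon Y\to Z$ with $\dim Z=\kappa(X,K_X+\Delta)\geq 1$, such that for a very general fibre $F$ and every $\pi$-exceptional $E\geq 0$ we have $\kappa\bigl(F,(\pi^*(K_X+\Delta)+E)|_F\bigr)=0$. Choosing $\Delta_Y$ and a $\pi$-exceptional $E_Y\geq 0$ so that $K_Y+\Delta_Y=\pi^*(K_X+\Delta)+E_Y$ and $(Y,\Delta_Y)$ is klt, adjunction produces a klt pair $(F,\Delta_F)$ of dimension at most $n-1$ with $\kappa(F,K_F+\Delta_F)=0$. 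By the inductive hypothesis, $(F,\Delta_F)$ admits a good model.

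At this point I would invoke the main result of Lai \cite{Lai11}: a $\Q$-factorial klt pair admitting a fibration to a positive-dimensional base whose very general fibre has a good model itself has a good model. Applied to $(Y,\Delta_Y)$ together with $f$, this produces a good model for $(Y,\Delta_Y)$, and hence for $(X,\Delta)$. When $K_X+\Delta$ is already nef, any two minimal models of $(X,\Delta)$ are connected by a sequence of flops, which preserve semiampleness of the log canonical divisor; thus $K_X+\Delta$ itself is semiample.

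The main obstacle is of course the appeal to Lai's theorem, which does the real work of transferring a good model from the generic fibre of the Iitaka fibration to the total space. Its proof rests on the canonical bundle formula of Fujino--Mori, special termination of flips, and delicate MMP arguments both on the Iitaka base $Z$ and on the total space $Y$; I would treat it as a black box rather than reprove it. A secondary subtle point is propagating semiampleness through flops, but this is a standard consequence of the length of flopping extremal rays being zero with respect to $K_X+\Delta$.
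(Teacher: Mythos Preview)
Your proposal is correct and follows essentially the paper's route: both outsource the real work to \cite{Lai11}. The paper simply cites \cite[Propositions 2.4, 2.5 and Theorem 4.4]{Lai11}, adding only the remark that Lai's Theorem 4.4 is stated for terminal varieties and carries over to klt pairs once one replaces \cite[Lemma 2.2]{Lai11} by \cite[Lemma 2.10]{HX13}; your explicit Iitaka-fibration setup and appeal to Lai's fibration theorem just unpack the first step of that same Theorem 4.4, while your Birkar reduction and flop argument together play the role of Lai's Proposition 2.4. Two small points: first, you should flag the terminal-versus-klt issue in Lai's statement, since you are applying it to a klt pair $(Y,\Delta_Y)$; second, what \cite{Bir11} gives directly is the existence of a minimal model (termination of \emph{some} MMP with scaling), whereas termination of \emph{every} MMP with scaling is exactly \cite[Proposition 2.4]{Lai11} and needs the existence of a \emph{good} model as input --- so logically your two paragraphs should be swapped, first producing the good model via Lai and only then deducing that every MMP with scaling terminates.
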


\begin{proof}
The result follows by combining \cite[Propositions 2.4 and 2.5, and Theorem 4.4]{Lai11}. Note that \cite[Theorem 4.4]{Lai11} is stated for a terminal variety $X$, but the proof generalises to the context of klt pairs by replacing \cite[Lemma 2.2]{Lai11} with \cite[Lemma 2.10]{HX13}.
\hfill $\Box$
\end{proof}

\subsection{Numerical Kodaira dimension}\label{subsec:numdim}
If $L$ is a pseudoeffective $\R$-Cartier $\R$-divisor on a projective variety $X$, we denote by $P_\sigma(L)$ and $N_\sigma(L)$ the $\R$-divisors forming the Nakayama-Zariski decomposition of $L$, see\ \cite[Chapter III]{Nak04} for the definition and the basic properties. Further, we recall the definition of the numerical Kodaira dimension \cite{Nak04,Kaw85}.

\begin{dfn}\label{dfn:kappa}{\rm
Let $X$ be a smooth projective variety and let $D$ be a pseudoeffective $\Q$-divisor on $X$. If we denote
$$\sigma(D,A)=\sup\big\{k\in\N\mid \liminf_{m\rightarrow\infty}h^0(X, \mathcal O_X(\lfloor ( mD\rfloor+A))/m^k >0\big\}$$
for a Cartier divisor $A$ on $X$, then the {\em numerical dimension\/} of $D$ is
$$\nu(X,D)=\sup\{\sigma(D,A)\mid A\textrm{ is ample}\}.$$
Note that this coincides with various other definitions of the numerical dimension by \cite{Leh13}. If $X$ is a projective variety and if $D$ is a pseudoeffective $\Q$-Cartier $\Q$-divisor on $X$, then we set $\nu(X,D)=\nu(Y,f^*D)$ for any birational morphism $f\colon Y\to X$ from a smooth projective variety $Y$. When the divisor $D$ is nef, then alternatively,
$$\nu(X,D)=\sup\{k\in\N\mid D^k\not\equiv0\}.$$}
\end{dfn}

We use often without explicit mention that the functions $\kappa$ and $\nu$ behave well under proper pullbacks: in other words, if $D$ is a $\Q$-Cartier $\Q$-divisor on a projective variety $X$, and if $f\colon Y\to X$ is a proper surjective morphism, then 
$$\kappa(X,D)=\kappa(Y,f^*D)\quad\text{and}\quad\nu(X,D)=\nu(Y,f^*D).$$ 
If $f$ is birational and $E$ is an effective $f$-exceptional divisor on $Y$, then 
$$\kappa(X,D)=\kappa(Y,f^*D+E)\quad\text{and}\quad\nu(X,D)=\nu(Y,f^*D+E).$$ 
For proofs, see \cite[Lemma II.3.11, Proposition V.2.7(4)]{Nak04}, \cite[Lemma 2.16]{GL13} and \cite[Theorem 6.7]{Leh13}.

We also need the following result \cite[Theorem 4.3]{GL13},  which -- after passing to a $\Q$-factorialisation -- holds without assuming $X$ to be $\Q$-factorial.  

\begin{lem}\label{lem:Kappa=KappaSigma}
Let $(X,\Delta)$ be a klt pair. Then $(X,\Delta)$ has a good model if and only if $\kappa(X,K_X+\Delta)=\nu(X,K_X+\Delta)$.
\end{lem}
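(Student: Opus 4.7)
The plan is to prove the two directions separately. The forward direction is routine: if $(X,\Delta)$ admits a good model $\phi\colon X\dashrightarrow X'$, then $D':=K_{X'}+\phi_*\Delta$ is semiample, and for any semiample $\Q$-divisor one has $\kappa = \nu$ (both agree with the dimension of the image of the morphism defined by a sufficiently divisible multiple). To transfer this equality back to $X$, I would use the common resolution $(p,q)\colon W\to X\times X'$ from the definition of a log terminal model, which satisfies $p^*(K_X+\Delta)=q^*D'+E$ with $E\ge 0$ being $q$-exceptional; the pullback invariance of $\kappa$ and $\nu$ and their insensitivity to the addition of effective exceptional divisors, both recalled in Subsection \ref{subsec:numdim}, then propagate the equality from $X'$ back to $X$.

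For the converse, set $L:=K_X+\Delta$ and assume $\kappa:=\kappa(X,L)=\nu(X,L)$. I would split into cases according to the value of $\kappa$. If $\kappa=0$, then $\nu(X,L)=0$ forces $P_\sigma(L)\equiv 0$ via the Nakayama-Zariski decomposition, so $L\equiv N_\sigma(L)$; combined with $\kappa(X,L)=0$, which pins down $L$ $\Q$-linearly as a unique effective divisor, one deduces $L\sim_\Q N_\sigma(L)$, and Nakayama's abundance theorem for klt pairs of numerical dimension zero (\cite{Nak04}) produces a good model. If $\kappa=\dim X$, then $L$ is big and a good model is furnished directly by \cite{BCHM}.

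The substantive case is $1\le\kappa<\dim X$. Here I would invoke Lemma \ref{lem:iitaka} to obtain a resolution $\pi\colon Y\to X$ and an Iitaka fibration $f\colon Y\to Z$ with $\dim Z=\kappa$, and then aim to show that $\pi^*L$, up to $\Q$-equivalence and an effective $\pi$-exceptional divisor, descends to a big $\Q$-divisor on $Z$; coupled with the canonical bundle formula this would yield a good model for $(X,\Delta)$. The strategy is to restrict to a very general fibre $F$ of $f$: by Lemma \ref{lem:iitaka}, for any $\pi$-exceptional $E\ge 0$ one has $\kappa\bigl((\pi^*L+E)|_F\bigr)=0$, and I would argue that the hypothesis $\nu(X,L)=\kappa=\dim Z$ forces $\nu\bigl((\pi^*L+E)|_F\bigr)=0$ as well, so that the $\kappa=0$ case applies fibrewise.

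The main obstacle is precisely this last point: verifying that the global equality $\nu=\kappa$ on $X$ really cuts $\nu$ down to $0$ on a very general fibre of the Iitaka fibration. This requires a careful analysis of how the Nakayama-Zariski decomposition interacts with the horizontal/vertical splitting of divisors relative to $f$, together with the semicontinuity properties of $\nu$ in families, and is the one step in the argument where the hypothesis $\nu=\kappa$ is used in an essential, non-formal way.
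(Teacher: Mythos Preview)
The paper does not give a proof of this lemma; it simply records it as \cite[Theorem 4.3]{GL13}. Your forward direction and the extreme cases $\kappa=0$ and $\kappa=\dim X$ of the converse are fine and match the standard arguments.

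In the intermediate case $1\le\kappa<\dim X$, your outline has the right shape but contains a genuine gap beyond the obstacle you already flag. The step ``$\nu(X,L)=\dim Z$ forces $\nu\big((\pi^*L+E)|_F\big)=0$ on a very general fibre'' is indeed the crux, and it is handled by Nakayama's additivity-type results for $\kappa_\sigma$ (see \cite[Chapter~V, Proposition~2.7]{Nak04}); you should cite this rather than leave it as an open obstacle. The more serious issue is the sentence ``aim to show that $\pi^*L$ \ldots\ descends to a big $\Q$-divisor on $Z$; coupled with the canonical bundle formula this would yield a good model''. This descent does \emph{not} hold on $Y$ itself: even when $\nu|_F=0$, the restriction $\pi^*L|_F$ is only numerically trivial after passing to a minimal model of the fibre, not on the original fibre. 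What is actually needed is the reduction theorem (Lai \cite{Lai11}, Hacon--Xu \cite[Theorem~2.12]{HX13}, or \cite{GL13} itself) that a klt pair has a good model as soon as the restriction to a very general fibre of its Iitaka fibration does; this is proved by running a relative $(K+\Delta)$-MMP over $Z$ and using termination on the generic fibre, and only \emph{after} that MMP does the canonical bundle formula apply. Replacing your descent-plus-canonical-bundle-formula step by this reduction, together with the $\nu=0$ abundance on the fibre, gives a correct proof along the lines of \cite{GL13}.
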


The proof of the following simple lemma is analogous to that of \cite[Lemma 3.1]{DL15}.

\begin{lem}\label{lem:pushforward}
Let $\pi\colon X\dashrightarrow Y$ be a birational contraction between projective varieties. Let $D$ be a $\Q$-Cartier $\Q$-divisor on $X$ such that $\pi_*D$ is $\Q$-Cartier. Then $\kappa(Y,\pi_*D)\geq\kappa(X,D)$ and $\nu(Y,\pi_*D)\geq\nu(X,D)$.
\end{lem}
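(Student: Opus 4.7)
The plan is to pass to a common resolution of $\pi$ and compare pullbacks there. Let $(p,q)\colon W\to X\times Y$ be a resolution of the indeterminacy of $\pi$, with $p\colon W\to X$ and $q\colon W\to Y$ projective birational morphisms from a smooth variety $W$. By the invariance of $\kappa$ and $\nu$ under proper pullback recalled in Subsection~\ref{subsec:numdim}, it suffices to show that $\kappa(W,p^*D)\leq\kappa(W,q^*\pi_*D)$ and $\nu(W,p^*D)\leq\nu(W,q^*\pi_*D)$.

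The central point is that the $\Q$-divisor $G:=p^*D-q^*\pi_*D$ is supported on $q$-exceptional prime divisors. On the open set $U\subseteq W$ where both $p$ and $q$ are isomorphisms, $\pi$ restricts to an isomorphism between $p(U)$ and $q(U)$ sending $D|_{p(U)}$ to $\pi_*D|_{q(U)}$, so $\Supp G$ is contained in the union of the $p$-exceptional and $q$-exceptional loci. Since $\pi$ is a birational contraction, $\pi^{-1}$ extracts no divisors; equivalently, every $p$-exceptional prime divisor on $W$ is $q$-exceptional. Hence $\Supp G$ lies in the $q$-exceptional locus, and writing $G=E_+-E_-$ with $E_+,E_-\geq 0$ of disjoint support, both $E_\pm$ are effective and $q$-exceptional.

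For the $\kappa$ inequality I would combine the sheaf inclusion $\OO_W(mq^*\pi_*D+mE_+-mE_-)\hookrightarrow\OO_W(mq^*\pi_*D+mE_+)$ with the projection formula, which gives $q_*\OO_W(mq^*\pi_*D+mE_+)=\OO_Y(m\pi_*D)$ since $E_+$ is effective and $q$-exceptional, to obtain for every sufficiently divisible $m$
\[
h^0(X,mD)=h^0(W,mp^*D)\leq h^0(W,mq^*\pi_*D+mE_+)=h^0(Y,m\pi_*D).
\]
For $\nu$, I would use the elementary monotonicity $\nu(L)\leq\nu(L+E_-)$ for any effective $E_-$ (obtained from the same sheaf inclusion after twisting by a fixed ample divisor, as in Definition~\ref{dfn:kappa}), together with the identity $\nu(W,q^*\pi_*D+E_+)=\nu(Y,\pi_*D)$ for the effective $q$-exceptional divisor $E_+$ recalled in Subsection~\ref{subsec:numdim}, which yields
\[
\nu(X,D)=\nu(W,q^*\pi_*D+E_+-E_-)\leq\nu(W,q^*\pi_*D+E_+)=\nu(Y,\pi_*D).
\]
The only nontrivial step is the $q$-exceptionality of $G$, which is precisely the content of the birational contraction hypothesis on $\pi$; everything else is routine bookkeeping.
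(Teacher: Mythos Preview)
Your argument is correct and follows essentially the same approach as the paper: pass to a common resolution $W$, observe that $p^*D-q^*\pi_*D$ is supported on $q$-exceptional divisors because a birational contraction extracts no divisors, and then use the invariance of $\kappa$ and $\nu$ under adding effective exceptional divisors. The paper carries out the same idea but with heavier notation, decomposing both $p^*D$ and $q^*D'$ separately into strict transforms and exceptional parts $E_p^\pm$, $E_q^\pm$, $E^\pm$ before reassembling; your direct decomposition $G=E_+-E_-$ is a cleaner packaging of the same computation.
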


\begin{proof}
By passing to multiples, we may assume that both $D$ and $D'=\pi_*D$ are Cartier. Let $p\colon W \to X$ and $q\colon W \to Y$ be resolutions such that $q = \pi \circ p$. Write
$$p^*D=p_*^{-1}D+E_p^+-E_p^-\quad\text{and}\quad q^*D'=q_*^{-1}D'+E_q^+-E_q^-,$$
where $E_p^+,E_p^-\geq0$ is $p$-exceptional and without common components and $E_q^+,E_q^-\geq0$ is $q$-exceptional and without common components. Then there are $q$-exceptional divisors $E^+,E^-\geq0$ without common components such that $E^--E^+=p_*^{-1}D-q_*^{-1}D'$, and since $\pi$ is a contraction, $E_p^+$ and $E_p^-$ are $q$-exceptional. Therefore,
\begin{align*}
\kappa(Y,D')&=\kappa(W,q^*D'+E_p^+ +E_q^-+E^-)=\kappa(W,p_*^{-1}D+E_p^+ +E_q^++E^+)\\
& \geq\kappa(W,p^*D+E_q^+ +E^+)\geq\kappa(W,p^*D)=\kappa(X,D),
\end{align*}
which was to be proved. The second inequality is analogous.
\hfill $\Box$
\end{proof}

\subsection{Torsion-free and reflexive sheaves}

A coherent sheaf $\mathcal F$ on an algebraic variety $X$ is \emph{reflexive} if the natural homomorphism from $\mathcal F$ to its double dual $\mathcal F^{**}$ is an isomorphism. In particular, a reflexive sheaf $\mathcal F$ is torsion-free. If $X$ is locally factorial, then a reflexive sheaf of rank $1$ is a line bundle \cite[Proposition 1.9]{Har80}. If $X$ is smooth, then a reflexive sheaf is locally free away from a codimension $3$ subset of $X$ \cite[Corollary 1.4]{Har80}, and a torsion-free sheaf is locally free away from a codimension $2$ subset of $X$ \cite[Corollary on p.\ 148]{OSS80}.

Let $\mathcal F$ be a coherent sheaf which is a subsheaf of a locally free sheaf $\mathcal E$. The \emph{saturation} of $\mathcal F$ in $\mathcal E$ is the largest sheaf $\mathcal F'\subseteq\mathcal E$ such that $\mathcal F\subseteq\mathcal F'$, the ranks of $\mathcal F$ and $\mathcal F'$ are the same, and the quotient $\mathcal E/\mathcal F'$ is torsion-free. The saturation $\mathcal F'$ always exists and is a reflexive sheaf, see \cite[Lemma 1.1.16]{OSS80}.

If $\mathcal F$ is a torsion-free coherent sheaf of rank $r$ on a smooth variety $X$, then the \emph{determinant} of $\mathcal F$ is by definition $\det\mathcal F=\big(\bigwedge^r\mathcal F\big)^{**}$. If $\mathcal F$ is reflexive, then this definition coincides with that in \cite[p.\ 129]{Har80}.

Let $\mathcal E$ be a coherent sheaf on an algebraic variety $X$. 
We say that a sheaf $\mathcal F\subseteq\mathcal E$ is \emph{the subsheaf of $\mathcal E$ generated by global sections of $\mathcal E$}
if $\mathcal F$ is the image of the evaluation map
$$ H^0(X,\mathcal E)\otimes\OO_X \to \mathcal E.$$ 

%Each nontrivial global section $\sigma\in H^0(X,\mathcal E)$ gives a nontrivial morphism $f_\sigma\colon\OO_X\to\mathcal E$. We say that a sheaf $\mathcal F\subseteq\mathcal E$ is \emph{the subsheaf of $\mathcal E$ generated by global sections of $\mathcal E$} %if it is the image of the morphism $\bigoplus_{\sigma\in H^0(X,\mathcal E)}f_\sigma$. 
%
%\medskip

The next proposition seems to be folklore, but we include the proof for the benefit of the reader.

\begin{pro} \label{pro:wedge}
Let $X$ be a smooth variety and let $\mathcal F$ be a globally ge\-ne\-ra\-ted torsion-free sheaf on $X$ of rank $r$. If $h^0(X,\mathcal F) \geq r+1$, then 
$$h^0(X, \det \mathcal F) \geq 2.$$
\end{pro}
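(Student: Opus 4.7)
The plan is to exploit global generation to produce a classifying morphism to a Grassmannian, then to pull back sections of the very ample Plücker bundle. Since $\mathcal F$ is torsion free on the smooth variety $X$, by the cited result of Okonek--Schneider--Spindler there is an open subset $U\subseteq X$ with $\codim_X(X\setminus U)\geq 2$ on which $\mathcal F|_U$ is locally free of rank $r$. Setting $V=H^0(X,\mathcal F)$, global generation restricts to a surjection $V\otimes\OO_U\twoheadrightarrow\mathcal F|_U$ on $U$; this surjection is classified by a morphism $\phi\colon U\to\Gr(r,V)$ into the Grassmannian of $r$-dimensional quotients of $V$, and $\mathcal F|_U\simeq\phi^*\mathcal Q$ where $\mathcal Q$ is the universal quotient bundle. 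Taking top exterior powers gives $\det\mathcal F|_U\simeq\phi^*\OO_\Gr(1)$, where $\OO_\Gr(1)=\det\mathcal Q$ is the very ample Plücker bundle.

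Since $X$ is smooth, hence locally factorial, the reflexive rank-$1$ sheaf $\det\mathcal F$ is a genuine line bundle on all of $X$, so Hartogs extension gives $H^0(X,\det\mathcal F)=H^0(U,\det\mathcal F|_U)$. Pulling back along $\phi$ thus yields a linear map
$$\phi^*\colon H^0(\Gr,\OO_\Gr(1))\to H^0(X,\det\mathcal F).$$
Once I know that $\phi$ is nonconstant, the very ampleness of $\OO_\Gr(1)$ immediately gives $h^0(X,\det\mathcal F)\geq 2$: choosing $u_1,u_2\in U$ with $\phi(u_1)\neq\phi(u_2)$ and selecting sections $s_1,s_2\in H^0(\Gr,\OO_\Gr(1))$ whose values separate these two points produces linearly independent pullbacks $\phi^*s_1,\phi^*s_2\in H^0(X,\det\mathcal F)$.

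The main obstacle is therefore to rule out the case that $\phi$ is constant. If it were, then $\mathcal F|_U\simeq\OO_U^{\oplus r}$ would be trivial, and Hartogs on the smooth projective $X$ would yield $h^0(U,\mathcal F|_U)=r\cdot h^0(X,\OO_X)=r$. On the other hand, because $\mathcal F$ is torsion free, any local section with support contained in the codimension $\geq 2$ subset $X\setminus U$ must vanish, so the restriction $H^0(X,\mathcal F)\hookrightarrow H^0(U,\mathcal F|_U)$ is injective. Combined with the hypothesis $h^0(X,\mathcal F)\geq r+1$, this forces $r+1\leq r$, a contradiction. Hence $\phi$ is nonconstant and the proof is complete.
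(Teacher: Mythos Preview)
Your proof is correct and takes a genuinely different, more geometric route than the paper. The paper argues directly by contradiction: assuming $h^0(X,\det\mathcal F)\leq 1$, it picks sections $\omega_1,\dots,\omega_r$ independent at the generic point, observes that $\omega_1\wedge\dots\wedge\omega_r$ is a nonzero section of $\det\mathcal F|_{X^\circ}$, deduces from global generation that $\det\mathcal F|_{X^\circ}\simeq\OO_{X^\circ}$, and hence that this wedge is nowhere vanishing, forcing $\mathcal F|_{X^\circ}\simeq\OO_{X^\circ}^{\oplus r}$ and the same contradiction with $h^0(X,\mathcal F)\geq r+1$. Your argument packages the same dichotomy---either $\mathcal F|_U$ is trivial or $\det\mathcal F$ has at least two sections---into the classifying map to the Grassmannian, replacing the explicit wedge computation by the very ampleness of the Pl\"ucker bundle. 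The paper's approach is slightly more elementary in that it avoids invoking the Grassmannian and its universal property, while yours makes the underlying geometry (the map to $\Gr(r,V)$ being nonconstant exactly when $\det\mathcal F$ moves) more transparent. One small remark: you use $h^0(X,\OO_X)=1$, which is implicit in the paper's standing convention that all varieties are projective and irreducible; this is also used tacitly in the paper's own proof.
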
 

\begin{proof}  
Arguing by contradiction, assume that $h^0(X, \det \mathcal F) \leq 1$. Let $X^\circ$ be the largest open subset where $\mathcal F$ is locally free. Then $\codim_X(X\setminus X^\circ)\geq2$, which implies that $h^0(X^\circ, \det \mathcal F|_{X^\circ}) \leq 1$ by \cite[Proposition 1.6]{Har80}. Moreover, since $\mathcal F$ is torsion-free, the restriction map
\begin{equation} \label{eq:restriction}
H^0(X,\mathcal F) \to H^0(X^{\circ} ,\mathcal F \vert_{X^\circ})
\end{equation}
is injective. Let $\eta$ be the generic point of $X$. Our assumption and \eqref{eq:restriction} imply that there are $\omega_1,\dots,\omega_r\in H^0(X^\circ,\mathcal F|_{X^\circ})$ such that $(\omega_1)_\eta,\dots,(\omega_r)_\eta$ are linearly independent in $\OO_{X,\eta}$. Therefore, $\omega_1\wedge\ldots\wedge\omega_r$ defines a non-zero global section of $\det\mathcal F|_{X^\circ}$, thus $h^0(X^\circ,\det\mathcal F|_{X^\circ})=1$. Since $\mathcal F$ is generated by global sections, so is $\det \mathcal F \vert_{X^\circ}$, hence 
$$\det \mathcal F \vert _{X^\circ} \simeq \mathcal O_{X^{\circ}}.$$
But then $\omega_1 \wedge \ldots \wedge \omega_r\in H^0(X^\circ,\det\mathcal F|_{X^\circ})\simeq\C$ is constant on $X^\circ$, and so the sections $\omega_1,\dots,\omega_r$ are linearly independent at every point of $X^\circ$. Therefore, the induced map $\OO_{X^\circ}^{\oplus r}\to\mathcal F|_{X^\circ}$ is an injective vector bundle morphism, hence surjective as the rank of $\mathcal F|_{X^\circ}$ is $r$. In particular, $h^0(X^{\circ} ,\mathcal F \vert_{X^\circ})=r$, which contradicts our assumption and \eqref{eq:restriction}.
\hfill $\Box$
\end{proof} 

The following result is essential for the analysis of subsheaves of sheaves of log differentials. For $\Delta=0$, this is \cite[Theorem 0.1]{CP11}, and in general, this follows from \cite[Theorem 1.2]{CP15} combined with \cite[Theorem 0.2]{BDPP}.

\begin{thm}\label{thm:CP11}
Let $(X,\Delta)$ be a log smooth projective pair, where $\Delta$ is a reduced divisor. Let $\Omega_X^1(\log\Delta)^{\otimes m}\to\mathcal Q$ be a torsion-free coherent quotient for some $m\geq1$. If $K_X+\Delta$ is pseudoeffective, then $c_1(\mathcal Q)$ is pseudoeffective.
\end{thm}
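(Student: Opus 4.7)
The plan is to deduce the statement directly from the two inputs flagged in the hint: the duality between the pseudoeffective and movable cones \cite[Theorem 0.2]{BDPP}, and the logarithmic positivity theorem of Campana--Păun \cite[Theorem 1.2]{CP15}, which extends \cite[Theorem 0.1]{CP11} to the log setting and to arbitrary tensor powers. Since both ingredients are available, the proof will be short; the work is in arranging them in the right order.

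First I would translate the hypothesis that $K_X+\Delta$ is pseudoeffective into a numerical statement. By \cite[Theorem 0.2]{BDPP}, the cone of pseudoeffective $\R$-divisors on $X$ is dual to the cone of movable classes. Hence pseudoeffectivity of $K_X+\Delta$ is equivalent to the inequality
$$(K_X+\Delta)\cdot\alpha\,\geq\,0\quad\text{for every movable class }\alpha\text{ on }X.$$
This is the precise input required to feed into Campana--Păun.

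Next I would apply \cite[Theorem 1.2]{CP15}. That theorem asserts that for a log smooth projective pair $(X,\Delta)$ with $\Delta$ reduced, any $m\geq1$, any torsion-free coherent quotient $\Omega_X^1(\log\Delta)^{\otimes m}\to\mathcal Q$, and any movable class $\alpha$ with $(K_X+\Delta)\cdot\alpha\geq 0$, one has $c_1(\mathcal Q)\cdot\alpha\geq 0$. Combined with the previous step, this gives $c_1(\mathcal Q)\cdot\alpha\geq 0$ for every movable class $\alpha$. A second application of BDPP duality then yields that $c_1(\mathcal Q)$ is pseudoeffective, which is the desired conclusion.

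The only point where care is needed — and what I would identify as the main (minor) obstacle — is making sure the cited form of the Campana--Păun theorem is exactly the one covering arbitrary tensor powers of $\Omega_X^1(\log\Delta)$, rather than only quotients of $\Omega_X^1(\log\Delta)$ itself or of its symmetric or reflexive powers. This is built into the statement of \cite[Theorem 1.2]{CP15}, so no additional saturation or reduction argument is required; the proof reduces to stringing together the two cited results.
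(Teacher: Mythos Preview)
Your proposal is correct and matches the paper's approach exactly: the paper does not give a detailed proof but simply states that the result follows from \cite[Theorem 1.2]{CP15} combined with \cite[Theorem 0.2]{BDPP}, which is precisely the duality-then-Campana--P\u{a}un argument you have written out.
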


\subsection{Metrics on line bundles}\label{subsec:metric}

Let $X$ be a normal projective variety and let $D$ be a $\Q$-Cartier divisor. Following \cite{DPS01} and \cite{Dem01}, we say that $D$, or $\mathcal O_X(D)$, has a metric with \emph{analytic singularities} and semipositive curvature current, if there exists a positive integer $m$ such that $mD$ is Cartier and if there exists a resolution of singularities $\pi\colon Y \to X$ such that the line bundle $\pi^*\OO_X(mD)$ has a singular metric $h$ whose curvature current is semipositive (as a current), and the local plurisubharmonic weights $\varphi$ of $h$ are of the form
$$ \varphi = \sum \lambda_j \log \vert g_j \vert + O(1),$$
where $\lambda_j$ are positive real numbers, $O(1)$ is a bounded term, and the divisors $D_j$ defined locally by $g_j$ form a simple normal crossing divisor on $Y$. We then have
$$\textstyle\mathcal I(h^{\otimes m})=\OO_Y\big(-\sum\lfloor m\lambda_j\rfloor D_j\big)\quad\text{for every positive integer }m,$$
where $\mathcal I(h^{\otimes m})$ is the multiplier ideal associated to $h^{\otimes m}$. If all $\lambda_j$ are rational, then $h$ has \emph{algebraic singularities}. Further, $\mathcal O_X(D)$  is \emph{hermitian semipositive} if $\pi^*\big(\OO_X(mD)\big)$ has a smooth hermitian metric $h$ whose curvature $\Theta_h(D)$ is semipositive. We mostly use these notions when $D = K_X$. 

\medskip

The following result \cite[Theorem 0.1]{DPS01} is a generalisation of the hard Lefschetz theorem. 

\begin{thm}\label{thm:DPS}
Let $X$ be a compact K\"ahler manifold of dimension $n$ with a K\"ahler form $\omega$. Let $\mathcal L$ be a pseudoeffective line bundle on $X$ with a singular hermitian metric $h$ such that $\Theta_h(\mathcal L) \geq 0$. Then for every nonnegative integer $q$ the morphism
\[
\xymatrix{ 
H^0\big(X,\Omega^{n-q}_X\otimes\mathcal  L\otimes\mathcal I(h)\big) \ar[r]^{\omega^q\wedge\bullet} & H^q\big(X, \Omega^n_X\otimes \mathcal L\otimes\mathcal I(h)\big)
}
\]
is surjective.
\end{thm}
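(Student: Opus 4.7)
The plan is to realise both cohomology groups as spaces of $L^{2}$-harmonic forms with respect to the singular metric $h$, and then deduce surjectivity from the classical $\mathfrak{sl}_{2}$-representation on the graded algebra of harmonic forms. First, I would identify $H^{q}(X,\Omega_{X}^{n}\otimes\mathcal L\otimes\mathcal I(h))$ with the space $\mathcal H^{n,q}_{(2)}(X,\mathcal L,h)$ of $L^{2}$-harmonic $\mathcal L$-valued $(n,q)$-forms. The input is Nadel's multiplier-ideal resolution: the sheaf $\mathcal I(h)$ is precisely the sheaf of germs locally square-integrable with respect to $h$, and on a K\"ahler manifold the Dolbeault complex of $\mathcal L$-valued forms that are $L^{2}$ with respect to $h$ is a fine resolution of $\Omega_{X}^{n}\otimes\mathcal L\otimes\mathcal I(h)$. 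Hence cohomology is computed by the $L^{2}$-$\bar\partial$ complex, and provided the range of $\bar\partial$ is closed in the appropriate Hilbert spaces, each class admits a unique harmonic representative. On the other side, $H^{0}(X,\Omega_{X}^{n-q}\otimes\mathcal L\otimes\mathcal I(h))$ is simply the space of holomorphic $(n-q,0)$-forms with values in $\mathcal L$ that are globally $L^{2}$, and these are automatically $L^{2}$-harmonic because $\bar\partial^{\ast}$ vanishes on $(p,0)$-forms for trivial reasons.

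With both groups interpreted as spaces of harmonic forms, the core of the argument is the Lefschetz decomposition. The K\"ahler identities $[\Lambda,\bar\partial]=-i\partial^{\ast}$ in their $\mathcal L$-valued refinement, combined with the Bochner-Kodaira-Nakano formula and the hypothesis $\Theta_{h}(\mathcal L)\geq 0$, show that $L=\omega\wedge\bullet$ preserves harmonicity in the directions relevant here: applying $L^{q}$ to a holomorphic $L^{2}$-section of $\Omega_{X}^{n-q}\otimes\mathcal L$ yields a harmonic $(n,q)$-form, because the curvature term in the Bochner identity is nonnegative and pairs trivially with forms of top holomorphic degree. The commutation relation $[L,\Lambda]=(n-p-q)\,\mathrm{id}$ on $(p,q)$-forms then provides an $\mathfrak{sl}_{2}$-action on $\bigoplus_{p,q}\mathcal H^{p,q}_{(2)}$, and standard representation theory gives that $L^{q}\colon\mathcal H^{n-q,0}_{(2)}\to\mathcal H^{n,q}_{(2)}$ is an isomorphism, since every $(n,q)$-harmonic form sits at the extreme end of its $\mathfrak{sl}_{2}$-string. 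Tracing the identifications back then yields the desired surjectivity.

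The main obstacle is to justify $L^{2}$-Hodge theory with a genuinely singular metric. In that setting the formal K\"ahler identities hold only on dense subspaces of the Hilbert spaces in question, and one must verify both that the image of $\bar\partial$ is closed (so harmonic representatives exist) and that $\Lambda$ is well defined on $L^{2}$-harmonic forms. The natural route is Demailly's regularisation: approximate $h$ by a decreasing sequence of metrics $h_{\varepsilon}$ with smooth weights, almost semipositive curvature $\Theta_{h_{\varepsilon}}(\mathcal L)\geq-\varepsilon\,\omega$, and $\mathcal I(h_{\varepsilon})\to\mathcal I(h)$. One proves the hard Lefschetz statement for each $h_{\varepsilon}$ by the smooth Bochner argument, solves the auxiliary $\bar\partial$-equations with $L^{2}$-bounds coming from H\"ormander's theorem that are uniform in $\varepsilon$, and extracts a weak limit. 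The delicate point is to show that the limit lies in $H^{0}(X,\Omega_{X}^{n-q}\otimes\mathcal L\otimes\mathcal I(h))$ rather than in a strictly larger $L^{2}$-completion; this is precisely where the global semipositivity $\Theta_{h}(\mathcal L)\geq 0$ provides the a priori mass bound needed to control the lifts as $\varepsilon\to 0$.
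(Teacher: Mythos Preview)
The paper does not prove this theorem at all: it is quoted verbatim as \cite[Theorem 0.1]{DPS01} and used as a black box, so there is no ``paper's own proof'' to compare against. Your sketch is a reasonable outline of the actual argument in \cite{DPS01}, which indeed proceeds via $L^2$-Hodge theory, the Bochner--Kodaira--Nakano identity, and Demailly's regularisation of the singular metric $h$ by almost-psh metrics $h_\varepsilon$ with analytic singularities, followed by a weak-limit extraction. One refinement worth noting: in the genuine singular case the $\mathfrak{sl}_2$-action does not quite survive as cleanly as you suggest, and the DPS argument works instead by solving $\bar\partial$-equations with uniform $L^2$-estimates at each approximation level and then passing to the limit, rather than by invoking a Lefschetz decomposition directly on the limiting harmonic spaces; your last paragraph correctly identifies this as the delicate step.
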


We also need the following result \cite[Lemma 3.6]{LOP16}.

\begin{lem}\label{lem:33}
Let $X$ be a projective manifold and let $L$ be a pseudoeffective Cartier divisor on $X$. Let $h$ be a singular hermitian metric on $\OO_X(L)$ with semipositive curvature current  and multiplier ideal sheaf $\mathcal I(h)$. Let $D$ be an effective Cartier divisor such that $\mathcal I(h)\subseteq\OO_X(-D)$. Then $L-D$ is pseudoeffective.
\end{lem}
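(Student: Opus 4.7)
The plan is to realise $c_1(L-D)$ as the cohomology class of a closed positive current, which will immediately yield the pseudoeffectivity of $L-D$. The natural candidate is the difference $\Theta_h(L)-[D]$, and the content of the lemma is that the hypothesis $\mathcal I(h)\subseteq\OO_X(-D)$ forces the generic Lelong numbers of $\Theta_h(L)$ along the prime components of $D$ to dominate the corresponding multiplicities, making this difference positive as a current.

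I would first invoke Siu's decomposition of the closed positive current $T:=\Theta_h(L)$:
$$T=R+\sum_{j}\nu_{j}[D_{j}'],$$
where the sum ranges over the prime divisors of $X$, $\nu_{j}=\nu(T,D_{j}')\geq 0$ is the generic Lelong number, and $R$ is a closed positive current with vanishing generic Lelong number along every prime divisor. Passing to cohomology classes, $\{R\}=c_1(L)-\sum_j\nu_j\{D_j'\}$ is pseudoeffective. Writing $D=\sum_i a_iD_i$ and setting $a_j=0$ when $D_j'$ is not one of the $D_i$, the identity
$$L-D=\Big(L-\sum_j\nu_j D_j'\Big)+\sum_j(\nu_j-a_j)D_j'$$
then exhibits $L-D$ as the sum of a pseudoeffective class and an effective $\R$-divisor, provided that one has the key inequality $\nu_i\geq a_i$ for every prime component $D_i$ of $D$.

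The remaining step is the verification of this inequality, and here I would appeal to Skoda's integrability theorem at a general point $x_0$ of $D_i$. After shrinking $X$ around $x_0$, we may assume that $D_i$ is the only component of $D$ meeting $x_0$, that $D_i=(z_1=0)$ in local coordinates, and that the local plurisubharmonic weight $\varphi$ of $h$ satisfies $\nu(\varphi,x_0)=\nu_i$. Setting $b:=\lfloor\nu_i\rfloor$, the characterisation of the Lelong number shows that $\tilde\varphi:=\varphi-b\log|z_1|$ is plurisubharmonic near $x_0$ with $\nu(\tilde\varphi,x_0)=\nu_i-b<1$. Skoda's theorem then gives $e^{-2\tilde\varphi}\in L^1_{\mathrm{loc}}$ at $x_0$, which is exactly the statement $z_1^{b}\in\mathcal I(h)_{x_0}$. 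Combined with $\mathcal I(h)\subseteq\OO_X(-D)$ and $\OO_X(-D)_{x_0}=(z_1^{a_i})$, this forces $b\geq a_i$, whence $\nu_i\geq b\geq a_i$. The technical core of the argument is precisely this passage from a containment of multiplier ideals to a Lelong-number inequality; handling it cleanly in the ambient $n$-dimensional neighbourhood rather than on a transversal disk is the step I expect to be the main obstacle, and it is exactly what Skoda's integrability theorem is designed to overcome.
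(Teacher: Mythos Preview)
The paper does not give a proof of this lemma; it simply records it as \cite[Lemma 3.6]{LOP16}. So there is no in-paper argument to compare against.

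Your proposal is correct. The two ingredients---Siu's decomposition of the closed positive current $T=\Theta_h(L)$ and Skoda's integrability criterion---combine exactly as you describe. A couple of minor remarks on presentation. First, the plurisubharmonicity of $\tilde\varphi=\varphi-b\log|z_1|$ is not really a ``characterisation of the Lelong number'' statement; it is a direct consequence of Siu's theorem, which asserts that $T-\nu_i[D_i]$ is a closed positive current, so locally $\varphi-\nu_i\log|z_1|$ differs from a psh function by a pluriharmonic one, and then $b\le\nu_i$ gives the claim for $\tilde\varphi$. Second, when you pass to cohomology you only need finitely many components (those of $D$), so there is no convergence issue: $T-\sum_{i=1}^r\nu_i[D_i]$ is already a closed positive current by Siu, its class is pseudoeffective, and adding the effective $\R$-divisor $\sum_i(\nu_i-a_i)D_i$ finishes the argument. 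With these clarifications the proof is complete and is the standard route to this statement.
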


\section{MMP with a twist}\label{sec:MMPtwist}

In the Minimal Model Program, starting from a klt pair $(X,\Delta)$ with $K_X+\Delta$ pseudoeffective, one wants to produce a good model for $(X,\Delta)$. It is often difficult even to start the construction due to lack of sections of $K_X+\Delta$. However, if we are in a favourable situation that arbitrarily good approximations of $K_X+\Delta$ have (many) sections, then one can conclude the same for $K_X+\Delta$ itself. That is the content of this section.

We start with the case when $K_X+\Delta$ is nef. It turns out that in this case one can control the growth of sections of $K_X+\Delta$ precisely, and similar techniques will be used in Section \ref{sec:CY}. 

\begin{thm} \label{thm:MMPtwist}
Assume the existence of good models for klt pairs in dimensions at most $n-1$. Let $(X,\Delta)$ be a $\Q$-factorial projective klt pair of dimension $n$ such that $K_X+\Delta$ is nef. Assume that there exist a pseudoeffective $\Q$-divisor $F$ on $X$ and an infinite subset $\mathcal S\subseteq\N$ such that 
\begin{equation}\label{eq:rel2a}
N_m+F\sim_\Q m(K_X+\Delta)
\end{equation}
for all $m\in\mathcal S$, where $N_m\geq0$ are integral Weil divisors. Then 
$$\kappa(X,K_X+\Delta)=\max\{\kappa(X,N_m)\mid m\in\mathcal S\}\geq0.$$
\end{thm}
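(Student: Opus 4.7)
My plan is to combine the $\varepsilon$-trick with Theorem \ref{thm:lai}. For $k \in \mathcal S$ fixed and $m \in \mathcal S$ with $m > k$, eliminating $F$ from \eqref{eq:rel2a} applied to $k$ and $m$ yields the identity
\[
K_X + \Delta + \tfrac{1}{m-k}\,N_k \;\sim_\Q\; \tfrac{1}{m-k}\,N_m.
\]
Setting $\varepsilon = 1/(m-k)$, the pair $(X, \Delta + \varepsilon N_k)$ is klt for $m-k$ large, and $K_X + \Delta + \varepsilon N_k$ is $\Q$-linearly equivalent to an effective $\Q$-divisor, so $\kappa(X, K_X + \Delta + \varepsilon N_k) = \kappa(X, N_m)$. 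Adding the effective $\varepsilon N_k$ to $K_X + \Delta$ cannot decrease the Kodaira dimension, and hence $\kappa(X, N_m) \geq \kappa(X, K_X + \Delta)$ for all such $m$, which already gives $\kappa(X, K_X + \Delta) \leq \max_{m' \in \mathcal S} \kappa(X, N_{m'})$.

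For the reverse inequality I pick $m$ attaining the maximum, and under the assumption that the maximum is at least $1$, apply Theorem \ref{thm:lai} to $(X, \Delta + \varepsilon N_k)$: a $(K_X + \Delta + \varepsilon N_k)$-MMP with scaling of an ample divisor terminates at a good model $(X', \Delta' + \varepsilon N_k')$, yielding a morphism $\phi\colon X' \to Y'$ with $K_{X'} + \Delta' + \varepsilon N_k' \sim_\Q \phi^* H$ for some ample $H$ on $Y'$ and $\dim Y' = \kappa(X, N_m)$. For $\varepsilon$ sufficiently small, each extremal ray $R$ contracted by the MMP must be $(K_X + \Delta)$-trivial: otherwise the nefness of $K_X + \Delta$ forces $(K_X + \Delta + \varepsilon N_k) \cdot R > 0$. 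Thus the MMP preserves the nefness of $K_X + \Delta$ and one has $\kappa(X, K_X + \Delta) = \kappa(X', K_{X'} + \Delta')$. Restricting the semiampleness relation to a general fibre $X'_y$ forces $N_k'|_{X'_y} = 0$, so $N_k'$ is vertical over $\phi$; the projection formula then gives $\phi_* \mathcal O_{X'}(-k\varepsilon N_k') = \mathcal O_{Y'}(-k\varepsilon D)$ for an explicit effective $\Q$-divisor $D$ on $Y'$ (built from the multiplicities of components of $N_k'$ relative to those of the fibres of $\phi$), whence
\[
H^0\bigl(X', k(K_{X'} + \Delta')\bigr) = H^0\bigl(Y', k(H - \varepsilon D)\bigr)
\]
for every $k$ with $k\varepsilon$ integral. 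Since $H$ is ample and $\varepsilon$ is small, $H - \varepsilon D$ is big on $Y'$, yielding $\kappa(X', K_{X'} + \Delta') = \dim Y' = \kappa(X, N_m) = \max$.

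If instead $\kappa(X, N_m) = 0$ for every $m \in \mathcal S$, then $N_\sigma(N_m) = N_m$; the $\sigma$-decomposition of the family $m(K_X + \Delta) - F$ of pseudoeffective classes (using that $K_X + \Delta$ is nef) has support contained in a fixed reduced Weil divisor on $X$, so Lemma \ref{relation} produces $k \neq \ell$ in $\mathcal S$ with $N_k \geq N_\ell$, giving $(k - \ell)(K_X + \Delta) \sim_\Q N_k - N_\ell \geq 0$. Combined with $\nu(X, K_X + \Delta) = 0$ (from $\nu(N_m) \leq \nu(K_X + \Delta)$) and Nakayama's abundance, this forces $\kappa(X, K_X + \Delta) = 0 = \max$. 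The main obstacle is the termination and nefness-preservation of the MMP in the middle paragraph: one has to choose $\varepsilon = 1/(m - k)$ small enough that only $(K_X + \Delta)$-trivial rays are contracted, and then verify that the effective divisor $D$ on $Y'$ remains small enough (so that $H - \varepsilon D$ is indeed big) as the MMP is carried out.
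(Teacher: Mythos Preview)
The central gap is your claim that every extremal ray contracted in the $(K_X+\Delta+\varepsilon N_k)$-MMP is $(K_X+\Delta)$-trivial. You write that ``otherwise the nefness of $K_X+\Delta$ forces $(K_X+\Delta+\varepsilon N_k)\cdot R>0$'', but this is false: nefness only gives $(K_X+\Delta)\cdot R\geq0$, and since $N_k$ is merely effective (not nef) the term $\varepsilon N_k\cdot R$ can be arbitrarily negative, so $(K_X+\Delta+\varepsilon N_k)\cdot R<0$ is perfectly compatible with $(K_X+\Delta)\cdot R>0$. Taking $\varepsilon$ small does not help, because the MMP produces new models and new rays, and there is no a priori bound on how negative $N_k'\cdot R$ can become along the way. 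The paper resolves this with a genuine extra ingredient you are missing: Kawamata's bound on the length of extremal rays \cite{Kaw91}. One fixes $t$ with $t(K_X+\Delta)$ Cartier, picks a second index $\ell>k$ in $\mathcal S$ with $\varepsilon(\ell-k)>2nt$, and uses that any contracted ray contains a rational curve $C$ with $(K_X+\Delta+\varepsilon N_k)\cdot C\geq -2n$; if $(K_X+\Delta)\cdot C>0$ then $t(K_X+\Delta)\cdot C\geq1$, and the relation $N_\ell\sim_\Q N_k+(\ell-k)(K_X+\Delta)$ forces $(K_X+\Delta+\varepsilon N_\ell)\cdot C>0$, contradicting negativity. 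This Cartier/length-of-ray trick is essential and does not fall out of ``$\varepsilon$ small''.

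There is a second, related problem in your endgame. After the MMP you want $H-\varepsilon D$ to be big on $Y'$ because ``$\varepsilon$ is small''. But $H$, $\phi$, $N_k'$ and hence $D$ all depend on $\varepsilon$, so smallness of $\varepsilon$ gives no control on $\varepsilon D$ relative to $H$. (Also, your pushforward formula $\phi_*\OO_{X'}(-k\varepsilon N_k')=\OO_{Y'}(-k\varepsilon D)$ requires more than verticality of $N_k'$; vertical divisors are generally not pullbacks.) The paper avoids this entirely: it runs the MMP once, then for a further $m>\ell$ shows that $K_{X'}+\Delta'+\varepsilon N_m'$ is again semiample, compares the two Iitaka fibrations $\varphi_\ell$ and $\varphi_m$ to conclude that $K_{X'}+\Delta'$ itself is a pullback $\varphi_m^*B$, and then uses the preserved \emph{pseudoeffectivity of $F$} (not smallness of $\varepsilon$) to see that $F\sim_\Q\varphi_m^*B_0$ with $B_0$ pseudoeffective, whence $(1+\varepsilon m)(K_X+\Delta)\sim_\Q\varphi_m^*(A_m+B_0)$ with $A_m+B_0$ big. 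Your argument never uses $F$ after the first paragraph, and that is where the bigness on the base actually comes from.
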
 

\begin{proof}
\emph{Step 1.}
Note first that \eqref{eq:rel2a} implies
\begin{equation}\label{eq:rel2b}
N_p-N_q\sim_\Q (p-q)(K_X+\Delta)\quad\text{for all }p,q\in\mathcal S.
\end{equation}
We claim that for every $m\in\mathcal S$ and every rational $\varepsilon>0$ we have
\begin{equation}\label{eq:kodaira}
\kappa(X,K_X+\Delta+\varepsilon N_m)\geq\kappa(X,N_m).
\end{equation}
Indeed, pick $i_m\in\mathcal S$ very large such that $\varepsilon_m<\varepsilon$, where $\varepsilon_m=\frac{1}{i_m-m}$. Then by \eqref{eq:rel2b} we have
\begin{align}\label{eq:33a}
K_X+\Delta+\varepsilon N_m& =\varepsilon_m\big((i_m-m)(K_X+\Delta)+N_m\big)+(\varepsilon-\varepsilon_m)N_m\\
&\sim_\Q \varepsilon_m N_{i_m}+(\varepsilon-\varepsilon_m)N_m,\notag
\end{align}
which proves \eqref{eq:kodaira}.

\medskip

There are now three cases to consider.

\medskip

\emph{Step 2.}
First assume that $K_X+\Delta+\varepsilon N_p$ is big for some $p\in\mathcal S$ and some rational number $\varepsilon>0$. Then \eqref{eq:rel2a} implies that the divisor
$$(1+\varepsilon p)(K_X+\Delta) \sim_\Q K_X+\Delta+\varepsilon N_p+\varepsilon F$$
is big, and the result is clear since then $N_m$ is big for $m\gg0$ by \eqref{eq:rel2b}. 

\medskip

\emph{Step 3.}
Now assume that $\kappa(X,K_X+\Delta+\varepsilon N_p)=0$ for some $p\in\mathcal S$ and some rational number $\varepsilon>0$. Fix $q\in\mathcal S$ such that $q-p>1/\varepsilon$. Then as in \eqref{eq:33a} we have
$$\textstyle K_X+\Delta+\varepsilon N_p\sim_\Q \frac{1}{q-p} N_q+\big(\varepsilon-\frac{1}{q-p}\big)N_p,$$
hence $\kappa(X,N_q)\leq\kappa(X,K_X+\Delta+\varepsilon N_p)=0$, and therefore $\kappa(X,N_q)=0$. Let $r\in\mathcal S$ be such that $r>q$. Then by \eqref{eq:rel2b} we have
$$K_X+\Delta\sim_\Q\frac{1}{q-p}(N_q-N_p)\quad\text{and}\quad K_X+\Delta\sim_\Q \frac{1}{r-p}(N_r-N_p),$$
so that
$$(r-p)N_q\sim_\Q(q-p)N_r+(r-q)N_p\geq0.$$
Since $\kappa(X,N_q)=0$, this implies
$$(r-p)N_q=(q-p)N_r+(r-q)N_p, $$
and hence $\Supp N_r\subseteq\Supp N_q$ and $\kappa(X,N_r)=0$. Therefore, for $r>q$, all divisors $N_r$ are supported on a reduced Weil divisor. By Lemma \ref{relation}, there are positive integers $k\neq\ell$ larger than $q$ in $\mathcal S$ such that $N_k\leq N_\ell$, and hence by \eqref{eq:rel2b},
$$(\ell-k)(K_X+\Delta)\sim_\Q N_\ell-N_k\geq0,$$
hence $\kappa(X,K_X+\Delta)\geq 0$. Moreover, since then $\kappa(X,K_X+\Delta)\leq\kappa(X,K_X+\Delta+\varepsilon N_p)=0$, we have
$$\kappa(X,K_X+\Delta)=0.$$
If $m$ is an element of $\mathcal S$ with $m\geq q$, then $\kappa(X,N_m)=0$ by above, and if $m<q$, then $0=\kappa(X,N_q)\geq\kappa(X,N_m)$ by \eqref{eq:rel2b}, which yields the result.

\medskip

\emph{Step 4.}
Finally, assume that
\begin{equation}\label{eq:kodaira1}
0<\kappa(X,K_X+\Delta+\varepsilon N_p)<n \quad\text{for every }p\in\mathcal S\text{ and every }\varepsilon>0.
\end{equation}
It suffices to show that
\begin{equation}\label{eq:rel222}
\kappa(X,K_X+\Delta)\geq\kappa(X,N_m)\quad\text{for all large }m\in \mathcal S. 
\end{equation} 
Indeed, then $\kappa(X,K_X+\Delta)\geq0$, hence \eqref{eq:rel2b} gives $\kappa(X,K_X+\Delta)\leq\kappa(X,N_m)$ for all large $m\in \mathcal S$ and $\kappa(X,N_q)\leq\kappa(X,N_p)$ for $p,q\in\mathcal S$ with $q<p$, which implies the theorem.

Fix a positive integer $t$ such that $t(K_X+\Delta)$ is Cartier. Fix integers $\ell>k$ in $\mathcal S$ and fix a rational number $0<\varepsilon\ll1$ such that:
\begin{enumerate}
\item[(a)] the pair $(X,\Delta+\varepsilon N_k)$ is klt, and
\item[(b)] $\varepsilon(\ell-k)>2nt$. 
\end{enumerate}
Denote $\delta=\frac{\varepsilon}{\varepsilon(\ell-k)+1}$ and fix an ample divisor $A$ on $X$. We run the MMP with scaling of $A$ for the klt pair $(X,\Delta+\delta N_k)$. By \eqref{eq:rel2b} we have
\begin{equation}\label{eq:twoMMP}
\textstyle K_X+\Delta+\varepsilon N_\ell\sim_\Q\big(\varepsilon(\ell-k)+1\big)\big(K_X+\Delta+\delta N_k\big),
\end{equation}
hence every step in this MMP is a $(K_X+\Delta+\varepsilon N_\ell)$-negative map. Since we are assuming the existence of good models for klt pairs in lower dimensions, by Theorem \ref{thm:lai} our MMP with scaling of $A$ terminates with a good model for $(X,\Delta+\delta N_k)$. 

We claim that this MMP is $(K_X+\Delta)$-trivial, and hence the proper transform of $t(K_X+\Delta)$ at every step of this MMP is a nef Cartier divisor by \cite[Theorem 3.7(4)]{KM98}. Indeed, it is enough to show the claim for the first step of the MMP, as the rest is analogous. Let $c_R\colon X\to Z$ be the contraction of a $(K_X+\Delta+\delta N_k)$-negative extremal ray $R$ in this MMP. Since by \eqref{eq:rel2b} we have
\begin{equation}\label{eq:rel2c}
K_X+\Delta+\varepsilon N_\ell\sim_\Q K_X+\Delta+\varepsilon N_k+\varepsilon(\ell-k)(K_X+\Delta),
\end{equation}
and as $K_X+\Delta$ is nef, the equation \eqref{eq:twoMMP} implies that $R$ is also $(K_X+\Delta+\varepsilon N_k)$-negative. By the boundedness of extremal rays \cite[Theorem 1]{Kaw91}, there exists a rational curve $C$ contracted by $c_R$ such that 
$$(K_X+\Delta+\varepsilon N_k)\cdot C\geq {-}2n.$$
If $c_R$ were not $(K_X+\Delta)$-trivial, then $t(K_X+\Delta)\cdot C\geq1$ as $t(K_X+\Delta)$ is Cartier. But then \eqref{eq:rel2c} and the condition (b) above yield
$$(K_X+\Delta+\varepsilon N_\ell)\cdot C=(K_X+\Delta+\varepsilon N_k)\cdot C+\varepsilon(\ell-k)(K_X+\Delta)\cdot C>0,$$
a contradiction which proves the claim, i.e.\ the MMP is $(K_X + \Delta)$-trivial. 

\medskip

\emph{Step 5.}
In particular, the numerical Kodaira dimension and the Kodaira dimension of $K_X+\Delta$ are preserved in the MMP, see \cite[Theorem 3.7(4)]{KM98} and \S\ref{subsec:numdim}. Hence, $K_X+\Delta$ is not big by \eqref{eq:kodaira1}. Furthermore, the proper transform of $F$ is pseudoeffective, and the Kodaira dimension of the proper transform of each $N_m$ for $m\in\mathcal S$ does not decrease by Lemma \ref{lem:pushforward}. Therefore, by replacing $X$ by the resulting minimal model, we may assume that $K_X+\Delta+\delta N_k$ is semiample, and hence the divisor $K_X+\Delta+\varepsilon N_\ell$ is also semiample by \eqref{eq:twoMMP}. Note also that $\kappa(X,K_X+\Delta+\varepsilon N_m)>0$ for all $m\in\mathcal S$ by Lemma \ref{lem:pushforward}.

Fix $m\in\mathcal S$ such that $m>\ell$. Then the divisor 
$$K_X+\Delta+\varepsilon N_m\sim_\Q K_X+\Delta+\varepsilon N_\ell+\varepsilon(m-\ell)(K_X+\Delta)$$
is nef. Notice that $K_X+\Delta+\varepsilon N_m$ is not big, since otherwise $K_X + \Delta$ would be big as in Step 2. 

Therefore, we have $0<\kappa(X,K_X+\Delta+\varepsilon N_m)<n$. By \eqref{eq:rel2b} we have
$$\textstyle K_X+\Delta+\varepsilon N_m\sim_\Q\big(\varepsilon(m-k)+1\big)\big(K_X+\Delta+\frac{\varepsilon}{\varepsilon(m-k)+1} N_k\big),$$
and the pair $(X,\Delta+\frac{\varepsilon}{\varepsilon(m-k)+1} N_k)$ is klt. Since we are assuming the existence of good models for klt pairs in lower dimensions, by Theorem \ref{thm:lai} the divisor $K_X+\Delta+\varepsilon N_m$ is semiample. 

Let $\varphi_\ell\colon X\to S_\ell$ and $\varphi_m\colon X\to S_m$ be the Iitaka fibrations associated to $K_X+\Delta+\varepsilon N_\ell$ and $K_X+\Delta+\varepsilon N_m$, respectively. Then there exist ample $\Q$-divisors $A_\ell$ on $S_\ell$ and $A_m$ on $S_m$ such that 
$$K_X+\Delta+\varepsilon N_\ell\sim_\Q\varphi_\ell^*A_\ell\quad\text{and}\quad K_X+\Delta+\varepsilon N_m\sim_\Q\varphi_m^*A_m.$$
If $\xi$ is a curve on $X$ contracted by $\varphi_m$, then 
$$0=(K_X+\Delta+\varepsilon N_m)\cdot \xi=(K_X+\Delta+\varepsilon N_\ell)\cdot \xi+\varepsilon(m-\ell)(K_X+\Delta)\cdot \xi,$$
hence $(K_X+\Delta+\varepsilon N_\ell)\cdot \xi=(K_X+\Delta)\cdot \xi=0$. In particular, $\xi$ is contracted by $\varphi_\ell$, which implies that there exists a morphism $\psi\colon S_m\to S_\ell$ such that $\varphi_\ell=\psi\circ\varphi_m$. 
\[
\xymatrix{ 
& X \ar[ld]_{\varphi_m} \ar[dr]^{\varphi_\ell} & \\
S_m \ar[rr]^{\psi} & & S_\ell 
}
\]
Therefore, denoting $B=\frac{1}{\varepsilon(m-\ell)} (A_m-\psi^*A_\ell)$, we have
$$K_X+\Delta\sim_\Q \frac{1}{\varepsilon(m-\ell)}\big((K_X+\Delta+\varepsilon N_m)-(K_X+\Delta+\varepsilon N_\ell)\big)\sim_\Q\varphi_m^*B.$$
Denoting 
$$\textstyle B_0=\big(m+\frac{1}{\varepsilon}\big)B-\frac{1}{\varepsilon}A_m,$$ 
it is easy to check from \eqref{eq:rel2a} that 
$$F\sim_\Q\varphi_m^*B_0,$$
and hence $B_0$ is pseudoeffective. Therefore the divisor $A_m+B_0$ is big on $S_m$, and
$$(1+\varepsilon m)(K_X+\Delta)\sim_\Q K_X+\Delta+\varepsilon N_m+\varepsilon F\sim_\Q\varphi_m^*(A_m+B_0).$$
By \eqref{eq:kodaira}, this yields
\begin{align}\label{eq:equality0}
\kappa(X,K_X+\Delta)&=\kappa(S_m,A_m+B_0)=\dim S_m\\
&=\kappa(X,K_X+\Delta+\varepsilon N_m)\geq\kappa(X,N_m),\notag
\end{align}
and note that this holds for all $m\in\mathcal S$ sufficiently large. This proves \eqref{eq:rel222} and finishes the proof of the theorem.
\hfill $\Box$
\end{proof}

Now we treat the general case when $K_X+\Delta$ is pseudoeffective. We start with the following result which is implicit already in \cite{DHP13,DL15}. It says that if a klt pair $(X,\Delta)$ has a fibration to a lower dimensional variety which is not a point, then often the existence of good models holds, assuming the Minimal Model Program in lower dimensions.

\begin{pro}\label{pro:contraction}
Assume the existence of good models for klt pairs in dimensions at most $n-1$. Let $(X,\Delta)$ be a $\Q$-factorial projective klt pair such that $K_X+\Delta$ is pseudoeffective. If there exists a fibration $X\to Z$ to a normal projective variety $Z$ such that $\dim Z\geq 1$ and $K_X+\Delta$ is not big over $Z$, then $(X,\Delta)$ has a good model.
\end{pro}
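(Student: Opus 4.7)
The plan is to run a relative MMP over $Z$ and descend, via the canonical bundle formula, to a lower-dimensional klt pair on which the inductive hypothesis produces a good model.

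First I would restrict $(X,\Delta)$ to a general fibre $F$ of the fibration $f\colon X\to Z$. Since $\dim Z\geq 1$, the pair $(F,\Delta|_F)$ is klt of dimension at most $n-1$, and $(K_X+\Delta)|_F\sim_\Q K_F+\Delta|_F$ is pseudoeffective. By the inductive assumption, $(F,\Delta|_F)$ has a good model. Running a $(K_X+\Delta)$-MMP over $Z$ with scaling of an ample divisor, one obtains, via the standard fibrewise-to-relative descent of good models used in \cite{DL15} (following \cite{HX13}), termination with a relative good model $(X',\Delta')$ of $(X,\Delta)$ over $Z$. Replacing $X$ by $X'$, I may assume $K_X+\Delta$ is semiample over $Z$, so there is a factorisation $\pi\colon X\to Z'$ over $Z$ and a $\Q$-divisor $H$ on $Z'$ which is $\pi$-ample relative to $Z$ with $K_X+\Delta\sim_\Q \pi^*H$. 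Because $K_X+\Delta$ is not big over $Z$, the relative Iitaka dimension is strictly less than the relative dimension, and hence $\dim Z'<n$.

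Second, since $K_X+\Delta$ is pseudoeffective on $X$, Lemma \ref{lem:pushforward} gives that $H$ is pseudoeffective on $Z'$. I now apply the canonical bundle formula of Ambro--Kawamata to the fibration $\pi$ to write
$$H\sim_\Q K_{Z'}+B_{Z'}+M_{Z'},$$
where $(Z',B_{Z'})$ is (after passing to a suitable higher birational model) klt and $M_{Z'}$ is the nef moduli part, so that $(Z',B_{Z'}+M_{Z'})$ is a generalised klt pair whose log-canonical class is pseudoeffective. Since $\dim Z'\leq n-1$, the inductive assumption, applied in the framework of generalised pairs, produces a good model of $(Z',B_{Z'}+M_{Z'})$; pulling back the resulting semiample $\Q$-divisor by $\pi$, one obtains that $K_X+\Delta$ itself admits a good model, which is what we wanted.

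The main obstacle is to carry out the descent of good models from fibres to the relative setting and then to the base through the canonical bundle formula in the generalised pair context: one has to make sure that the moduli part $M_{Z'}$ stays well-behaved and that the inductive hypothesis applies, exactly as in \cite{DHP13,DL15}. Once this machinery is in place, the reduction $\dim Z'<n$ forced by the assumption that $K_X+\Delta$ is not big over $Z$ makes the induction go through.
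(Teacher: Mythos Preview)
Your overall architecture matches the paper's: run a relative $(K_X+\Delta)$-MMP over $Z$ using the inductive hypothesis on the general fibre (via \cite{DL15}, \cite{HX13}, \cite{Fuj11}) to reach a relative good model, then pass to the relative canonical model $\pi\colon X\to Z'$ with $\dim Z'<n$ because $K_X+\Delta$ is not big over $Z$, and finally descend to the base through a canonical bundle formula.

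The gap is in your last step. You write $H\sim_\Q K_{Z'}+B_{Z'}+M_{Z'}$ as a generalised pair and then invoke the inductive hypothesis ``in the framework of generalised pairs''. But the standing assumption is the existence of good models for \emph{klt pairs} in dimensions $\leq n-1$, not for generalised pairs; you cannot feed $(Z',B_{Z'}+M_{Z'})$ directly into that hypothesis. You flag this yourself as ``the main obstacle'' but do not resolve it. The paper avoids the issue entirely by quoting Ambro's theorem \cite[Theorem 0.2]{Amb05a}, which furnishes an \emph{honest} klt pair $(X_{\can},\Delta_{\can})$ on the base with $K_{X_{\min}}+\Delta_{\min}\sim_\Q\varphi^*(K_{X_{\can}}+\Delta_{\can})$; the moduli part is already absorbed into an effective boundary. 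One then applies the inductive hypothesis directly to $(X_{\can},\Delta_{\can})$ and concludes via Lemma~\ref{lem:Kappa=KappaSigma}. If you replace your generalised-pair argument by this citation, your proof becomes the paper's proof.

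Two minor points: Lemma~\ref{lem:pushforward} concerns birational contractions, not fibrations, so it is the wrong reference for the pseudoeffectivity of $H$; that fact follows simply because $\pi^*H\sim_\Q K_X+\Delta$ is pseudoeffective and $\pi$ is surjective. Also, to start the relative MMP you should note (as the paper does, via \cite[Lemma 3.2.1]{BCHM}) that $K_X+\Delta$ is effective over $Z$.
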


\begin{proof}
The divisor $K_X+\Delta$ is effective over $Z$ by induction on the dimension and by \cite[Lemma 3.2.1]{BCHM}. By \cite[Theorem 2.5]{DL15} and by \cite[Theorem 1.1]{Fuj11} there exists a good model $(X,\Delta)\dashrightarrow (X_{\min},\Delta_{\min})$ of $(X,\Delta)$ over $Z$; alternatively, this follows from \cite[Theorem 2.12]{HX13}. Let $\varphi\colon X_{\min}\to X_\can$ be the corresponding fibration to the canonical model of $(X,\Delta)$ over $Z$.
\[
\xymatrix{ 
X \ar[dr] \ar@{-->}[r] & X_{\min} \ar[d] \ar[r]^{\varphi} & X_\can \ar[dl]\\
& Z & 
}
\]
Since $K_X+\Delta$ is not big over $Z$, we have $\dim X_\can<\dim X$. By \cite[Theorem 0.2]{Amb05a}, there exists a divisor $\Delta_\can\geq0$ on $X_\can$ such that the pair $(X_\can,\Delta_\can)$ is klt and 
$$K_{X_{\min}}+\Delta_{\min}\sim_\Q\varphi^*(K_{X_\can}+\Delta_\can).$$
Since we assume the existence of good models for klt pairs in dimensions at most $n-1$, by Lemma \ref{lem:Kappa=KappaSigma} we have 
$$\kappa(X_\can,K_{X_\can}+\Delta_\can)=\nu(X_\can,K_{X_\can}+\Delta_\can),$$ 
and the result follows by the discussion in \S\ref{subsec:numdim} and by another application of Lemma \ref{lem:Kappa=KappaSigma}.
\hfill $\Box$
\end{proof}

\begin{thm} \label{thm:MMPtwist3}
Assume the existence of good models for klt pairs in dimensions at most $n-1$. Let $(X,\Delta)$ be a $\Q$-factorial projective klt pair of dimension $n$ such that $K_X+\Delta$ is pseudoeffective. Assume that there exist a pseudoeffective $\Q$-divisor $F$ on $X$ and an infinite subset $\mathcal S\subseteq\N$ such that 
\begin{equation}\label{eq:rel2a3}
N_m+F\sim_\Q m(K_X+\Delta)
\end{equation}
for all $m\in\mathcal S$, where $N_m\geq0$ are integral Weil divisors. Then 
$$\kappa(X,K_X+\Delta)=\max\{\kappa(X,N_m)\mid m\in\mathcal S\}\geq0.$$
\end{thm}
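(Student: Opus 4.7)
The plan is to reduce to Theorem \ref{thm:MMPtwist} (the nef case) by using an auxiliary MMP together with Proposition \ref{pro:contraction} to pass to a birational model on which $K+\Delta$ has become nef. The computational starting point, identical to the one in Theorem \ref{thm:MMPtwist}, is that for any $k<\ell$ in $\mathcal S$ and $\varepsilon_{k,\ell}:=1/(\ell-k)$, the relation \eqref{eq:rel2a3} rewrites as
\[
K_X+\Delta+\varepsilon_{k,\ell}N_k\sim_\Q\varepsilon_{k,\ell}N_\ell,
\]
so $\kappa(X,K_X+\Delta+\varepsilon_{k,\ell}N_k)=\kappa(X,N_\ell)$; in particular the left-hand side is effective.

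If $\kappa(X,N_m)=0$ for every $m\in\mathcal S$, I would repeat the argument of Step 3 in the proof of Theorem \ref{thm:MMPtwist} essentially verbatim: combining \eqref{eq:rel2a3} at three indices $p<q<r$ in $\mathcal S$ gives $(r-p)N_q\sim_\Q(q-p)N_r+(r-q)N_p$, and because a class of Kodaira dimension zero has a unique effective representative, this becomes an equality of divisors, forcing $\Supp N_r\subseteq\Supp N_q$ for all large $r$. Lemma \ref{relation} then yields $N_k\geq N_\ell$ for some $k>\ell$, hence $(k-\ell)(K_X+\Delta)\sim_\Q N_k-N_\ell\geq 0$; since $N_k-N_\ell\leq N_k$ and $\kappa(X,N_k)=0$, this gives $\kappa(X,K_X+\Delta)=0=\max\kappa(X,N_m)$.

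Otherwise $\kappa(X,N_\ell)\geq 1$ for some $\ell\in\mathcal S$. Pick $k<\ell$ in $\mathcal S$ with $\ell-k$ large enough that $\delta:=\varepsilon_{k,\ell}$ is small and $(X,\Delta+\delta N_k)$ is klt; the identity above then gives $\kappa(X,K_X+\Delta+\delta N_k)\geq 1$, so Theorem \ref{thm:lai} delivers a good model $\pi\colon X\dashrightarrow X'$ of $(X,\Delta+\delta N_k)$ on which $K_{X'}+\Delta'+\delta N_k'$ is semiample. Its Iitaka fibration $\varphi\colon X'\to Z$ satisfies $K_{X'}+\Delta'+\delta N_k'\sim_\Q\varphi^*A$ for some ample $A$ on $Z$, with $\dim Z=\kappa(X,N_\ell)\geq 1$. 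On a general fiber $X_\eta$ of $\varphi$, the divisor $(K_{X'}+\Delta')|_{X_\eta}$ is pseudoeffective (as the restriction of a pushforward of a pseudoeffective divisor) and $\sim_\Q-\delta N_k'|_{X_\eta}$; since $\delta N_k'|_{X_\eta}\geq 0$ and its negative is pseudoeffective, movable-curve duality on $X_\eta$ forces $N_k'|_{X_\eta}\equiv 0$, hence $(K_{X'}+\Delta')|_{X_\eta}\equiv 0$. Therefore $K_{X'}+\Delta'$ is not big over $Z$, and Proposition \ref{pro:contraction} produces a good model $(X^\flat,\Delta^\flat)$ of $(X',\Delta')$ on which $K_{X^\flat}+\Delta^\flat$ is nef and semiample. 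Pushing \eqref{eq:rel2a3} forward along $X\dashrightarrow X^\flat$ places us in the setting of Theorem \ref{thm:MMPtwist}, which yields $\kappa(X^\flat,K_{X^\flat}+\Delta^\flat)=\max_m\kappa(X^\flat,N_m^\flat)$; transferring back to $X$ via Lemma \ref{lem:pushforward} and the monotonicity of $\kappa(X,N_m)$ in $m$ (automatic from \eqref{eq:rel2a3} once $\kappa(X,K_X+\Delta)\geq 0$) delivers the claimed equality.

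The main obstacle I anticipate is the final descent in the second case: since the auxiliary MMP is not $(K_X+\Delta)$-trivial, Lemma \ref{lem:pushforward} furnishes only $\kappa(X,K_X+\Delta)\leq\kappa(X^\flat,K_{X^\flat}+\Delta^\flat)$, and the reverse inequality, which is equivalent to the assertion that the composite contraction $X\dashrightarrow X^\flat$ is itself a good model of $(X,\Delta)$, must be verified by a discrepancy computation on a common resolution. This comes down to showing that the effective $q$-exceptional correction produced by the good model of $(X,\Delta+\delta N_k)$ dominates the potentially negative contribution of subtracting $\delta N_k$, so that the total correction is again effective; once this is in place, Lemma \ref{lem:Kappa=KappaSigma} combined with Theorem \ref{thm:MMPtwist} on $X^\flat$ concludes.
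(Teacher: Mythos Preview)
Your strategy and the paper's agree on Steps 1--3 (the big case and the $\kappa(N_m)=0$ case go through verbatim), but they diverge precisely at the point you flag as the ``main obstacle,'' and your proposed discrepancy fix does not work.

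The difficulty is real: the map $X\dashrightarrow X'$ you produce is a $(K_X+\Delta+\delta N_k)$-MMP, not a $(K_X+\Delta)$-MMP, and it may well contract a divisor $D$ on which $K_X+\Delta$ is positive while $N_k$ is sufficiently negative. On a common resolution one has
\[
p^*(K_X+\Delta)-q^*(K_{X'}+\Delta')=E_1+\delta\bigl(q^*N_k'-p^*N_k\bigr),
\]
and there is no mechanism forcing $E_1$ to dominate $\delta(p^*N_k-q^*N_k')$: the coefficients of $E_1$ are governed by the discrepancies of $K_X+\Delta+\delta N_k$, while $\delta(p^*N_k-q^*N_k')$ depends only on the multiplicities of $N_k$ along the contracted centres. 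These are unrelated quantities, so the ``total correction'' need not be effective, and neither $\kappa(X,K_X+\Delta)=\kappa(X^\flat,K_{X^\flat}+\Delta^\flat)$ nor even $\kappa(X,K_X+\Delta)\geq0$ follows from your construction. Consequently, the transfer back via Lemma~\ref{lem:pushforward} gives only one inequality.

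The paper circumvents this entirely by never running an MMP for a perturbed pair when establishing nonvanishing. Instead it works on a \emph{resolution} $\pi\colon Y\to X$ with $K_Y+\Gamma\sim_\Q\pi^*(K_X+\Delta)+E$, so that $\kappa$ is preserved tautologically. The role of your Iitaka fibration $\varphi\colon X'\to Z$ is played by the Iitaka fibration $f\colon Y\to Z$ of $L=K_X+\Delta+N_p$; the extra input is Lemma~\ref{lem:iitaka}, which guarantees $\kappa\big(F,(\pi^*L+E)|_F\big)=0$ on a very general fibre $F$ even after adding the exceptional part $E$. Since $K_Y+\Gamma+\pi^*N_p\sim_\Q\pi^*L+E$, this forces $\kappa\big(F,(K_Y+\Gamma)|_F\big)=0$ (using nonvanishing on $F$ by induction), so Proposition~\ref{pro:contraction} applied to $(Y,\Gamma)$ gives $\kappa(X,K_X+\Delta)=\kappa(Y,K_Y+\Gamma)\geq0$ directly. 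Only \emph{then} does the paper pass to a minimal model, and crucially it is a minimal model of $(X,\Delta)$ itself (available by \cite[Theorem~2.5]{DL15} once $\kappa\geq0$), so the descent issue never arises and Theorem~\ref{thm:MMPtwist} on $X_{\min}$ combined with Lemma~\ref{lem:pushforward} finishes.
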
 

\begin{proof}
We first observe that Steps 1--3 of the proof of Theorem \ref{thm:MMPtwist} work also in the case when $K_X+\Delta$ is pseudoeffective and not only nef. The relation \eqref{eq:rel2a3} implies
\begin{equation}\label{eq:rel2b3}
N_p-N_q\sim_\Q (p-q)(K_X+\Delta)\quad\text{for all }p,q\in\mathcal S,
\end{equation}
and as in the proof of Theorem \ref{thm:MMPtwist}, for every $m\in\mathcal S$ and every rational $\varepsilon>0$ we have
$$\kappa(X,K_X+\Delta+\varepsilon N_m)\geq\kappa(X,N_m).$$
Again as in that proof, we may assume that
\begin{equation}\label{eq:kodaira13}
0<\kappa(X,K_X+\Delta+\varepsilon N_p)<n \quad\text{for every }p\in\mathcal S\text{ and every }\varepsilon>0.
\end{equation}
We first show that
\begin{equation}\label{eq:claim}
\kappa(X,K_X+\Delta)\geq0.
\end{equation} 
Fix $p\in\mathcal S$ and denote $L=K_X+\Delta+N_p$. By Lemma \ref{lem:iitaka} there exists a resolution $\pi\colon Y\to X$ and a morphism $f\colon Y\to Z$:
\[
\xymatrix{ 
Y \ar[d]_{\pi} \ar[r]^{f} & Z\\
X & 
}
\]
such that $\dim Z=\kappa(X,L)\in\{1,\dots,n-1\}$, and for a very general fibre $F$ of $f$ and for every $\pi$-exceptional $\Q$-divisor $G$ on $Y$ we have
\begin{equation}\label{eq:exceptional}
\kappa\big(F,(\pi^*L+G)|_F\big)=0.
\end{equation} 
There exist $\Q$-divisors $\Gamma,E\geq0$ without common components such that
$$K_Y+\Gamma\sim_\Q\pi^*(K_X+\Delta)+E,$$
and it is enough to show that $\kappa(Y,K_Y+\Gamma)\geq0$. By \eqref{eq:exceptional} we have
\begin{equation}\label{eq:Gamma}
\kappa\big(F,(K_Y+\Gamma+\pi^*N_p)|_F\big)=\kappa\big(F,(\pi^*L+E)|_F\big)=0,
\end{equation}
and since $\kappa\big(F,(K_Y+\Gamma)|_F)\geq0$ by induction on the dimension, the equation \eqref{eq:Gamma} implies
$$\kappa\big(F,(K_Y+\Gamma)|_F\big)=0.$$ 
But then $\kappa(Y,K_Y+\Gamma)\geq0$ by Proposition \ref{pro:contraction}, which proves \eqref{eq:claim}.

Note that then by \eqref{eq:rel2b3} we have $\kappa(X,K_X+\Delta)\leq\kappa(X,N_m)$ for all large $m\in \mathcal S$ and $\kappa(X,N_q)\leq\kappa(X,N_p)$ for $p,q\in\mathcal S$ 
with $q<p$, hence it suffices to show that $\kappa(X,K_X+\Delta)\geq\kappa(X,N_m)$ for all $m\in \mathcal S$. Now, by \cite[Theorem 2.5]{DL15} there exists a minimal model 
$$\varphi\colon (X,\Delta)\dashrightarrow (X_{\min},\Delta_{\min})$$ 
of $(X,\Delta)$, and for all $m\in\mathcal S$ we have
$$\varphi_*N_m+\varphi_*F\sim_\Q m(K_{X_{\min}}+\Delta_{\min}).$$
Then Theorem \ref{thm:MMPtwist} implies 
$$\kappa(X_{\min},K_{X_{\min}}+\Delta_{\min})=\max\{\kappa(X_{\min},\varphi_*N_m)\mid m\in\mathcal S\},$$ 
and the result follows from Lemma \ref{lem:pushforward}.
\hfill $\Box$
\end{proof}

Finally, the following is an immediate corollary of the proof of Theorem \ref{thm:MMPtwist3}. 

\begin{thm} \label{thm:MMPtwist2}
Assume the existence of good models for klt pairs in dimensions at most $n-1$. Let $(X,\Delta)$ be a $\Q$-factorial projective klt pair of dimension $n$ such that $K_X+\Delta$ is pseudoeffective. Assume that there exists a pseudoeffective $\Q$-divisor $F$ on $X$ such that $\kappa\big(X,m(K_X+\Delta)-F\big)\geq1$ for infinitely many $m$. Then $(X,\Delta)$ has a good model.
\end{thm}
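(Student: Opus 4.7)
The plan is to observe that this really is an immediate corollary of Theorem \ref{thm:MMPtwist3}: the only new information beyond that theorem's hypothesis is the stronger assumption that the Kodaira dimension of $m(K_X+\Delta)-F$ is at least $1$ (rather than merely nonnegative) for infinitely many $m$, which forces $\kappa(X,K_X+\Delta)\geq 1$, and then Lai's theorem converts this into the existence of a good model.

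First I would translate the hypothesis into the precise form demanded by Theorem \ref{thm:MMPtwist3}. Fix a positive integer $r$ so that both $rF$ and $r(K_X+\Delta)$ are integral Cartier divisors. Since by assumption $\kappa(X,m(K_X+\Delta)-F)\geq 1$ for infinitely many $m$, this remains true after restricting to multiples of $r$. For each such $m$ in the resulting infinite subset $\mathcal S\subseteq\N$, choose an integral effective Weil divisor $N_m\geq 0$ with
$$N_m\sim_\Q m(K_X+\Delta)-F,\qquad \kappa(X,N_m)\geq 1.$$
Then $N_m+F\sim_\Q m(K_X+\Delta)$ for all $m\in\mathcal S$, so the hypothesis of Theorem \ref{thm:MMPtwist3} is satisfied.

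Theorem \ref{thm:MMPtwist3} then yields
$$\kappa(X,K_X+\Delta)=\max\{\kappa(X,N_m)\mid m\in\mathcal S\}\geq 1.$$
Since $(X,\Delta)$ is a $\Q$-factorial projective klt pair with $\kappa(X,K_X+\Delta)\geq 1$, and since we are assuming the existence of good models for klt pairs in dimensions at most $n-1$, Theorem \ref{thm:lai} implies that $(X,\Delta)$ has a good model, completing the proof.

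No serious obstacle is expected: each step reduces to previously established machinery. The only small point requiring attention is the rescaling to integral Weil divisors at the outset, but this is harmless since $\kappa$ is invariant under positive rational scaling and $\Q$-linear equivalence.
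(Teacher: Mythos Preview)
Your overall strategy is correct and is the same as the paper's: deduce $\kappa(X,K_X+\Delta)\geq 1$ from Theorem \ref{thm:MMPtwist3} and then invoke Theorem \ref{thm:lai}. However, there is a genuine gap at the step ``choose an integral effective Weil divisor $N_m\geq 0$ with $N_m\sim_\Q m(K_X+\Delta)-F$''. The hypothesis $\kappa\big(X,m(K_X+\Delta)-F\big)\geq 1$ only guarantees that some positive \emph{multiple} of $m(K_X+\Delta)-F$ has sections, hence only yields an effective $\Q$-divisor $\sim_\Q m(K_X+\Delta)-F$, not an integral one. Restricting $m$ to multiples of $r$ does not help: even when $r\big(m(K_X+\Delta)-F\big)$ is an integral Cartier divisor, there is no reason for $h^0$ of that specific divisor to be positive. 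Since the statement of Theorem \ref{thm:MMPtwist3} requires the $N_m$ to be integral Weil divisors---this is used in Step~3 of the proof of Theorem \ref{thm:MMPtwist} via Lemma \ref{relation}, whose pigeonhole argument genuinely needs integer multiplicities---you cannot apply it as a black box.

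The paper's fix is to take $\Q$-divisors $N_m\geq 0$ and observe that the integrality hypothesis is only used in Step~3 of the proof of Theorem \ref{thm:MMPtwist}, which handles the case $\kappa(X,K_X+\Delta+\varepsilon N_p)=0$. By \eqref{eq:33a} one has $\kappa(X,K_X+\Delta+\varepsilon N_m)\geq\kappa(X,N_m)\geq 1$ for every $m\in\mathcal S$, so that case never arises here, and the remaining steps of the proof of Theorem \ref{thm:MMPtwist3} go through verbatim with $\Q$-divisors. One then concludes by Theorem \ref{thm:lai}, exactly as you do.
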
 

\begin{proof}
By assumption, the set $\mathcal S=\{m\in\N\mid \kappa\big(X,m(K_X+\Delta)-F\big)\geq1\}$ has infinitely many elements, and for every $m\in\mathcal S$, there exists a $\Q$-divisor $N_m\geq0$ with $\kappa(X,N_m) \geq 1$ such that
$$N_m+F\sim_\Q m(K_X+\Delta).$$
Then as in the proof of Theorem \ref{thm:MMPtwist3}, we obtain 
$$\kappa(X,K_X+\Delta)\geq\max\{\kappa(X,N_m)\mid m\in\mathcal S\}\geq1;$$
note that the condition that the divisors $N_m$ are \emph{integral} was only used in Step 3 of the proof of Theorem \ref{thm:MMPtwist}, and the situation in this step  does not happen in our context by the equation \eqref{eq:33a}. We conclude by Theorem \ref{thm:lai}.
\hfill $\Box$
\end{proof}

\section{Nonvanishing}\label{sec:thmA}

In this section, we prove part (i) of  Theorem \ref{thm:A}. Note that by \cite[Theorem 8.8]{DHP13}, assuming the existence of good models for a klt pairs in dimensions at most $n-1$, the nonvanishing for \emph{klt pairs} in dimension $n$ reduces to nonvanishing for \emph{terminal varieties} in dimension $n$. Therefore, one does not gain any generality when one considers nonvanishing for pairs. The situation is, however, different when one considers semiampleness.

\medskip

We start with the following two results which will be used several times in the paper. 

\begin{lem} \label{lemmafund} 
%Let $Y$ be a projective manifold, let $\mathcal E$ be a locally free sheaf on $Y$ and let $\mathcal Q$ be a torsion-free sheaf on $Y$ together with a surjective morphism $ \mathcal E \to \mathcal Q$. Assume that the line bundle $\det \mathcal Q$ is pseudoeffective. Let $\mathcal L$ be a pseudoeffective line bundle on $Y$ which is not numerically trivial, and assume furthermore that 
%$$ H^0(Y,\mathcal E \otimes \mathcal L^{\otimes m}) \neq 0\quad\text{for infinitely many integers }m.$$
%Then there exist a positive integer $\ell$, a pseudoeffective divisor $D$, and infinitely many positive integers $m$ and effective divisors $N_m$ such that 
%$$ \OO_Y(N_m + D) \simeq \mathcal L^{\otimes m} \otimes (\det \mathcal E)^{\otimes \ell}.$$
Let $X$ be a projective manifold and let $\mathcal E$ be a locally free sheaf on $X$. Let $\mathcal L$ be a pseudoeffective line bundle on $X$ which is not numerically trivial, and assume furthermore that 
$$ H^0(X,\mathcal E \otimes \mathcal L^{\otimes m}) \neq 0\quad\text{for infinitely many integers }m.$$
Then there exist a positive integer $r$ and a saturated line bundle $\mathcal M$ in $\bigwedge^r\mathcal E$ such that 
$$H^0(X,\mathcal M\otimes \mathcal L^{\otimes m}) \neq 0\quad\text{for infinitely many positive integers }m.$$
\end{lem} 

\begin{proof} 
By assumption, there exists an infinite set $\mathcal T\subseteq \Z$ such that 
$$ H^0(X, \mathcal E \otimes \mathcal L^{\otimes m}) \neq 0\quad\text{for all }m\in\mathcal T.$$
Denote $Y=\PS(\mathcal E)$ with the projection $f\colon Y\to X$. First note that 
$$H^0(X,\mathcal E\otimes \mathcal L^{\otimes m})\simeq H^0(Y,\OO_Y(1)\otimes f^*\mathcal L^{\otimes m}).$$
Since $\mathcal L$ is pseudoeffective and not numerically trivial, the line bundle $\OO_Y(1)\otimes f^*\mathcal  L^{\otimes m}$ is not pseudoeffective for $m\ll0$, hence there are only finitely many negative integers in $\mathcal{T}$. Therefore, we may assume that $\mathcal T\subseteq\N$.

Fix a nontrivial section of $H^0(X,\mathcal E \otimes \mathcal L^{\otimes m})$ for every $m \in \mathcal T$. It gives an inclusion 
$$\mathcal L^{\otimes -m} \to \mathcal E,$$  
and let $\mathcal F \subseteq\mathcal E$ be the image of the induced map 
$$\textstyle\bigoplus_{m\in \mathcal T} \mathcal L^{\otimes -m} \to \mathcal E.$$
Then $\mathcal F$ is quasi-coherent by \cite[Proposition II.5.7]{Har77}, and therefore a torsion-free coherent sheaf as it is a subsheaf of the torsion-free coherent sheaf $\mathcal E$. 
Let $r$ be the rank of $\mathcal F$. We may assume that there exist infinitely many $r$-tuples $(m_1,\dots,m_r)$ such that the image of the map 
\begin{equation}\label{eq:inclusion}
\mathcal L^{\otimes -m_1} \oplus\cdots\oplus \mathcal L^{\otimes -m_r} \to\mathcal F
\end{equation}
has rank $r$: indeed, if this is not the case, we replace $\mathcal T$ by a suitable infinite subset, and the rank of $\mathcal F$ is smaller than $r$. Taking determinants in \eqref{eq:inclusion} yields inclusions
\begin{equation}\label{eq:inclusion2}
\mathcal L^{\otimes {-}(m_1+\dots+m_r)} \to \det\mathcal F\subseteq \bigwedge^r\mathcal E.
\end{equation}
Let $\mathcal M$ be the line bundle which is the saturation of $\det\mathcal F$ in $\bigwedge^r\mathcal E$. Then by \eqref{eq:inclusion2} there exists an infinite set $\mathcal S\subseteq \N$ such that
$$H^0(X,\mathcal M\otimes \mathcal L^{\otimes m}) \ne 0\quad\text{for all }m\in\mathcal S,$$
which proves the result.
%Consider the exact sequence
%$$ 0 \to \mathcal M \to \bigwedge^r\mathcal E\to \mathcal Q \to 0.$$
%Since $\mathcal M$ is saturated in $\bigwedge^r\mathcal E$, the sheaf $\mathcal Q$ is torsion-free, and hence $D :=c_1(\mathcal Q)$ is pseudoeffective by Theorem \ref{thm:CP11}. From the above exact sequence, 
%there exists a positive integer $\ell$ such that $(\det \mathcal E)^{\ell}  \sim \OO_Y(D - F)$. 
%
%From \eqref{eq:infmany}, for every $m\in\mathcal S$ we obtain an effective divisor $ N_{m+\ell}$ such that 
%$$\OO_Y( N_{m+\ell}) \sim \pi^* \mathcal L^m  \otimes \OO_Y(-D),$$ and hence
%\begin{equation}\label{eq:relation}
%\OO_Y(N_{m+\ell}+  D) \sim \pi^* \mathcal L^m \otimes (\det \mathcal E)^{\ell}. 
%\end{equation}
\hfill $\Box$
\end{proof} 

Now we use Theorem \ref{thm:CP11} to connect the previous lemma to our setting.

\begin{pro}\label{pro:quotient}
Let $(X,\Delta)$ be a log smooth projective pair, where $\Delta$ is a reduced divisor and $K_X+\Delta$ is pseudoeffective. Let $\mathcal E$ be a tensor representation of $\Omega_X^1(\log\Delta)$, let $\mathcal M$ be a saturated line bundle in $\mathcal E$, and let $L$ be an integral divisor on $X$. Assume that there exists an infinite set $\mathcal S\subseteq\N$ and integral divisors $N_m\geq0$ for $m\in\mathcal S$ such that
\begin{equation}\label{eq:infmanym}
\OO_X(N_m)\simeq \mathcal M\otimes \OO_X(mL) \quad\text{for every }m\in\mathcal S.
\end{equation}
Then there exist a positive integer $\ell$ and a pseudoeffective divisor $F$ such that
$$N_m+ F \sim mL+\ell (K_X+\Delta) \quad\text{for every }m\in\mathcal S.$$
\end{pro}

\begin{proof}
Consider the exact sequence
$$ 0 \to \mathcal M \to \mathcal E\to \mathcal Q \to 0.$$
Since $\mathcal M$ is saturated in $\mathcal E$, the sheaf $\mathcal Q$ is torsion-free, and hence $F :=c_1(\mathcal Q)$ is pseudoeffective by Theorem \ref{thm:CP11}. From the above exact sequence, there exists a positive integer $\ell$ such that
\begin{equation}\label{eq:ell}
\OO_X\big(\ell(K_Y+\Delta)\big)\simeq \OO_Y(F)\otimes\mathcal M.
\end{equation}
The result follows from \eqref{eq:infmanym} and \eqref{eq:ell}.
\hfill $\Box$
\end{proof}

The following result implies Theorem \ref{thm:A}(i), since any tensor representation of a vector bundle can be embedded as a submodule in some high tensor power, see \cite[Chapter III, \S6.3 and \S7.4]{Bou98}.

\begin{thm} \label{thm:nonvanishingForms}
Assume the existence of good models for klt pairs in dimensions at most $n-1$. Let $X$ be a projective terminal variety of dimension $n$ with $K_X$ pseudoeffective. 
Let $\pi\colon Y\to X$ be a resolution of $X$. Assume that for some positive integer $p$ we have  
$$ H^0\big(Y,(\Omega^1_Y)^{\otimes p} \otimes \OO_Y(m\pi^*K_X)\big) \neq 0$$
for infinitely many $m$ such that $mK_X$ is Cartier. Then $\kappa (X,K_X) \geq 0$. 
\end{thm} 

\begin{proof} 
Let $\rho\colon X'\to X$ be a $\Q$-factorialisation of $X$, see \cite[Corollary 1.37]{Kol13}. Then $\rho$ is an isomorphism in codimension $1$, hence $K_{X'}=\rho^*K_X$ and $X'$ is terminal. By replacing $X$ by $X'$, we may thus assume that $X$ is $\Q$-factorial. 
%By passing to a $\mathbb Q-$factorialization, we may assume $X$ to be $\mathbb Q-$factorial. 
We first note that we have $K_X \not \equiv 0$, since otherwise $\kappa (X,K_X) = 0$ by \cite[Theorem 8.2]{Kaw85b}.

Let $\pi\colon Y\to X$ be a resolution of $X$. We apply Lemma \ref{lemmafund} with $\mathcal E = (\Omega^1_Y)^{\otimes p} $ and $\mathcal L = \pi^*\OO_X(m_0K_X)$, where $m_0$ is the smallest positive integer such that $m_0K_X$ is Cartier. Then there exist a positive integer $r$, 
a saturated line bundle $\mathcal M$ in $\bigwedge^r\mathcal E$, an infinite set $\mathcal S\subseteq\N$ and integral divisors $N_m\geq0$ for $m\in\mathcal S$ such that 
\begin{equation}\label{eq:infmany}
\OO_Y(N_m) \simeq \mathcal M\otimes \mathcal L^{\otimes m}.
\end{equation}
Then Proposition \ref{pro:quotient} implies that there exist a positive integer $\ell$ and a pseudoeffective divisor $F$ such that
\begin{equation}\label{eq:relation}
N_m+ F \sim mm_0\pi^*K_X+\ell K_Y.
\end{equation}
Noting that $\pi_*N_m$ is effective and that $\pi_*F$ is pseudoeffective, by pushing forward the relation \eqref{eq:relation} to $X$ we get
\begin{equation}\label{eq:rel2}
\pi_*N_m+\pi_*F\sim_\Q (mm_0+\ell)K_X.
\end{equation}
Now Theorem \ref{thm:MMPtwist3} gives a contradiction.
\hfill $\Box$
\end{proof} 

The same proof also shows the following variant. 

\begin{thm} \label{thm:nonvanishingForms2}
Assume the existence of good models for klt pairs in dimensions at most $n-1$. Let $X$ be a projective terminal variety of dimension $n$ with $K_X$ pseudoeffective. Assume that $\kappa(X,K_X) = - \infty$. Let $\pi\colon Y\to X$ be a resolution of $X$, and let $E_1,\dots,E_\ell$ be all $\pi$-exceptional prime divisors on $Y$. Then for all integers $\lambda _i$, for all $m$ sufficiently large such that $mK_X$ is Cartier and for all $q\geq0$ we have
$$\textstyle H^0\big(Y,\Omega^q_Y(\log\sum E_i) \otimes \OO_Y(m(\pi^*K_X+\sum\lambda_i E_i))\big)=0.$$
In particular,
$$\textstyle H^0\big(Y,\Omega^q_Y(\log\sum E_i) \otimes \OO_Y(mK_Y)\big)=0$$
for $m$ sufficiently large such that $mK_X$ is Cartier and $q\geq0$.
\end{thm}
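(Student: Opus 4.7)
The plan is to replay the proof of Theorem \ref{thm:nonvanishingForms} \emph{mutatis mutandis}, with $(\Omega^1_Y)^{\otimes p}$ replaced by $\mathcal{E} := \Omega^q_Y(\log \sum E_i)$ and $\pi^*K_X$ replaced by $D := \pi^*K_X + \sum \lambda_i E_i$. The case $q=0$ is immediate, since $H^0(Y,\mathcal O_Y(mD)) = H^0(X,\mathcal O_X(mK_X)) = 0$ by $S_2$-extension across the codimension $\geq 2$ image of the exceptional locus. For $q \geq 1$, two features of the logarithmic, exceptionally-twisted setting must be verified. First, Theorem \ref{thm:CP11} must still apply to torsion free quotients of the exterior powers of $\mathcal{E}$; since any such is a torsion free quotient of some tensor power of $\Omega^1_Y(\log \sum E_i)$, it suffices that $K_Y + \sum E_i$ be pseudoeffective. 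This holds because $X$ is terminal: writing $K_Y = \pi^*K_X + \sum a_i E_i$ with $a_i > 0$, one has $K_Y + \sum E_i = \pi^*K_X + \sum (a_i+1) E_i$, a sum of a pseudoeffective and an effective divisor. Second, since the $E_i$ are $\pi$-exceptional, both $\sum \lambda_i E_i$ and $\sum E_i$ vanish upon pushforward under $\pi$, so the Chern class computation
$$c_1\big(\textstyle\bigwedge^r \mathcal{E}\big) = \binom{\binom{n}{q}-1}{r-1} \binom{n-1}{q-1}\big(K_Y + \textstyle\sum E_i\big) =: \ell\big(K_Y + \textstyle\sum E_i\big)$$
pushes down to $\ell K_X$ on $X$, exactly as in the original argument.

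Argue by contradiction: assume there exist $q \geq 1$ and an infinite $\mathcal{T} \subseteq \Z$ with $H^0(Y, \mathcal{E} \otimes \OO_Y(mD)) \neq 0$ for all $m \in \mathcal{T}$, and note that $K_X \not\equiv 0$, else $\kappa(X,K_X)=0$ by \cite{Kaw85b}. To reduce to $\mathcal{T} \subseteq \N$, the $\PS(\mathcal{E})$ trick from the original proof relied on pseudoeffectivity of $\pi^*K_X$ and so does not apply here, since $D$ need not be pseudoeffective for general $\lambda_i$. Instead, I would apply the saturation argument to a \emph{single} injection $\OO_Y(-mD) \hookrightarrow \mathcal{E}$: its saturation $\OO_Y(-G) \supseteq \OO_Y(-mD)$ inside $\mathcal{E}$ satisfies $mD - G \geq 0$, and Theorem \ref{thm:CP11} applied to the torsion free quotient $\mathcal{E}/\OO_Y(-G)$ yields pseudoeffectivity of $\binom{n-1}{q-1}(K_Y+\sum E_i) + G$. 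Pushing both relations down to $X$ (using $\pi_*(mD)=mK_X$) and summing the resulting pseudoeffective classes yields pseudoeffectivity of $\big(\binom{n-1}{q-1} + m\big) K_X$; for $m \ll 0$ this forces $K_X$ to be anti-pseudoeffective, contradicting the pseudoeffectivity and nontriviality of $K_X$. Hence $\mathcal{T}$ has only finitely many negative elements.

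With $\mathcal{T} \subseteq \N$, the rest of the argument is a verbatim copy of the one in Theorem \ref{thm:nonvanishingForms}: let $\mathcal{F}$ be the image of $\bigoplus_{m\in \mathcal{T}}\OO_Y(-mD) \to \mathcal{E}$, of rank $r$, and let $\OO_Y(-F_Y)$ be the saturation of $\det \mathcal{F}$ inside $\bigwedge^r \mathcal{E}$. Theorem \ref{thm:CP11} gives pseudoeffectivity of $\tilde F_Y := \ell(K_Y + \sum E_i) + F_Y$, while infinitely many $r$-tuples $(m_1,\dots,m_r)$ whose associated map has rank $r$ produce an infinite set $\mathcal{S} \subseteq \N$ of sums $m = m_1+\cdots+m_r$ for which $mD - F_Y$ is effective. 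Pushing the resulting linear equivalences forward to $X$ produces
$$ N_{m+\ell} + F \sim_\Q (m+\ell) K_X \quad\text{for all } m \in \mathcal{S},$$
with $N_{m+\ell} \geq 0$ and $F$ pseudoeffective; Theorem \ref{thm:MMPtwist3} then forces $\kappa(X, K_X) \geq 0$, contradicting the assumption. The ``in particular'' part follows by applying this same argument with rational $\lambda_i := a_i$ (the proof is insensitive to integrality of $\lambda_i$ provided $m$ is divisible enough that $m \lambda_i \in \Z$), noting that then $m(\pi^*K_X + \sum a_i E_i) = m K_Y$. The main obstacle I foresee, beyond bookkeeping, lies in the reduction to $\mathcal{T} \subseteq \N$: the $\PS(\mathcal{E})$ trick of the original proof breaks down in the twisted setting and must be replaced by the single-injection saturation argument above.
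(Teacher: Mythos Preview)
Your proposal is correct and follows the paper's approach exactly—the paper's entire proof is the single sentence ``The same proof also shows the following variant.'' You go further by correctly noticing that the $\PS(\mathcal{E})$ reduction to $\mathcal{T}\subseteq\N$ does not transfer verbatim (since $D=\pi^*K_X+\sum\lambda_iE_i$ need not be pseudoeffective when some $\lambda_i<0$), and your single-injection saturation argument, pushed down to $X$, is a clean and valid replacement that the paper leaves implicit.
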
 

Now we are ready to state the first corollary of Theorem \ref{thm:nonvanishingForms}. The details are taken from \cite[Theorem 2.14]{DPS01}, but we include the proof for the benefit of the reader.

\begin{cor}\label{cor:nv}
Assume the existence of good models for klt pairs in dimensions at most $n-1$. Let $X$ be a projective terminal variety of dimension $n$ with $K_X$ pseudoeffective. Suppose that $K_X$ has a metric with algebraic singularities and semipositive curvature current. If $\chi(X,\OO_X)\neq0$, then $\kappa(X,K_X) \geq 0$. 
\end{cor}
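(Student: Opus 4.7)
\medskip

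\noindent\textbf{Proof plan.} The plan is to argue by contradiction: assume $\kappa(X,K_X)=-\infty$ and use Theorem \ref{thm:nonvanishingForms} together with the generalised hard Lefschetz theorem (Theorem \ref{thm:DPS}) to show that all cohomologies of a suitable line bundle on a resolution $Y$ vanish; a polynomial degeneration argument will then force $\chi(X,\OO_X)=0$, contradicting the hypothesis.

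First I would pick a positive integer $m_0$ so that $m_0K_X$ is Cartier and admits a singular hermitian metric $h$ with semipositive curvature current and algebraic singularities. After pulling back to a common log resolution $\pi\colon Y\to X$ of $X$ and of the singularities of $h$, and after replacing $h$ by a suitable tensor power (which is harmless because the $\lambda_j$ are rational), I may assume that the local weights of $h$ are $\sum\lambda_j\log|g_j|+O(1)$ with $\lambda_j\in\Z_{\geq 0}$ and that the $D_j=\{g_j=0\}$ form an SNC divisor on $Y$. Set $F=\sum\lambda_j D_j\geq 0$. Then for every positive integer $s$, the line bundle $\LL_s=\OO_Y(sm_0\pi^*K_X-sF)$ inherits from $h^{\otimes s}$ a singular hermitian metric with semipositive curvature current and trivial multiplier ideal.

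Next I would apply Theorem \ref{thm:DPS} to $\LL_s$, which yields the surjection
$$H^0\big(Y,\Omega_Y^{n-q}\otimes\LL_s\big)\twoheadrightarrow H^q\big(Y,K_Y+\LL_s\big)$$
for every $0\leq q\leq n$. For $0\leq q<n$ the antisymmetrisation embeds $\Omega_Y^{n-q}$ into $(\Omega_Y^1)^{\otimes(n-q)}$, and since $sF\geq 0$ we get
$$H^0(Y,\Omega_Y^{n-q}\otimes\LL_s)\subseteq H^0\big(Y,(\Omega_Y^1)^{\otimes(n-q)}\otimes\OO_Y(sm_0\pi^*K_X)\big),$$
and this last group vanishes for $s$ sufficiently divisible by Theorem \ref{thm:nonvanishingForms} (applied with $p=n-q\geq 1$), using the assumption $\kappa(X,K_X)=-\infty$. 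For $q=n$ the surjection becomes $H^0(Y,\LL_s)\twoheadrightarrow H^n(Y,K_Y+\LL_s)$, and a nonzero section of $\LL_s$ would, together with $sF\geq 0$, produce a nonzero section of $\OO_Y(sm_0\pi^*K_X)$ and hence give $\kappa(X,K_X)\geq 0$, a contradiction. Thus $H^q(Y,K_Y+s(m_0\pi^*K_X-F))=0$ for every $q$ and every sufficiently divisible $s$.

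Finally, by Hirzebruch--Riemann--Roch the function $s\mapsto\chi\big(Y,K_Y+s(m_0\pi^*K_X-F)\big)$ is a polynomial of degree at most $n$ in $s$, so vanishing for infinitely many $s$ forces it to vanish identically; evaluating at $s=0$ gives $\chi(Y,K_Y)=0$, hence $\chi(Y,\OO_Y)=0$ by Serre duality. Since terminal singularities are rational, we have $\chi(X,\OO_X)=\chi(Y,\OO_Y)=0$, contradicting the hypothesis. The main subtlety in executing the plan is the first step: arranging the integrality of the $\lambda_j$ and the SNC condition via an appropriate common log resolution, so that $\LL_s$ really carries a metric with trivial multiplier ideal and semipositive curvature to which Theorem \ref{thm:DPS} can be applied.
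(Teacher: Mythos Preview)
Your argument is essentially the paper's: apply Theorem~\ref{thm:DPS} together with Theorem~\ref{thm:nonvanishingForms} to kill all cohomology of $K_Y$ twisted by a one-parameter family of line bundles, then use polynomiality of $\chi$ and rationality of terminal singularities. The only difference is cosmetic---you subtract the divisor $F$ from the line bundle before invoking Theorem~\ref{thm:DPS}, whereas the paper keeps the original metric, carries the multiplier ideal $\mathcal I(h^{\otimes m})=\OO_Y\big({-}\sum\lfloor m\lambda_j\rfloor D_j\big)$ through, and unpacks it afterwards via Serre duality; the resulting vanishing statements are identical.

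Two small points to tighten. First, Theorem~\ref{thm:nonvanishingForms} requires $X$ to be $\Q$-factorial, so you should begin by passing to a $\Q$-factorialisation, as the paper does. Second, the genuine subtlety in your first step is not the integrality of the $\lambda_j$ or the SNC condition, but the claim that $\LL_s$ has \emph{semipositive} curvature: writing $\varphi=\sum\lambda_j\log|g_j|+\psi$ with $\psi$ bounded, you need $\Theta_h\geq[F]$ as currents, i.e.\ that the remainder $\psi$ is still plurisubharmonic. This follows from Siu's decomposition theorem (the generic Lelong number of $\Theta_h$ along each $D_j$ is exactly $\lambda_j$), but it is not automatic from boundedness and SNC alone. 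The paper's route via multiplier ideals sidesteps this issue entirely, since Theorem~\ref{thm:DPS} is applied directly to $h^{\otimes m}$.
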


\begin{proof} 
%Let $\rho\colon X'\to X$ be a $\Q$-factorialisation of $X$, see \cite[Corollary 1.37]{Kol13}. Then $\rho$ is an isomorphism in codimension $1$, hence $K_{X'}=\rho^*K_X$ and $X'$ is terminal. By replacing $X$ by $X'$, we may thus assume that $X$ is $\Q$-factorial. 
Again, we may assume that $X$ is $\Q$-factorial. 
Choose a resolution $\pi\colon Y\to X$ such that for some positive integer $\ell$ the divisors $\ell K_X$ and $\ell K_Y$ are Cartier, and there exists a metric $h$ with algebraic singularities on $\pi^*\OO_X(\ell K_X)$ as in \S\ref{subsec:metric}. Then the local plurisubharmonic weights $\varphi$ of $h$ are of the form
$$\varphi = \sum_{j=1}^r \lambda_j \log \vert g_j \vert + O(1),$$
where $\lambda_j$ are positive rational numbers and the divisors $D_j$ defined locally by $g_j$ form a simple normal crossing divisor on $Y$. We have
\begin{equation}\label{eq:metric1}
\textstyle\mathcal I(h^{\otimes m})=\OO_Y\big(-\sum_{j=1}^r\lfloor m\lambda_j\rfloor D_j\big).
\end{equation}

Assume that $\kappa(X,K_X)=-\infty$. Then by Theorem \ref{thm:nonvanishingForms}, for all $p\geq 0$ and for all $m$ sufficiently divisible we have
$$ H^0\big(Y,\Omega^p_Y \otimes \pi^*\OO_X(m\ell K_X)\big)=0,$$
and thus
$$H^0\big(Y,\Omega^p_Y\otimes \pi^*\OO_X(m\ell K_X)\otimes\mathcal I(h^{\otimes m})\big) = 0.$$
Theorem \ref{thm:DPS} implies that for all $p\geq 0$ and for all $m>0$ sufficiently divisible,
$$H^p\big(Y,\OO_Y(K_Y+m\ell\pi^* K_X)\otimes\mathcal I(h^{\otimes m})\big) = 0,$$
which together with \eqref{eq:metric1} and Serre duality yields 
\begin{equation}\label{eq:euler}
\textstyle\chi\big(Y,\OO_Y\big(\sum_{j=1}^r\lfloor m\lambda_j\rfloor D_j-m\ell\pi^* K_X\big)\big) = 0
\end{equation}
for all $m>0$ sufficiently divisible. There are integers $p_j$ and $q_j\neq0$ such that $\lambda_j=p_j/q_j$, and denote $q=\prod q_j$ and $D=\sum \frac{qp_j}{q_j} D_j-q\ell\pi^* K_X$. Then \eqref{eq:euler} implies
$$\chi(Y,\OO_Y(mD)) = 0\quad \text{for $m>0$ sufficiently divisible}.$$
But then $\chi(Y,\OO_Y(mD)) = 0$ for all integers $m$, and hence $\chi(Y,\OO_Y) = 0$. Since $X$ has rational singularities, this implies $\chi(X,\OO_X) = 0$, a contradiction which finishes the proof.
\hfill $\Box$
\end{proof} 

When $K_X$ is hermitian semipositive, the conclusion is much stronger.

\begin{cor}\label{cor:semipositive}
Assume the existence of good models for klt pairs in dimensions at most $n-1$. Let $X$ be a projective terminal variety of dimension $n$. Assume that there is a positive integer $m$ and a desingularisation $\pi\colon Y \to X$ such that $\pi^*\mathcal O_X(\ell K_X)$ has a singular metric $h$ with semipositive curvature current and vanishing Lelong numbers. If the numerical polynomial 
$$ P(m) = \chi\big(X,\mathcal O_X(m\ell K_X)\big)$$
is not identically zero, then $K_X$ is semiample. In particular, the result holds when $K_X$ is hermitian semipositive.
\end{cor}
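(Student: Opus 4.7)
The plan is to mimic the proof of Corollary \ref{cor:nv}, exploiting the stronger analytic hypothesis (vanishing Lelong numbers) to eliminate the multiplier ideal entirely, and then to combine the resulting nonvanishing $\kappa(X,K_X)\geq 0$ with the hermitian semipositivity input to upgrade it to semiampleness.

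First I would, as in the proof of Corollary \ref{cor:nv}, pass to a small $\Q$-factorialisation and thus assume that $X$ itself is $\Q$-factorial. Then I would invoke Skoda's integrability theorem: since every local plurisubharmonic weight $\varphi$ of $h$ satisfies $\nu(\varphi,y)=0$ for all $y\in Y$, one has $\mathcal I(h^{\otimes m})=\OO_Y$ for every positive integer $m$. This is the key simplification over Corollary \ref{cor:nv}, where $\mathcal I(h^{\otimes m})$ was only known to be the sheaf associated to an effective snc divisor.

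Next, I would argue by contradiction, assuming $\kappa(X,K_X)=-\infty$. Theorem \ref{thm:nonvanishingForms} then yields $H^0\bigl(Y,\Omega^p_Y\otimes\pi^*\OO_X(m\ell K_X)\bigr)=0$ for every $p\geq 0$ and every $m$ sufficiently divisible, and by the previous step this equals $H^0\bigl(Y,\Omega^p_Y\otimes\pi^*\OO_X(m\ell K_X)\otimes\mathcal I(h^{\otimes m})\bigr)$. Applying Theorem \ref{thm:DPS} to $\mathcal L=\pi^*\OO_X(m\ell K_X)$ with its metric $h^{\otimes m}$ gives $H^q\bigl(Y,\OO_Y(K_Y+m\ell\pi^*K_X)\bigr)=0$ for all $q\geq 0$ and such $m$. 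Taking Euler characteristics and applying Serre duality, one obtains $\chi\bigl(Y,\OO_Y(-m\ell\pi^*K_X)\bigr)=0$ for $m>0$ sufficiently divisible. Since $X$ has rational singularities and $\pi^*\OO_X(m\ell K_X)$ is locally free, the projection formula gives $\chi\bigl(Y,\pi^*\OO_X(m\ell K_X)\bigr)=\chi\bigl(X,\OO_X(m\ell K_X)\bigr)=P(m)$; the analogous identity for negative $m$ forces the polynomial $P$ to vanish identically, contradicting the hypothesis. Hence $\kappa(X,K_X)\geq 0$.

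The final step is to deduce semiampleness from $\kappa(X,K_X)\geq 0$. Under the hermitian semipositivity assumption, $K_X$ is nef, and by Lemma \ref{lem:Kappa=KappaSigma} it suffices to show $\kappa(X,K_X)=\nu(X,K_X)$. This is where the positivity of the smooth metric enters, together with the good models hypothesis in lower dimensions: one combines the Iitaka-fibration / canonical bundle formula approach of Proposition \ref{pro:contraction} with the semipositivity of the curvature to rule out the strict inequality $\kappa<\nu$, or else appeals directly to the relevant semiampleness results of \cite{DPS01,GM14} for nef line bundles carrying a smooth semipositive metric and admitting a section. I expect this last step to be the main obstacle: going from nonvanishing to full semiampleness is exactly where the hermitian semipositivity is indispensable, and the argument must be delicate enough to propagate the analytic positivity of $K_X$ through the Iitaka fibration in order to equate $\kappa$ with $\nu$.
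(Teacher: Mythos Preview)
Your argument for nonvanishing is essentially the paper's: trivialise the multiplier ideal via the vanishing Lelong numbers, invoke Theorem~\ref{thm:nonvanishingForms} and Theorem~\ref{thm:DPS}, and get $P(m)=0$ for infinitely many $m$, hence identically, contradicting the hypothesis. This part is fine.

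Where you diverge from the paper is in the final step, and you are overcomplicating it. You propose to prove $\kappa(X,K_X)=\nu(X,K_X)$ by pushing the semipositive metric through the Iitaka fibration via Proposition~\ref{pro:contraction}, and you flag this as ``the main obstacle''. In fact the paper does none of this: once $\kappa(X,K_X)\geq 0$ is established, it simply cites \cite[Theorem~5.1]{GM14}, which directly yields semiampleness for a canonical divisor admitting a metric with semipositive curvature current and trivial multiplier ideals (so the vanishing Lelong number hypothesis suffices, not just the hermitian semipositive special case). You do list \cite{GM14} among your options, so you had the right reference in hand; the point is that no further argument is required. Your alternative route via Proposition~\ref{pro:contraction} would not work as stated, since that proposition needs a fibration over a positive-dimensional base along which $K_X$ is not relatively big, and nothing in your setup produces such a fibration when $\kappa(X,K_X)=0$.
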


\begin{proof} 
We follow closely the proof of Corollary \ref{cor:nv}. We may assume that $X$ is $\Q$-factorial. By \cite[Theorem 5.1]{GM17}, it suffices to show that $\kappa (X,K_X) \geq 0$.
 
Arguing by contradiction, assume that $\kappa(X,K_X) = - \infty$. There exists a resolution $\pi\colon Y\to X$ such that for some positive integer $\ell$ the divisors $\ell K_X$ and $\ell K_Y$ are Cartier, and there exists a smooth hermitian semipositive metric $h$ on $\pi^*\OO_X(\ell K_X)$. Note that 
$$ \mathcal I(h^{\otimes m}) = \mathcal O_Y \quad\text{for all positive integers $m$.}$$
Then by Theorem \ref{thm:nonvanishingForms}, for all $p\geq 0$ and for all $m>0$ sufficiently divisible we have
$$ H^0\big(Y,\Omega^p_Y \otimes \pi^*\OO_X(m\ell K_X)\big)=0,$$
which together with Theorem \ref{thm:DPS} and Serre duality implies 
$$H^{n-p}\big(Y,\pi^*\OO_X(-m\ell K_X)\big) = 0,$$
hence $\chi\big(Y,\pi^*\OO_X(m\ell K_X)\big) = 0$ for all $m$. Since $X$ has rational singularities, we deduce
$$ \chi\big(X,\OO_X(m\ell K_X)\big) = 0\quad\text{for all $m$},$$
a contradiction. 
\hfill $\Box$
\end{proof}

\begin{rem}{\rm
The proof of Corollary \ref{cor:semipositive} actually gives more: if $K_X$ is not semiample, then for all $q$ and all $m$ sufficiently divisible we have
$$ H^q(X,\mathcal O_X(-m K_X)) = 0.$$}
\end{rem} 

\section{Semiampleness}\label{sec:thmC}

In this section we prove Theorem \ref{thm:C}(i) as a corollary of the following more general result.

\begin{thm} \label{thm:abundanceForms}
Assume the existence of good models for klt pairs in dimensions at most $n-1$. Let $(X,\Delta)$ be a  projective terminal pair of dimension $n$ such that $\kappa(X,K_X)\geq0$. Let $\pi\colon Y\to X$ be a sufficiently high resolution of $X$, fix a tensor representation $\mathcal E$ of $\Omega_Y^1(\log\lceil\pi^{-1}_*\Delta\rceil)$, and let $q$ be the rank of $\mathcal E$. 
If there exist $m_0 \geq 0$ such that $m_0(K_X+\Delta)$ is Cartier and such that
$$h^0\big(Y,\mathcal E\otimes \OO_Y(m_0\pi^*(K_X+\Delta))\big)\geq q+1,$$
then $(X,\Delta)$ has a good model.
\end{thm}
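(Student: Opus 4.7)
The plan is to argue by contradiction along the lines of the proof of Theorem \ref{thm:nonvanishingForms}, with Proposition \ref{pro:wedge} supplying the additional sections needed to work with a single value of $m$ rather than an infinite family.

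First, I would assume that some $m$ with $m(K_X+\Delta)$ Cartier satisfies $h^0(Y,\mathcal E\otimes\OO_Y(L))>q$, where $L=m\pi^*(K_X+\Delta)$, and let $\mathcal F\subseteq\mathcal E\otimes\OO_Y(L)$ be the subsheaf generated by global sections. Then $\mathcal F$ is a globally generated torsion-free sheaf of some rank $r\leq q$ with $h^0(Y,\mathcal F)>q\geq r$, so Proposition \ref{pro:wedge} yields $h^0(Y,\det\mathcal F)\geq 2$. The saturation $\OO_Y(D)$ of $\det\mathcal F$ inside $\bigwedge^r\mathcal E\otimes\OO_Y(rL)$ is then a line bundle with $\kappa(Y,D)\geq 1$; twisting the associated inclusion by $\OO_Y(-rL)$ and saturating in $\bigwedge^r\mathcal E$ produces an exact sequence
$$0\to\OO_Y(-F_Y)\to\bigwedge^r\mathcal E\to\mathcal Q\to 0$$
with $\mathcal Q$ torsion free, together with an effective divisor $N$ on $Y$ satisfying $D+F_Y+N\sim rm\pi^*(K_X+\Delta)$. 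Since $(X,\Delta)$ is terminal and $K_X+\Delta$ is pseudoeffective, the divisor $K_Y+\lceil\pi^{-1}_*\Delta\rceil$ is pseudoeffective, and Theorem \ref{thm:CP11}, applied via the embedding of $\bigwedge^r\mathcal E$ into a high tensor power of $\Omega^1_Y(\log\lceil\pi^{-1}_*\Delta\rceil)$, will show that $\tilde F_Y:=F_Y+c(K_Y+\lceil\pi^{-1}_*\Delta\rceil)$ is pseudoeffective, where $c\geq 0$ is the integer determined by the tensor representation.

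The remaining step is to apply Theorem \ref{thm:MMPtwist2}. Because $\kappa(X,K_X+\Delta)\geq 0$, the divisor $\pi^*(K_X+\Delta)$ is $\Q$-effective on $Y$, so adding any positive multiple of it does not decrease Kodaira dimension; together with $\kappa(Y,D)\geq 1$ this gives $\kappa(Y,m'\pi^*(K_X+\Delta)-F_Y)\geq 1$ for every integer $m'\geq rm$, which pushes forward to $\kappa(X,m'(K_X+\Delta)-\pi_*F_Y)\geq 1$ for all such $m'$. Substituting $F_Y=\tilde F_Y-c(K_Y+\lceil\pi^{-1}_*\Delta\rceil)$ with $\tilde F_Y$ pseudoeffective, and absorbing the exceptional divisor $E$ and the round-up correction $B=\lceil\pi^{-1}_*\Delta\rceil-\pi^{-1}_*\Delta$ appearing in the identity $K_Y+\lceil\pi^{-1}_*\Delta\rceil=\pi^*(K_X+\Delta)+E+B$, one produces a pseudoeffective $\Q$-divisor $F$ on $X$ and infinitely many $m'$ with $\kappa(X,m'(K_X+\Delta)-F)\geq 1$. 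Theorem \ref{thm:MMPtwist2} will then supply a good model for $(X,\Delta)$, contradicting the hypothesis.

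The hardest step will be this final bookkeeping: $F_Y$ itself is not known to be pseudoeffective (only $\tilde F_Y$ is), and the round-up correction $B$ descends to $B'=\lceil\Delta\rceil-\Delta$ on $X$ in a way that must be offset so that a \emph{single} pseudoeffective $\Q$-divisor $F$ on $X$ validates the hypothesis of Theorem \ref{thm:MMPtwist2} for the entire infinite family of multiples $m'$ produced above.
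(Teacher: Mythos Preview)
Your approach is correct and parallels the paper's, but you stop just short of resolving the bookkeeping you flag at the end. Here is how to close it. Since $\kappa(X,K_X)\geq0$, choose $D_0\geq0$ with $K_X\sim_\Q D_0$ and set $G=D_0+\Delta\sim_\Q K_X+\Delta$. Because $(X,\Delta)$ is terminal, every component of $\Delta$ has coefficient in $(0,1)$, so $\Supp B'=\Supp\Delta\subseteq\Supp G$ and there is a rational $\lambda>0$ with $cB'\leq\lambda G$. Pushing your relation on $Y$ down to $X$ gives
\[
\pi_*D \sim_\Q (rm+c)(K_X+\Delta)+cB'-\pi_*\tilde F_Y,
\]
hence $\pi_*D+(\lambda G-cB')\sim_\Q (rm+c+\lambda)(K_X+\Delta)-\pi_*\tilde F_Y$ with $\lambda G-cB'\geq0$. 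By Lemma~\ref{lem:pushforward} and the fact that $\kappa(X,K_X+\Delta)\geq0$, you obtain $\kappa\big(X,m''(K_X+\Delta)-\pi_*\tilde F_Y\big)\geq1$ for all integers $m''\gg0$. Since $F:=\pi_*\tilde F_Y$ is pseudoeffective, Theorem~\ref{thm:MMPtwist2} applies to $(X,\Delta)$ and yields the contradiction. (A minor point: your divisor $N$ is zero, since saturation commutes with twisting by a line bundle, so $\OO_Y(D-rL)$ is already saturated in $\bigwedge^r\mathcal E$.)

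The paper reaches the same endpoint by a different route: rather than pushing down to $X$ and absorbing $B'$ a posteriori, it uses $\kappa(X,K_X)\geq0$ \emph{at the outset} to replace $(X,\Delta)$ by a log smooth pair $(Y,\Gamma_Y)$ with $\Gamma_Y$ reduced and $K_Y+\Gamma_Y\sim_\Q D_Y$, $\Supp\Gamma_Y=\Supp D_Y$. This makes the round-up issue disappear entirely, and at the end one perturbs to the klt pair $(Y,\Gamma_Y-\delta D_Y)$ and applies Theorem~\ref{thm:MMPtwist2} directly on $Y$. Your argument trades that preliminary reduction for a computation on $X$ after the fact; both hinge on the same use of the hypothesis $\kappa(X,K_X)\geq0$.
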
 

\begin{proof} 
\emph{Step 1.}
We argue by contradiction, i.e.\ we assume that $(X,\Delta)$ does not have a good model. By passing to a $\Q$-factorialisation, we may assume that $X$ is $\Q$-factorial. Since $\kappa(X,K_X)\geq0$, there exists a $\Q$-divisor $D\geq0$ such that $K_X\sim_\Q D$. For a rational number $0<\varepsilon\ll1$, denote $\Delta'=(1+\varepsilon)\Delta+\varepsilon D$ and $D'=(1+\varepsilon)(D+\Delta)$, so that $\lfloor\Delta'\rfloor=0$, $D'\geq0$, $\Supp\Delta'=\Supp D'$, the pair $(X,\Delta')$ is terminal, and we have 
$$(1+\varepsilon)(K_X+\Delta)\sim_\Q K_X+\Delta'\sim_\Q D'.$$
Let $\pi\colon Y\to X$ be a log resolution of $(X,\Delta')$, and write 
$$K_Y+\Gamma=\pi^*(K_X+\Delta')+E\sim_\Q \pi^*D'+E,$$
where $\Gamma$ and $E$ are effective and have no common components. Then the pair $(Y,\Gamma)$ does not have a good model by \cite[Lemma 2.10]{HX13}. Note that $\Supp\Gamma\subseteq\Supp(\pi^*D'+E)$ since $(X,\Delta')$ is terminal, and let $\Gamma_Y=(\pi^*D'+E)_{\textrm{red}}$. Denoting $D_Y=\pi^*D'+E+\Gamma_Y-\Gamma\geq0$, we have
\begin{equation}\label{eq:support}
K_Y+\Gamma_Y\sim_\Q D_Y\quad\text{and}\quad \Supp \Gamma_Y=\Supp D_Y=\Supp(\pi^*D'+E).
\end{equation}
Then the pair $(Y,\Gamma_Y)$ is dlt, and $\kappa(Y,K_Y+\Gamma_Y)=\kappa(Y,K_Y+\Gamma)$ and $\nu(Y,K_Y+\Gamma_Y)=\nu(Y,K_Y+\Gamma)$ by \cite[Lemma 2.9]{DL15}, hence 
$$\kappa(Y,K_Y+\Gamma_Y)\neq\nu(Y,K_Y+\Gamma_Y)$$ 
by Lemma \ref{lem:Kappa=KappaSigma}. Let $\mathcal E'$ be a tensor representation of $\Omega_Y^1(\log\Gamma_Y)$ corresponding to $\mathcal E$. Observing that $\lceil\pi^*\Delta\rceil\leq\Gamma_Y$, we have an inclusion
$$\mathcal E\otimes \OO_Y(m_0\pi^*(K_X+\Delta))\to \mathcal E' \otimes \OO_Y(m_0(K_Y+\Gamma_Y)),$$
hence 
$$ h^0\big(Y,\mathcal E' \otimes \OO_Y(m_0(K_Y+\Gamma_Y))\big)\geq q+1.$$

\medskip

\emph{Step 2.}
Let $\mathcal F$ be the subsheaf of $\mathcal E' \otimes \OO_Y(m_0(K_Y+\Gamma_Y))$ generated by its global sections, and let $r$ be the rank of $\mathcal F$. Then
$$\det\mathcal F\subseteq\bigwedge^r\mathcal E' \otimes\OO_Y(rm_0(K_Y+\Gamma_Y)),$$
and there exists a Cartier divisor $N$ such that $\OO_Y(N)$ is the saturation of $\det\mathcal F$ in $\bigwedge^r\mathcal E'\otimes\OO_Y(rm_0(K_Y+\Gamma_Y))$. By Proposition \ref{pro:wedge} we have
\begin{equation}\label{eq:infmany2}
h^0(Y,N)\geq2,
\end{equation}
and denote 
\begin{equation}\label{eq:18}
{-}F_Y=N-rm_0(K_Y+\Gamma_Y).
\end{equation}
Then we have the exact sequence
$$ 0 \to \OO_Y(-F_Y) \to \bigwedge^r\mathcal E' \to \mathcal Q \to 0.$$
Since $\OO_Y(-F_Y)$ is saturated in $\bigwedge^r\mathcal E'$, the sheaf $\mathcal Q$ is torsion-free, and hence $F=c_1(\mathcal Q)$ is pseudoeffective by Theorem \ref{thm:CP11}. From the above exact sequence, there exists a positive integer $\ell$ such that $\ell (K_Y+\Gamma_Y)\sim F-F_Y$. 

Let $d$ be the smallest positive integer such that $H^0(Y,d(K_Y+\Gamma_Y))\neq0$. Denote $\mathcal S=\{rm_0+id\mid i\in\N\}\subseteq\Z$ and 
$$N_{m+\ell}=N+(m-rm_0)(K_Y+\Gamma_Y)\quad\text{for }m\in\mathcal S.$$ 
From \eqref{eq:infmany2}, for every $m\in\mathcal S$ we have $\kappa(Y,N_{m+\ell})\geq1$, and \eqref{eq:18} gives
$$N_{m+\ell}+F\sim (m+\ell) (K_Y+\Gamma_Y).$$
For a rational number $0<\delta\ll1$, denote $\Gamma_Y'=\Gamma_Y-\varepsilon D_Y$, and note that $\lfloor\Gamma_Y'\rfloor=0$ by \eqref{eq:support}. Therefore, the pair $(Y,\Gamma_Y')$ is klt by \cite[Proposition 2.41]{KM98}. We have $(1-\delta)(K_Y+\Gamma_Y)\sim_\Q K_Y+\Gamma_Y'$ by \eqref{eq:support}, and therefore
$$\textstyle\frac{1}{1-\delta} N_{m+\ell}+\frac{1}{1-\delta} F\sim_\Q (m+\ell)(K_Y+\Gamma_Y').$$
Now Theorem \ref{thm:MMPtwist2} gives a contradiction. 
\hfill $\Box$
\end{proof} 

\begin{cor}\label{cor:chi}
Assume the existence of good models for klt pairs in dimensions at most $n-1$. Let $(X,\Delta)$ be a $\Q$-factorial projective terminal pair of dimension $n$ such that $\kappa(X,K_X)\geq0$. 
Suppose that 
$$ h^q(X,\OO_X) > \binom{n}{q}\quad\text{for some }q.$$
Then $(X,\Delta)$ has a good model. In particular, if $\vert\chi(X,\OO_X)\vert > 2^{n-1}$, then $(X,\Delta)$ has a good model. 
\end{cor}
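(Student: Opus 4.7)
The plan is to deduce the corollary directly from Theorem~\ref{thm:abundanceForms} applied to the exterior powers of the log cotangent bundle, and then translate the resulting bounds on $h^0(Y,\Omega_Y^q)$ into bounds on $h^q(X,\OO_X)$ via Hodge theory and the rationality of terminal singularities. First I would fix a sufficiently high log resolution $\pi\colon Y\to X$ of $(X,\Delta)$ as in Theorem~\ref{thm:abundanceForms} and, for each $q\in\{0,1,\dots,n\}$, use the tensor representation
$$\mathcal{E}\equ\Omega_Y^q\big(\log\lceil\pi^{-1}_*\Delta\rceil\big)\equ\textstyle\bigwedge^q\Omega_Y^1\big(\log\lceil\pi^{-1}_*\Delta\rceil\big)$$
of the log cotangent bundle, which has rank $\binom{n}{q}$. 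Applying Theorem~\ref{thm:abundanceForms} with $m=0$ (so that the requirement that $m(K_X+\Delta)$ be Cartier is trivial) gives
$$h^0\big(Y,\Omega_Y^q(\log\lceil\pi^{-1}_*\Delta\rceil)\big)\dleq \binom{n}{q}.$$

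The natural inclusion $\Omega_Y^q\hookrightarrow\Omega_Y^q(\log\lceil\pi^{-1}_*\Delta\rceil)$ then yields $h^0(Y,\Omega_Y^q)\leq \binom{n}{q}$. Hodge symmetry on the smooth projective variety $Y$ identifies $h^0(Y,\Omega_Y^q)$ with $h^q(Y,\OO_Y)$, and since terminal singularities are rational, $R\pi_*\OO_Y=\OO_X$, which forces $h^q(X,\OO_X)=h^q(Y,\OO_Y)$. Combining these steps,
$$h^q(X,\OO_X)\dleq \binom{n}{q} \quad\text{for all }q,$$
as claimed.

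For the bound on $|\chi(X,\OO_X)|$, decompose
$$\chi(X,\OO_X)\equ\sum_{q \text{ even}} h^q(X,\OO_X)\;-\;\sum_{q \text{ odd}} h^q(X,\OO_X).$$
Each of the two sums is nonnegative and is bounded by $\sum_{q \text{ even}}\binom{n}{q}=\sum_{q \text{ odd}}\binom{n}{q}=2^{n-1}$, so their difference is at most $2^{n-1}$ in absolute value.

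There is no real obstacle here beyond correctly identifying the ingredients: the deep input is Theorem~\ref{thm:abundanceForms}, and the rest is a clean combination of Hodge theory, rationality of terminal singularities, and the two standard binomial identities above. The only small point worth flagging is that $\Omega_Y^q(\log D)$ is indeed admissible as a tensor representation in the sense of Theorem~\ref{thm:abundanceForms}, which it is, being the $q$-th exterior power of $\Omega_Y^1(\log D)$.
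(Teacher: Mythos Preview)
Your proof is correct and follows essentially the same approach as the paper: apply Theorem~\ref{thm:abundanceForms} with $m=0$ and $\mathcal E=\Omega_Y^q(\log\lceil\pi^{-1}_*\Delta\rceil)$, then pass to $h^q(X,\OO_X)$ via Hodge symmetry and the rationality of terminal singularities. Your write-up is in fact more careful than the paper's, which jumps directly to the bound on $h^0(Y,\Omega_Y^q)$ without making explicit the intermediate step through the log differentials and the inclusion $\Omega_Y^q\hookrightarrow\Omega_Y^q(\log\lceil\pi^{-1}_*\Delta\rceil)$.
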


\begin{proof}
Let $\pi\colon Y\to X$ be a sufficiently high resolution of $X$. Applying Theorem \ref{thm:abundanceForms} with $m = 0$ we obtain
$$ h^0\big(Y,\Omega^q_Y) > \binom{n}{q}\quad\text{for all }q,$$ 
hence the first statement follows from Hodge symmetry, since $X$ has rational singularities.
\hfill $\Box$
\end{proof}

\section{Numerical dimension 1}\label{sec:nd1}

In this section we show that some of the previous results hold unconditionally in every dimension, if one assumes that the numerical dimension of the canonical class is $1$. The following is the key technical observation.

\begin{thm} \label{thm:nu1a}
Let $X$ be a projective $\Q$-factorial variety of dimension $n$, and let $L$ be a nef divisor on $X$ such that $\nu(X,L)=1$. Assume that there exist a pseudoeffective $\Q$-divisor $F$ and a non-zero $\Q$-divisor $D \geq 0$ on $X$ such that 
$$D+F\sim_\Q L.$$
Then there exists a $\Q$-divisor $E\geq0$ such that 
$$L\equiv E\quad\text{and}\quad \kappa(X,E)\geq\kappa(X,D).$$
\end{thm}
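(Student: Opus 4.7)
The plan is to reduce to showing that $F$ is numerically equivalent to a $\Q$-effective divisor $F'$: then $E := D + F'$ will be effective, satisfy $E \equiv L$, and have $\kappa(X,E) \geq \kappa(X,D)$ because $E \geq D$ as $\Q$-divisors. First, I would fix an ample divisor $H$ on $X$ and exploit that $L$ is nef with $\nu(X,L) = 1$, which gives $L^2 \cdot H^{n-2} = 0$. Intersecting $D + F \sim_\Q L$ with $L \cdot H^{n-2}$ and using the nonnegativity of $L \cdot D \cdot H^{n-2}$ ($L$ nef, $D$ effective) and $L \cdot F \cdot H^{n-2}$ ($L$ nef, $F$ pseudoeffective) forces both to vanish. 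Next, since $L - F \equiv D \geq 0$, the monotonicity of the $\sigma$-multiplicities in the pseudoeffective order gives $\sigma_\Gamma(F) \leq \sigma_\Gamma(L) = 0$ for every prime divisor $\Gamma$, whence $N_\sigma(F) = 0$; similarly $\nu(X,F) \leq \nu(X,L) = 1$.

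If $\nu(X,F) = 0$ then $F \equiv N_\sigma(F) = 0$ by Nakayama, so $L \equiv D$ and $E := D$ suffices. The substantive case is $\nu(X,F) = 1$. After replacing $X$ by a resolution (admissible, since all hypotheses pull back and a suitable $E$ can be pushed forward via Lemma \ref{lem:pushforward}), I would assume $X$ is smooth and then restrict everything to a general complete intersection surface $S = H_1 \cap \cdots \cap H_{n-2}$ cut out by very ample $H_i$. On $S$, the classes $L|_S$ and $F|_S$ are both pseudoeffective, non-numerically-trivial (the former because $L \cdot H^{n-1} > 0$; the latter because $\nu(X,F) > 0$ forces $F \cdot H^{n-1} > 0$), and satisfy $(L|_S)^2 = 0$ and $L|_S \cdot F|_S = 0$. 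The Hodge index theorem on $S$ gives $(F|_S)^2 \leq 0$, while Boucksom's characterization of the movable cone as $\{P \in \Effb(X) : N_\sigma(P) = 0\}$, combined with the standard fact that movable classes have nonnegative self-intersection against $H^{n-2}$ (via Bertini on a general representative), gives $F^2 \cdot H^{n-2} \geq 0$. Since $(F|_S)^2 = F^2 \cdot H^{n-2}$ by the projection formula, both inequalities are equalities.

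Lemma \ref{lem:hodge} applied to $L|_S$ and $F|_S$ then yields $F|_S \equiv \beta\, L|_S$ for some $\beta \in \Q_{>0}$, and the injectivity $N^1(X)_\Q \hookrightarrow N^1(S)_\Q$ (either via Hodge--Riemann non-degeneracy of the form $\alpha \mapsto \alpha \cdot H^{n-2}$ on $N^1(X)_\R$, or via iterated Lefschetz hyperplane) lifts this to $F \equiv \beta\, L$ on $X$. Then $D \equiv (1 - \beta) L$, and since $D$ is a non-zero effective divisor and $L \cdot H^{n-1} > 0$, we must have $\beta < 1$; setting $E := \frac{1}{1 - \beta} D$ gives $E \geq 0$, $E \equiv L$, and $\kappa(X,E) = \kappa(X,D)$. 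The main technical obstacle is establishing the two opposite inequalities $(F|_S)^2 \leq 0$ and $F^2 \cdot H^{n-2} \geq 0$ simultaneously, which pin $F$ down to the ray through $L$; the lower bound, which rests on Boucksom's description of the movable cone via the divisorial Zariski decomposition, is the more delicate ingredient.
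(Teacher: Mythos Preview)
Your overall strategy---restrict to a general complete-intersection surface, use the Hodge index theorem to force proportionality with $L$, then lift back via Lefschetz---is exactly the paper's approach. The genuine gap is the claim that $N_\sigma(F)=0$. The asserted ``monotonicity of $\sigma$-multiplicities in the pseudoeffective order'' is false: subadditivity gives only $\sigma_\Gamma(L)=\sigma_\Gamma(F+D)\le\sigma_\Gamma(F)+\sigma_\Gamma(D)$, which points the wrong way. Concretely, on a K3 surface with an elliptic fibration with fibre class $f$ and a reducible fibre $f_0=C+C'$ of type $I_2$, take $L=f$, $D=C'$, $F=C$. Then $\nu(X,L)=1$, $D\ge0$ is non-zero, and $F$ is pseudoeffective, but $N_\sigma(F)=C\ne0$ since $C^2=-2$; here $\sigma_C(F)=1>0=\sigma_C(L)$. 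So you cannot assume $F$ is movable, and the subsequent inequality $F^2\cdot H^{n-2}\ge0$ (which you justify via movability and ``Bertini on a general representative'') is not available.

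The paper fixes this by never claiming $F$ is movable. After passing to a resolution it takes the Nakayama--Zariski decomposition $F=P_\sigma(F)+N_\sigma(F)$ and runs your Hodge-index argument on $P=P_\sigma(F)$ instead of on $F$, using the specific fact (Nakayama, Remark III.2.8 and the paragraph after Corollary V.1.5) that $P|_S$ is \emph{nef} on a very general complete-intersection surface $S$; this is the correct substitute for your ``movable classes have nonnegative self-intersection''. One obtains $P\equiv\lambda L$ with $\lambda<1$, hence $L\equiv\frac{1}{1-\lambda}\big(N_\sigma(F)+D\big)$, and the negative part $N_\sigma(F)$ is absorbed into $E$ rather than shown to vanish. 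A short rationality argument is then needed because $P_\sigma(F)$, and hence $\lambda$, may be irrational. Your case $\nu(X,F)=0$ needs the same adjustment: from $\nu(X,F)=0$ one gets only $F\equiv N_\sigma(F)$, not $F\equiv0$, and again $N_\sigma(F)$ is folded into $E$.
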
 

\begin{proof}
Let $f\colon Y\to X$ be a resolution of $X$, and denote $L'=f^*L$, $D'=f^*D$ and $F'=f^*F$, so that $D'+F'\sim_\Q L'$. Let $P=P_\sigma(F')$ and $N=N_\sigma(F')\geq0$, so that we have the Nakayama-Zariski decomposition
$$F' = P + N. $$

Assume first that $P \not\equiv 0$. Let $S$ be a surface in $Y$ cut out by $n-2$ general hyperplane sections. Then $P|_S$ is nef by \cite[Remark III.2.8 and paragraph after Corollary V.1.5]{Nak04}, and in particular
\begin{equation}\label{eq:restrictionNef}
(P|_S)^2\geq0.
\end{equation}
On the other hand, since $\nu(Y,L')=1$, we have
$$0=(L'|_S)^2=L'|_S\cdot P|_S+L'|_S\cdot N|_S+L'|_S\cdot D'|_S,$$
hence 
$$L'|_S\cdot P|_S=L'|_S\cdot N|_S=L'|_S\cdot D'|_S=0.$$
Now the Hodge index theorem implies $(P|_S)^2\leq0$, and hence $(P|_S)^2=0$ by \eqref{eq:restrictionNef}. Then Lemma \ref{lem:hodge} yields $P|_S\equiv\lambda L'|_S$ for some real number $\lambda > 0$, and hence $P\equiv \lambda L'$ by the Lefschetz hyperplane section theorem. Note that $D' \neq 0$ implies $\lambda < 1$. Therefore, setting 
$$E'=\frac{1}{1-\lambda}(N+D')-\varepsilon D'\geq0$$
for a rational number $0<\varepsilon\ll1$, we obtain
$$L'-\varepsilon D'\equiv E'.$$
Let $E_1,\dots,E_r$ be the components of $E'$ and let $\pi\colon \Div_\R(Y)\to N^1(Y)_\R$ be the standard projection. Then $\pi^{-1}\big(\pi(L'-\varepsilon D')\big)\cap\sum\R_+E_i$ is a rational affine subspace of $\sum\R E_i\subseteq\Div(Y)_\R$ which contains $E'$, hence there exists a rational point 
$$0\leq E''\in \pi^{-1}\big(\pi(L'-\varepsilon D')\big)\cap\sum\R_+E_i.$$ 
Setting $E=f_*(E''+\varepsilon D')$, we have $L\equiv E$ and $E\geq\varepsilon D$, which proves the result in the case $P\not\equiv0$.

If $P\equiv 0$, denote $E'=N+(1-\varepsilon) D'\geq0$ for a rational number $0<\varepsilon\ll1$, so that $L'-\varepsilon D'\equiv E'$. We conclude as above.
\hfill $\Box$
\end{proof}

\begin{cor}\label{cor:nu1}
Let $(X,\Delta)$ be a $\Q$-factorial projective klt pair such that $K_X+\Delta$ is nef and $\nu(X,K_X+\Delta)=1$. Assume that there exist a pseudoeffective $\Q$-divisor $F$ and a non-zero $\Q$-divisor $D \geq 0$ on $X$ such that $K_X+\Delta\sim_\Q D+F$. Then $\kappa(X,K_X+\Delta)\geq\kappa(X,D)$.
\end{cor}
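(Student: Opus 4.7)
The plan is to apply Theorem \ref{thm:nu1a} directly with $L = K_X+\Delta$, obtaining an effective $\Q$-divisor $E \geq 0$ with $K_X+\Delta \equiv E$ and $\kappa(X, E) \geq \kappa(X, D)$. The task is then to upgrade this numerical equivalence to the inequality $\kappa(X, K_X+\Delta) \geq \kappa(X, E)$, which yields the corollary.

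For a sufficiently small rational $\varepsilon > 0$, the pair $(X, \Delta + \varepsilon E)$ is klt, and
\[
K_X + \Delta + \varepsilon E \equiv (1+\varepsilon)(K_X+\Delta)
\]
is nef with $\nu = 1$. Since $\varepsilon E$ is effective, one has $\kappa(X, K_X + \Delta + \varepsilon E) \geq \kappa(X, E)$, so the crux is to transfer this lower bound back to $K_X+\Delta$ itself. Note that $\nu(X, E) = \nu(X, K_X+\Delta) = 1$, so in particular $\kappa(X, E) \in \{0,1\}$, and I would split the argument accordingly.

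When $\kappa(X, E) = 1$, we have $\kappa(X, E) = \nu(X, E)$ so $E$ is abundant. Passing to a resolution $\pi\colon Y \to X$, one uses the Iitaka fibration of $\pi^*E$ to a smooth curve $C$; for a very general fibre $F$, $\pi^*E|_F$ is numerically trivial, and hence so is $\pi^*(K_X+\Delta)|_F$. By adjunction the induced pair on $F$ is klt, so Nakayama's unconditional abundance theorem for klt pairs of numerical dimension zero yields $\pi^*(K_X+\Delta)|_F \sim_\Q 0$. A standard descent argument then produces a rational fibration $g \colon X \dashrightarrow C$ and a $\Q$-divisor $B$ on $C$ with $K_X+\Delta \sim_\Q g^*B$; since $\nu(K_X+\Delta) = 1$ forces $\deg B > 0$, we obtain $\kappa(X, K_X+\Delta) = 1$ as required.

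The principal obstacle is the case $\kappa(X, E) = 0$, where we need only $\kappa(X, K_X+\Delta) \geq 0$: the Iitaka-fibration approach does not apply, and one must show directly that $K_X+\Delta$ is $\Q$-linearly effective despite only being numerically equivalent to the effective $E$. This requires a more careful analysis exploiting the klt hypothesis and the specific construction of $E$ in the proof of Theorem \ref{thm:nu1a}, bridging the gap between numerical equivalence and $\Q$-linear equivalence in $\Pic(X) \otimes \Q$.
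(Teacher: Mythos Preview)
Your proposal has a genuine gap: you explicitly leave the case $\kappa(X,E)=0$ unfinished, calling it ``the principal obstacle'' and promising only ``a more careful analysis''. But this case is the entire content of the step you are trying to prove --- passing from $K_X+\Delta\equiv E\geq0$ to $\kappa(X,K_X+\Delta)\geq0$ --- and nothing in Theorem~\ref{thm:nu1a} or its proof gives you $\Q$-linear (as opposed to numerical) effectivity of $K_X+\Delta$. So as written the argument is incomplete.

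The paper closes this gap in one line by invoking \cite[Theorem~0.1]{CKP12}: for a klt (indeed log canonical) pair $(X,\Delta)$ and any effective $\Q$-divisor $E$ with $K_X+\Delta\equiv E$, one has $\kappa(X,K_X+\Delta)\geq\kappa(X,E)$. This single result handles both of your cases $\kappa(X,E)=0$ and $\kappa(X,E)=1$ simultaneously, and is exactly the ``bridge between numerical equivalence and $\Q$-linear equivalence'' you were seeking. Your Iitaka-fibration sketch for $\kappa(X,E)=1$ is also shaky (the descent step producing $K_X+\Delta\sim_\Q g^*B$ with genuine $\Q$-linear equivalence is not a formality), but once you cite CKP12 none of that is needed.
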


\begin{proof}
By Theorem \ref{thm:nu1a} applied to $L=K_X+\Delta$, there exists an effective $\Q$-divisor $E$ on $X$ such that $K_X+\Delta\equiv E$ and $\kappa(X,E)\geq\kappa(X,D)$. By \cite[Theorem 0.1]{CKP12} we have $\kappa(X,K_X+\Delta)\geq\kappa(X,E)$, and the result follows.
\hfill $\Box$
\end{proof}

Theorem \ref{thm:A}(ii) is now a special case of:

\begin{thm} \label{thm:nonvanishingFormsnu1}
Let $X$ be a minimal projective terminal variety of dimension $n$. Assume that $\nu(X,K_X)=1$.
Let $\pi\colon Y\to X$ be a resolution of $X$ and assume that there is a positive integer $p$ such that 
$$ H^0\big(Y,(\Omega^1_Y)^{\otimes p} \otimes \OO_Y(m\pi^*K_X)\big) \ne 0$$
for infinitely many $m$ such that $mK_X$ is Cartier. Then $\kappa(X,K_X) \geq 0$.
\end{thm} 

\begin{proof}
The proof is the same as the proof of Theorem \ref{thm:nonvanishingForms}, by invoking Corollary \ref{cor:nu1} instead of Theorem \ref{thm:MMPtwist3}.
\hfill $\Box$
\end{proof}

\begin{cor}\label{cor:nd1}
Let $X$ be a minimal  projective terminal variety of dimension $n$ such that $\nu(X,K_X) = 1$. Assume that $K_X$ has a singular metric with algebraic singularities and semipositive curvature current. If $\chi(X,\OO_X)\neq0$, then $\kappa(X,K_X)\geq0$. In particular, the result holds if $K_X$ is hermitian semipositive.
\end{cor}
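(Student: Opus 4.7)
The plan is to follow \emph{mutatis mutandis} the proof of Corollary \ref{cor:nv}, replacing the conditional Theorem \ref{thm:nonvanishingForms} by the unconditional Theorem \ref{thm:nonvanishingFormsnu1}, whose hypotheses match ours since $X$ is minimal and $\nu(X,K_X)=1$. The hermitian semipositive case is then immediate, since a smooth hermitian metric trivially has algebraic singularities (all $\lambda_j=0$), so I focus on the main statement.

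Assume for contradiction that $\kappa(X,K_X)=-\infty$. As in the proof of Corollary \ref{cor:nv}, choose a resolution $\pi\colon Y\to X$ and a positive integer $\ell$ such that $\ell K_X$ and $\ell K_Y$ are Cartier, and such that there is a singular hermitian metric $h$ on $\pi^*\OO_X(\ell K_X)$ whose local plurisubharmonic weights have the form $\varphi=\sum_{j=1}^r\lambda_j\log|g_j|+O(1)$, with positive rational $\lambda_j$ and with the divisors $D_j$ locally defined by $g_j$ forming a simple normal crossing divisor on $Y$; thus
$$\textstyle\mathcal I(h^{\otimes m})=\OO_Y\bigl({-}\sum_{j=1}^r\lfloor m\lambda_j\rfloor D_j\bigr).$$

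Since $X$ is minimal with $\nu(X,K_X)=1$, Theorem \ref{thm:nonvanishingFormsnu1} yields
$$H^0\bigl(Y,(\Omega_Y^1)^{\otimes p}\otimes\pi^*\OO_X(m\ell K_X)\bigr)=0$$
for every $p\geq 1$ and every $m\neq 0$ sufficiently divisible. Since each $\Omega_Y^p$ is a direct summand of $(\Omega_Y^1)^{\otimes p}$, the same vanishing holds for $\Omega_Y^p$, and persists after tensoring with the ideal $\mathcal I(h^{\otimes m})\subseteq\OO_Y$. The hard Lefschetz theorem (Theorem \ref{thm:DPS}) applied to $\pi^*\OO_X(\ell K_X)^{\otimes m}$ then gives
$$H^p\bigl(Y,\OO_Y(K_Y+m\ell\pi^*K_X)\otimes\mathcal I(h^{\otimes m})\bigr)=0$$
for every $p\geq 0$ and every $m>0$ sufficiently divisible. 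Combining this with Serre duality and the explicit description of $\mathcal I(h^{\otimes m})$ yields
$$\textstyle\chi\bigl(Y,\OO_Y\bigl(\sum_{j=1}^r\lfloor m\lambda_j\rfloor D_j-m\ell\pi^*K_X\bigr)\bigr)=0$$
for all $m>0$ sufficiently divisible. Writing $\lambda_j=p_j/q_j$ and setting $q=\prod q_j$ and $D=\sum\frac{qp_j}{q_j}D_j-q\ell\pi^*K_X$ exactly as in the proof of Corollary \ref{cor:nv}, this gives $\chi(Y,\OO_Y(mD))=0$ on a cofinal set of integers. Since $\chi(Y,\OO_Y(mD))$ is a numerical polynomial in $m$, it must vanish identically, so $\chi(Y,\OO_Y)=0$, and by the rationality of the singularities of $X$ we conclude $\chi(X,\OO_X)=0$, a contradiction.

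No genuine obstacle arises: the proof is a direct transcription of that of Corollary \ref{cor:nv}. The key advantage of the numerical dimension $1$ setting is that Theorem \ref{thm:nonvanishingFormsnu1} is available unconditionally, since the MMP-with-a-twist input used in Theorem \ref{thm:nonvanishingForms} (namely Theorem \ref{thm:MMPtwist3}) is replaced there by the Hodge-index-theoretic input of Theorem \ref{thm:nu1a} and Corollary \ref{cor:nu1}.
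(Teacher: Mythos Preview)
Your proof is correct and follows essentially the same approach as the paper: the paper's proof of Corollary \ref{cor:nd1} is the single sentence ``The proof is the same as that of Corollary \ref{cor:nv}, by invoking Theorem \ref{thm:nonvanishingFormsnu1} instead of Theorem \ref{thm:nonvanishingForms},'' and you have written this out in full.
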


\begin{proof}
The proof is the same as that of Corollary \ref{cor:nv}, by invoking Theorem \ref{thm:nonvanishingFormsnu1} instead of Theorem \ref{thm:nonvanishingForms}.
\hfill $\Box$
\end{proof}

\begin{thm} \label{thm:multiplier2} 
Let $X$ be a $\Q$-factorial projective variety and let $L$ be a nef divisor on $X$ with $\nu(X,L)=1$. Assume that there exists a resolution $\pi\colon Y\to X$ and a singular metric $h$ on $\pi^*\OO_X(L)$ with semipositive curvature current such that the multiplier ideal sheaf $\mathcal I(h)$ is different from $\OO_Y$. Then there exists a $\Q$-divisor $D\geq0$ such that 
$$L\equiv D\quad\text{and}\quad \kappa(X,D)\geq0.$$
\end{thm}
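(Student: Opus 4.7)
The plan is to transport the problem from $X$ to the resolution $Y$, invoke Theorem \ref{thm:nu1a} there, and then push the conclusion back down to $X$. The key observation is that a nontrivial multiplier ideal $\mathcal I(h)\neq\OO_Y$ automatically produces a nonzero effective divisor sitting inside a pseudoeffective decomposition of $\pi^*L$, which is precisely the shape of the hypothesis of Theorem \ref{thm:nu1a}.

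Concretely, I would first fix $m>0$ such that $mL$ is Cartier, and interpret $h$ as an honest singular metric with semipositive curvature current on the line bundle $\pi^*\OO_X(mL)$ in the sense of \S\ref{subsec:metric}. Since $\mathcal I(h)\neq\OO_Y$, there exists a nonzero effective Cartier divisor $D_Y$ on $Y$ with $\mathcal I(h)\subseteq\OO_Y(-D_Y)$. Lemma \ref{lem:33} then yields that $m\pi^*L-D_Y$ is pseudoeffective. Equivalently, setting $D'_Y=\frac{1}{m}D_Y$ and $F'=\pi^*L-D'_Y$, we obtain a decomposition
$$ D'_Y + F' \sim_\Q \pi^*L $$
in which $D'_Y\geq 0$ is a nonzero $\Q$-divisor and $F'$ is pseudoeffective.

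Next, because $Y$ is smooth (hence $\Q$-factorial), $\pi^*L$ is nef, and $\nu(Y,\pi^*L)=\nu(X,L)=1$ by the discussion in \S\ref{subsec:numdim}, Theorem \ref{thm:nu1a} applies to the triple $(\pi^*L,D'_Y,F')$ on $Y$. This furnishes an effective $\Q$-divisor $E\geq 0$ on $Y$ with $\pi^*L\equiv E$ and $\kappa(Y,E)\geq\kappa(Y,D'_Y)\geq 0$. I would then set $D:=\pi_*E$, which is an effective $\Q$-divisor on $X$, $\Q$-Cartier by $\Q$-factoriality, and $\kappa(X,D)\geq 0$ is then automatic since $D$ is effective.

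The only delicate step is descending the numerical equivalence $\pi^*L\equiv E$ from $Y$ to $L\equiv D$ on $X$; this is where one must be a little careful and is the main obstacle. However, $\pi_*$ preserves numerical equivalence of $\Q$-Cartier divisors along the birational projective morphism $\pi$: for any curve $C'\subset X$ one picks a curve $C\subset Y$ with $\pi(C)=C'$ of some positive degree $d$, and the projection formula gives $\pi_*E\cdot C'=\frac{1}{d}E\cdot C=\frac{1}{d}\pi^*L\cdot C=L\cdot C'$. The chain $L=\pi_*\pi^*L\equiv\pi_*E=D$ then closes the argument. Beyond this bookkeeping I do not expect any unexpected difficulty, since the nontrivial geometric input is already packaged into Theorem \ref{thm:nu1a} and Lemma \ref{lem:33}.
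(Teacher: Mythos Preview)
There is a genuine gap in your second paragraph. From $\mathcal I(h)\neq\OO_Y$ you cannot in general conclude that $\mathcal I(h)\subseteq\OO_Y(-D_Y)$ for some \emph{nonzero effective divisor} $D_Y$: the cosupport of a multiplier ideal may well lie entirely in codimension $\geq 2$ on the smooth variety $Y$ (think of a metric whose singularities are concentrated at isolated points). In that situation no such $D_Y$ exists, Lemma~\ref{lem:33} is inapplicable as you have written it, and the input $D'_Y\neq 0$ required by Theorem~\ref{thm:nu1a} is simply not there.

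The paper repairs exactly this point by \emph{creating} a divisor. One picks a closed point $y$ in the zero scheme of $\mathcal I(h)$, blows up $\mu\colon\hat Y\to Y$ at $y$, and lets $E$ be the exceptional divisor. Then $\mu^{-1}\mathcal I_y\cdot\OO_{\hat Y}=\OO_{\hat Y}(-E)$, and by \cite[Proposition~14.3]{Dem01} the induced metric $\hat h$ on $(\pi\circ\mu)^*\OO_X(L)$ satisfies $\mathcal I(\hat h)\subseteq\mu^{-1}\mathcal I(h)\cdot\OO_{\hat Y}\subseteq\OO_{\hat Y}(-E)$. Now Lemma~\ref{lem:33} gives that $(\pi\circ\mu)^*L-E$ is pseudoeffective, and Theorem~\ref{thm:nu1a} is applied on $\hat Y$ with the nonzero effective divisor $E$. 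Apart from this extra blow-up, the rest of your outline (apply Theorem~\ref{thm:nu1a}, then push forward) matches the paper. A minor remark on your final paragraph: the identity $\pi_*E\cdot C'=\frac{1}{d}\,E\cdot C$ is not a projection formula and fails when $C$ meets the exceptional locus; to make the descent of numerical equivalence clean, either choose $C$ disjoint from $\mathrm{Exc}(\pi)$, or write $E=\pi^*(\pi_*E)+F$ with $F$ exceptional and use the Negativity Lemma to see $F=0$.
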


\begin{proof}
Let $V \subseteq Y$ be the subspace defined by $\mathcal I(h)$, and let $y$ be a closed point in $V$ with ideal sheaf $\mathcal I_y$ in $y$. Let $\mu\colon \widehat Y \to Y$ be the blow-up of $Y$ at $y$ and let $E = \pi^{-1}(y) $ be the exceptional divisor. Let $\widehat h$ be the induced metric on $(\pi\circ\mu)^*\OO_X(L)$. By \cite[Proposition 14.3]{Dem01}, we have
$$\mathcal I(\widehat h) \subseteq \mu^{-1}\mathcal I(h)\cdot\OO_{\widehat Y}\subseteq\mu^{-1}\mathcal I_y\cdot\OO_{\widehat Y}=\OO_{\widehat Y}(-E).$$ 
By Lemma \ref{lem:33}, the divisor $(\pi\circ\mu)^*L - E$ is pseudoeffective. Then by Lemma \ref{thm:nu1a} there exists a $\Q$-divisor $\widehat D\geq0$ on $\widehat Y$ such that $(\pi\circ\mu)^*L\equiv\widehat D$, and we set $D=(\pi\circ\mu)_*\widehat D$. 
\hfill $\Box$
\end{proof}

\begin{cor}\label{cor:multiplier}
Let $(X,\Delta)$ be a projective klt pair such that $K_X+\Delta$ is nef and $\nu(X,K_X+\Delta)=1$. Assume that there exist a resolution $\pi\colon Y\to X$, a positive integer $m$ such that $m(K_X+\Delta)$ is Cartier, and a singular metric $h$ on $\pi^*\OO_X\big(m(K_X+\Delta)\big)$ with semipositive curvature current, such that the multiplier ideal sheaf $\mathcal I(h)$ is different from $\OO_Y$. Then $\kappa(X,K_X+\Delta)\geq0$.
\end{cor}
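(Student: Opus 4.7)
The plan is to apply Theorem \ref{thm:multiplier2} directly to the divisor $L \deq m(K_X+\Delta)$ and then promote the resulting numerical equivalence to an actual lower bound on the Kodaira dimension of $K_X+\Delta$ via the Campana--Koziarz--P\u{a}un theorem \cite[Theorem 0.1]{CKP12}, exactly as was done in the proof of Corollary \ref{cor:nu1}.

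First I would verify that the hypotheses of Theorem \ref{thm:multiplier2} are satisfied by $L = m(K_X+\Delta)$. Since $K_X+\Delta$ is nef, so is $L$, and since the numerical dimension is invariant under multiplication by a positive integer, $\nu(X,L) = \nu(X,K_X+\Delta) = 1$. The divisor $L$ is Cartier by hypothesis, and we are given a singular hermitian metric $h$ on $\pi^*\OO_X(L)$ with semipositive curvature current and $\mathcal I(h) \neq \OO_Y$. Therefore Theorem \ref{thm:multiplier2} applies and produces a $\Q$-divisor $D \geq 0$ on $X$ such that
\[
m(K_X+\Delta) \equiv D \quad\text{and}\quad \kappa(X,D) \geq 0.
\]

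Dividing by $m$, we obtain $K_X+\Delta \equiv \frac{1}{m}D$, and $\frac{1}{m}D$ is an effective $\Q$-divisor. Since $(X,\Delta)$ is klt, \cite[Theorem 0.1]{CKP12} yields
\[
\kappa(X,K_X+\Delta) \geq \kappa\bigl(X,\tfrac{1}{m}D\bigr) = \kappa(X,D) \geq 0,
\]
which is the desired conclusion.

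There is no real obstacle here; the entire content sits in Theorem \ref{thm:multiplier2}, whose proof already combined the blow-up/multiplier-ideal argument with Lemma \ref{lem:33} and Theorem \ref{thm:nu1a}. The only minor point to double-check is that numerical dimension is preserved under scaling by $m$, which is immediate from the characterisation $\nu(X,L) = \sup\{k : L^k \not\equiv 0\}$ for nef divisors.
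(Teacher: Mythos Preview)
Your proof is correct and follows essentially the same route as the paper: apply Theorem \ref{thm:multiplier2} to $L=m(K_X+\Delta)$ and then invoke \cite[Theorem 0.1]{CKP12}. If anything, you are slightly more careful than the paper in tracking the factor of $m$ when passing from $L\equiv D$ to $K_X+\Delta\equiv \tfrac{1}{m}D$.
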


\begin{proof} 
By passing a to $\Q$-factorialisation, we may assume that $X$ is $\Q$-factorial. By Theorem \ref{thm:multiplier2} applied to $L=m(K_X+\Delta)$, there exists an effective $\Q$-divisor $D$ on $X$ such that $K_X+\Delta\equiv D$ and $\kappa(X,D)\geq0$. By \cite[Theorem 0.1]{CKP12} we have $\kappa(X,K_X+\Delta)\geq\kappa(X,D)$, and the result follows.
\hfill $\Box$
\end{proof}

The following is the main result of this section.

\begin{thm}\label{thm:nu1}
Let $X$ be a minimal  projective terminal variety such that $\nu(X,K_X)=1$. If $\chi(X,\OO_X)\neq0$, then $\kappa(X,K_X)\geq0$.
\end{thm}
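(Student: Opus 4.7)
The plan is to argue by contradiction, following the pattern of Corollaries \ref{cor:nv} and \ref{cor:semipositive}, but now bypassing any hypothesis on the existence of a metric with algebraic singularities (or hermitian semipositivity) on $K_X$. Assume $\kappa(X,K_X)=-\infty$. Since $X$ is minimal, $K_X$ is nef and in particular pseudoeffective. Fix a resolution $\pi\colon Y\to X$ and a positive integer $\ell$ such that $\ell K_X$ and $\ell K_Y$ are Cartier. Then, for every positive integer $m$, the line bundle $\pi^*\OO_X(m\ell K_X)$ is pseudoeffective, hence carries some singular hermitian metric $h_m$ with semipositive curvature current.

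The first ingredient is a dichotomy coming from Corollary \ref{cor:multiplier} (with $\Delta=0$): if $\mathcal I(h_m)\neq\OO_Y$ for some $m$, then $\kappa(X,K_X)\geq 0$, which contradicts our standing assumption. Therefore $\mathcal I(h_m)=\OO_Y$ for every positive integer $m$. This is the observation that replaces the a priori regularity hypothesis on the metric of $K_X$ which was used in Corollaries \ref{cor:nv} and \ref{cor:semipositive}.

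The second ingredient is Theorem \ref{thm:DPS}, applied to $\mathcal L=\pi^*\OO_X(m\ell K_X)$ and the metric $h_m$: since $\mathcal I(h_m)=\OO_Y$, for every $q\geq 0$ the natural morphism
\[
H^0\big(Y,\Omega^{n-q}_Y\otimes\pi^*\OO_X(m\ell K_X)\big)\longrightarrow H^q\big(Y,K_Y\otimes\pi^*\OO_X(m\ell K_X)\big)
\]
is surjective. Since $\Omega^{n-q}_Y$ is a direct summand of $(\Omega^1_Y)^{\otimes(n-q)}$, Theorem \ref{thm:nonvanishingFormsnu1} forces the source to vanish for all $m\neq 0$ sufficiently divisible. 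Hence $H^q\big(Y,K_Y+m\ell\pi^*K_X\big)=0$ for every $q\geq 0$, and Serre duality yields $H^p\big(Y,\pi^*\OO_X(-m\ell K_X)\big)=0$ for every $p$, so $\chi\big(Y,\pi^*\OO_X(-m\ell K_X)\big)=0$ for infinitely many $m$.

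Finally, $\chi\big(Y,\pi^*\OO_X(-m\ell K_X)\big)$ is a polynomial in $m$ of degree at most $n$, hence identically zero; specialising at $m=0$ gives $\chi(Y,\OO_Y)=0$. Since $X$ is terminal and thus has rational singularities, $\chi(X,\OO_X)=\chi(Y,\OO_Y)=0$, contradicting the hypothesis. The main obstacle is really packaged into the preceding results of the section: Corollary \ref{cor:multiplier} delivers exactly the dichotomy that converts ``some metric has nontrivial multiplier ideal'' into a source of sections, allowing us to assume the multiplier ideal is trivial and then run the hard-Lefschetz-plus-differential-forms-vanishing argument without assuming any regularity of the metric on $K_X$.
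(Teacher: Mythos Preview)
Your proof is correct and follows essentially the same approach as the paper's own proof. The paper also splits into the same dichotomy via Corollary~\ref{cor:multiplier}, and in the case of trivial multiplier ideals invokes Corollary~\ref{cor:nd1} (whose proof is exactly the hard Lefschetz plus Theorem~\ref{thm:nonvanishingFormsnu1} argument you spell out); the only cosmetic difference is that the paper fixes one metric $h$ and uses its tensor powers $h^{\otimes\ell}$ rather than choosing a separate metric $h_m$ for each $m$, but this changes nothing in the logic.
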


\begin{proof}
If there exist a resolution $\pi\colon Y\to X$, a positive integer $m$ such that $mK_X$ is Cartier, and a singular metric $h$ on $\pi^*\OO_X(mK_X)$ with semipositive curvature current, such that the multiplier ideal sheaf $\mathcal I(h)$ is different from $\OO_Y$, then the result follows from Corollary \ref{cor:multiplier}. 

Otherwise, pick a resolution $\pi\colon Y\to X$, a positive integer $m$ such that $mK_X$ is Cartier, and a singular metric $h$ on $\pi^*\OO_X(mK_X)$ with semipositive curvature current. Then the result follows from Corollary \ref{cor:nd1}; note that since $\mathcal I(h^{\otimes\ell})=\OO_Y$ for all $\ell$, here the hypothesis in Corollary  \ref{cor:nd1} that $h$ has algebraic singularities is not necessary.
\hfill $\Box$
\end{proof}

\begin{rem}\label{remark:dim3}{\rm
Theorem \ref{thm:nu1} gives a new proof of the hardest part of nonvanishing for minimal terminal threefolds. Indeed, if $X$ is a minimal terminal threefold, we only need to check the cases $\nu(X,K_X)\in\{1,2\}$. If the irregularity $q(X)$ is positive, then the nonvanishing follows from known cases of Iitaka's conjecture $C_{n,m}$ applied to the Albanese map, see for instance \cite[pp.\ 73-74]{MP97}. The case $\nu(X,K_X)=2$ is a relatively quick application of Miyaoka's inequality for Chern classes and the Kawamata-Viehweg vanishing, see \cite[pp.\ 83-84]{MP97}. The remaining case $\nu(X,K_X)=1$ is the most difficult. Since we may assume that $q(X) = 0,$ we are reduced to the case that $\chi(X,\OO_X)>0$. Then the nonvanishing follows from Theorem \ref{thm:nu1}.}
\end{rem}

\begin{thm} \label{thm:nonvanishingFormsnu2}
Let $X$ be a minimal  projective terminal pair of dimension $n$ such that $\nu(X,K_X)=1$. Let $\pi\colon Y\to X$ be a resolution of $X$, fix a tensor representation $\mathcal E$ of $\Omega_Y^1$, and let $q$ be the rank of $\mathcal E$. Suppose that 
$$ h^0\big(Y,\mathcal E\otimes \OO_Y(m_0\pi^*K_X)\big) \geq  q+1$$
for some $m_0$ such that $m_0K_X$ is Cartier. Then $K_X$ is semiample.
\end{thm}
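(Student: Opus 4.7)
The plan is to mirror the proof of Theorem \ref{thm:abundanceForms}, replacing the final appeal to Theorem \ref{thm:MMPtwist2} by an appeal to Corollary \ref{cor:nu1}; since $\Delta=0$ here, the log-smooth reduction from the earlier proof can be skipped. I would argue by contradiction, assuming that $K_X$ is not semiample yet $h^0\big(Y,\mathcal E\otimes\OO_Y(m_0\pi^*K_X)\big)\geq q+1$ for some $m_0$ such that $m_0 K_X$ is Cartier. Let $\mathcal F\subseteq\mathcal E\otimes\OO_Y(m_0\pi^*K_X)$ be the subsheaf generated by global sections, of rank $r\leq q$. Since $\mathcal F$ is globally generated with $h^0(\mathcal F)\geq r+1$, Proposition \ref{pro:wedge} gives $h^0(Y,\det\mathcal F)\geq 2$. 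Saturating $\det\mathcal F$ inside $\bigwedge^r\mathcal E\otimes\OO_Y(rm_0\pi^*K_X)$ produces a Cartier divisor $N$ with $h^0(Y,N)\geq 2$ and an injection $\OO_Y(-F_Y)\hookrightarrow\bigwedge^r\mathcal E$ with torsion-free cokernel, where $-F_Y=N-rm_0\pi^*K_X$.

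Next I would realise $\bigwedge^r\mathcal E$ as a subsheaf of a tensor power $(\Omega_Y^1)^{\otimes s}$, saturate $\OO_Y(-F_Y)$ inside this tensor power, and apply Theorem \ref{thm:CP11} to the resulting torsion-free quotient. This is legitimate because $K_Y$ is pseudoeffective: $X$ is terminal and $K_X$ is nef, so $K_Y=\pi^*K_X+E$ with $E\geq0$ effective and exceptional. The conclusion is that $aK_Y+F_Y$ is pseudoeffective for some positive integer $a$ depending on $r$, $s$, $n$, and pushing this down to $X$ yields that $aK_X+\pi_*F_Y$ is pseudoeffective. Combining with $\pi_*N\sim rm_0K_X-\pi_*F_Y$ I obtain
\[
(rm_0+a)K_X\sim_\Q \pi_*N+(aK_X+\pi_*F_Y).
\]
The divisor $\pi_*N$ is non-zero and effective, since the identity $h^0\big(X,\OO_X(\pi_*N)\big)=h^0\big(Y,\OO_Y(N)\big)\geq 2$ rules out $\pi_*N=0$.

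Two linearly independent sections of $\OO_X(\pi_*N)$ have linearly independent symmetric $k$-th powers, yielding $h^0\big(X,\OO_X(k\pi_*N)\big)\geq k+1$ and hence $\kappa(X,\pi_*N)\geq 1$. Scaling the displayed relation by $1/(rm_0+a)$ puts it in the form required by Corollary \ref{cor:nu1}, which then gives $\kappa(X,K_X)\geq\kappa(X,\pi_*N)\geq 1$. Since $\nu(X,K_X)=1$, this forces $\kappa(X,K_X)=\nu(X,K_X)$, so Lemma \ref{lem:Kappa=KappaSigma} guarantees that $(X,0)$ admits a good model, and nefness of $K_X$ upgrades this to semiampleness of $K_X$ — contradicting the hypothesis.

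I expect the main obstacle to be the step that extracts pseudoeffectivity of $aK_Y+F_Y$ from Theorem \ref{thm:CP11}: because $\mathcal E$ is only given as an abstract tensor representation, one must carefully realise $\bigwedge^r\mathcal E$ inside some $(\Omega_Y^1)^{\otimes s}$, saturate $\OO_Y(-F_Y)$ there, and verify that the first Chern class of the resulting torsion-free quotient has the expected form, so that Theorem \ref{thm:CP11} produces a pseudoeffective class of the right shape. The remaining steps are close analogues of arguments already appearing in the proofs of Theorems \ref{thm:abundanceForms} and \ref{thm:nonvanishingFormsnu1}.
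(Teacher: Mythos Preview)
Your approach is essentially the paper's: both saturate $\det\mathcal F$ in $\bigwedge^r\mathcal E\otimes\OO_Y(rm_0\pi^*K_X)$, extract a pseudoeffective class from the torsion-free quotient via Theorem~\ref{thm:CP11}, push the resulting relation down to $X$, and finish with Corollary~\ref{cor:nu1} and Lemma~\ref{lem:Kappa=KappaSigma}. Two small remarks. First, the equality $h^0\big(X,\OO_X(\pi_*N)\big)=h^0\big(Y,\OO_Y(N)\big)$ that you invoke need not hold for an arbitrary Cartier divisor $N$ on $Y$; the paper bypasses this by citing Lemma~\ref{lem:pushforward} to obtain $\kappa(X,\pi_*N)\geq\kappa(Y,N)\geq1$ directly, which is all that Corollary~\ref{cor:nu1} requires. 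Second, your anticipated ``main obstacle'' dissolves: in characteristic zero a tensor representation is a direct summand of a tensor power, so $\bigwedge^r\mathcal E$ is too, and hence the torsion-free quotient $\mathcal Q=\bigwedge^r\mathcal E/\OO_Y(-F_Y)$ is already a torsion-free quotient of some $(\Omega_Y^1)^{\otimes s}$; the paper therefore applies Theorem~\ref{thm:CP11} without any re-saturation, setting $F=c_1(\mathcal Q)$ and noting $\ell K_Y\sim F-F_Y$, which is exactly your $aK_Y+F_Y$ being pseudoeffective.
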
 

\begin{proof}
We follow the proof of Theorem \ref{thm:abundanceForms} closely. Let $\mathcal F$ be the subsheaf of $\mathcal E \otimes \OO_Y(m_0\pi^*K_X)$ generated by its global sections, and let $r$ be the rank of $\mathcal F$. Then
$$\det\mathcal F\subseteq\bigwedge^r\mathcal E \otimes \OO_Y(rm_0\pi^*K_X),$$
and there exists a Cartier divisor $N$ such that  $\OO_Y(N)$ is the saturation of $\det\mathcal F$ in $\bigwedge^r\mathcal E \otimes \OO_Y(rm_0\pi^*K_X)$. By Proposition \ref{pro:wedge} we have
\begin{equation}\label{eq:infmany2a}
h^0(Y,N)\geq2,
\end{equation}
and denote 
\begin{equation}\label{eq:18a}
{-}F_Y=N-rm_0\pi^*K_X.
\end{equation}
Then we have the exact sequence
$$ 0 \to \OO_Y(-F_Y) \to \bigwedge^r\mathcal E \to \mathcal Q \to 0.$$
Since $\OO_Y(-F_Y)$ is saturated in $\bigwedge^r\mathcal E$, the sheaf $\mathcal Q$ is torsion-free, and hence $F=c_1(\mathcal Q)$ is pseudoeffective by Theorem \ref{thm:CP11}. From the above exact sequence, there exists a positive integer $\ell$ such that $\ell K_Y\sim F-F_Y$, and \eqref{eq:18a} gives
$$N+F\sim \ell K_Y+rm_0\pi^*K_X.$$
Pushing forward this relation by $\pi$, we obtain
$$\pi_*N+\pi_*F\sim_\Q (rm_0+\ell)K_X.$$
Since $\pi_*F$ is pseudoeffective and $\kappa(X,\pi_*N)\geq1$ by Lemma \ref{lem:pushforward}, Corollary \ref{cor:nu1} implies that $\kappa(X,K_X)\geq1$, and hence $\kappa(X,K_X)=\nu(X,K_X)=1$. Now  conclude by Lemma \ref{lem:Kappa=KappaSigma}.
\hfill $\Box$
\end{proof}

\begin{cor} \label{cor:xx}
Let $X$ be a minimal  projective terminal variety of dimension $n$ such that $\nu(X,K_X)=1$. 
\begin{enumerate}
\item[\rm (a)] If $\vert\chi(X,\OO_X)\vert >  2^{n-1}$, then $K_X$ is semiample. 
\item[\rm (b)] If $\kappa(X,K_X)\geq0$, $\pi_1(X)$ is infinite and $\chi(X,\OO_X)\neq 0$, then $K_X$ is semiample. 
\end{enumerate}
\end{cor}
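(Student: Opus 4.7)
For part (a), my plan is to argue by contradiction using Theorem \ref{thm:nonvanishingFormsnu2}. Suppose $K_X$ is not semiample, and fix a sufficiently high resolution $\pi\colon Y\to X$. Applying Theorem \ref{thm:nonvanishingFormsnu2} with the tensor representation $\mathcal E=\Omega_Y^q$, which has rank $\binom{n}{q}$, and with $m=0$ yields
$$h^0(Y,\Omega_Y^q)\leq\binom{n}{q}\quad\text{for every }q\geq0.$$
Hodge symmetry on the smooth projective variety $Y$ then gives $h^q(Y,\OO_Y)=h^0(Y,\Omega_Y^q)$, and rationality of the singularities of $X$ (which follows from terminality) yields $h^q(X,\OO_X)=h^q(Y,\OO_Y)$ via the Leray spectral sequence. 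Hence $h^q(X,\OO_X)\leq\binom{n}{q}$ for all $q$, and splitting the alternating sum defining $\chi$ by parity and using $\sum_{q\text{ even}}\binom{n}{q}=\sum_{q\text{ odd}}\binom{n}{q}=2^{n-1}$ gives
$$|\chi(X,\OO_X)|\leq\max\Big(\sum_{q\text{ even}}\binom{n}{q},\sum_{q\text{ odd}}\binom{n}{q}\Big)=2^{n-1},$$
contradicting the hypothesis of (a).

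For part (b), the plan is to reduce to (a) by passing to an étale cover of sufficiently large degree. Since $\pi_1(X)$ is infinite (interpreted as giving unbounded finite étale covers, i.e.\ infiniteness of the algebraic fundamental group), pick a finite étale cover $f\colon\tilde X\to X$ of degree $d$ with $d\,|\chi(X,\OO_X)|>2^{n-1}$. Étale pullback preserves $\Q$-factoriality, terminal singularities, nefness, and numerical dimension, so $\tilde X$ is again a minimal $\Q$-factorial projective terminal variety of dimension $n$ with $K_{\tilde X}=f^*K_X$ and $\nu(\tilde X,K_{\tilde X})=1$. By multiplicativity of the Euler characteristic along étale covers, $\chi(\tilde X,\OO_{\tilde X})=d\cdot\chi(X,\OO_X)$, so $|\chi(\tilde X,\OO_{\tilde X})|>2^{n-1}$. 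Part (a) then yields that $K_{\tilde X}$ is semiample, and by the standard descent of semiampleness along finite Galois étale covers (replacing $\tilde X$ by its Galois closure over $X$ if necessary, and taking the quotient of the Iitaka morphism by the Galois group), one concludes that $K_X$ is semiample.

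The main obstacle I foresee is in part (b): interpreting "$\pi_1(X)$ infinite" so as to produce étale covers of unbounded degree. In the algebro-geometric setting of the paper the natural reading is that the algebraic (profinite) fundamental group of $X$ is infinite, which is precisely equivalent to $X$ admitting étale covers of arbitrarily large finite degree. Modulo this point, the rest of the argument is routine bookkeeping: invariance of $\kappa$, $\nu$ and $\chi$ under étale pullback is standard, as is the descent of semiampleness along finite Galois covers.
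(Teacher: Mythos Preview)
Your proof of (a) is essentially identical to the paper's.

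For (b), your approach is genuinely different from the paper's, and the obstacle you flag is a real gap, not a matter of interpretation. In this paper $\pi_1(X)$ means the topological fundamental group of the complex variety (the proof cites \cite{Tak03} and \cite{Cam95}, both of which concern the topological $\pi_1$). It is not known in general that an infinite topological fundamental group of a projective variety forces finite \'etale covers of unbounded degree; this would amount to knowing that the profinite completion is infinite, and Toledo-type constructions show that projective fundamental groups can be very far from residually finite. So your reduction to (a) does not go through as stated. A secondary point: your argument never uses the hypothesis $\kappa(X,K_X)\geq0$, which should already make you suspicious that something has been lost.

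The paper's argument for (b) is rather different and explains why the extra hypothesis is there. One assumes for contradiction that $\kappa(X,K_X)=0$ and passes to a resolution $\pi\colon Y\to X$ (so $\pi_1(Y)$ is still infinite by \cite{Tak03}). Then one invokes Campana's criterion \cite[Corollary~5.3]{Cam95}: on a smooth projective variety with infinite $\pi_1$ and $\chi(\OO)\neq0$, some coherent subsheaf $\mathcal F\subseteq\Omega_Y^q$ has $\kappa(Y,\det\mathcal F)\geq1$. Saturating $\det\mathcal F$ inside $\bigwedge^r\Omega_Y^q$ and using Theorem~\ref{thm:CP11} produces a divisor $N$ with $\kappa(Y,N)\geq1$ and a pseudoeffective $F$ with $N+F\sim\ell K_Y$; pushing to $X$ and applying Corollary~\ref{cor:nu1} gives $\kappa(X,K_X)\geq1$, contradicting $\kappa=0$. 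Thus $\kappa(X,K_X)=\nu(X,K_X)=1$, and Lemma~\ref{lem:Kappa=KappaSigma} finishes.
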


\begin{proof} 
For (a), let $\pi\colon Y\to X$ be a resolution. If $\vert\chi(X,\OO_X)\vert =\vert\chi(Y,\OO_Y)\vert>  2^{n-1}$, then there exists $q$ such that 
$$ h^0\big(Y,\Omega^q_Y) =h^q(Y,\OO_Y) > \binom{n}{q}.$$ 
The result follows by Theorem \ref{thm:nonvanishingFormsnu2}.

For (b), by Lemma \ref{lem:Kappa=KappaSigma} it suffices to show that $\kappa(X,K_X)=\nu(X,K_X)$. Arguing by contradiction, assume that $\kappa(X,K_X)=0$. Let $\pi\colon Y\to X$ be a resolution of $X$. Note that $\vert\chi(X,\OO_X)\vert=\vert\chi(X,\OO_X)\vert\neq 0$ since $X$ has rational singularities, and that $\pi_1(Y)$ is infinite by \cite[Theorem 1.1]{Tak03}. In order to derive contradiction, by \cite[Corollary 5.3]{Cam95} it suffices to show that for any $q$ and for any coherent subsheaf $\mathcal F\subseteq\Omega_Y^q$ we have $\kappa(Y,\det\mathcal F)\leq0$.

To this end, we follow the proof of Theorem \ref{thm:nonvanishingFormsnu2}. Fix $q$, and assume that there exists a coherent subsheaf $\mathcal F\subseteq\Omega_Y^q$ of rank $r$ such that $\kappa(Y,\det\mathcal F)>0$. Then
$$\det\mathcal F\subseteq\bigwedge^r\Omega_Y^q,$$
and there exists a Cartier divisor $N\geq0$ such that  $\OO_Y(N)$ is the saturation of $\det\mathcal F$ in $\bigwedge^r\Omega_Y^q$. Then we have the exact sequence
$$ 0 \to \OO_Y(N) \to \bigwedge^r\Omega_Y^q \to \mathcal Q \to 0,$$
where the sheaf $\mathcal Q$ is torsion-free, and hence $F=c_1(\mathcal Q)$ is pseudoeffective by Theorem \ref{thm:CP11}. From the above exact sequence, there exists a positive integer $\ell$ such that $N+F\sim \ell K_Y$. Pushing forward this relation by $\pi$, we obtain
$$\pi_*N+\pi_*F\sim_\Q \ell K_X,$$
hence Corollary \ref{cor:nu1} and Lemma \ref{lem:pushforward} imply 
$$0<\kappa(Y,\det\mathcal F)\leq\kappa(Y,N)\leq\kappa(X,\pi_*N)\leq\kappa(X,K_X)=0,$$
a contradiction which finishes the proof.
\hfill $\Box$
\end{proof}

We also notice:

\begin{thm} 
Let $X$ be a minimal  projective klt variety of dimension $n$ such that $\nu(X,K_X) = 1$. If for some $q>0$ there exists 
$$ s \in H^0\Big(X, \Omega^{[q]}_X\Big)$$
which vanishes along some divisor, then $\kappa (X,K_X) \geq 0.$ 
\end{thm}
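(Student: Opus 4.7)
The plan is to exhibit a positive multiple of $K_X$ as a sum of a non-zero effective divisor and a pseudoeffective one, and then invoke Corollary~\ref{cor:nu1}. First I would fix a log resolution $\pi\colon Y\to X$ with reduced simple normal crossing exceptional divisor $\Delta_Y=\sum E_i$. By \cite{GKKP11} the reflexive form $s$ extends to $\tilde s\in H^0(Y,\Omega^q_Y)\subseteq H^0(Y,\Omega^q_Y(\log\Delta_Y))$, and the hypothesis that $s$ vanishes along a divisor $D>0$ on $X$ translates (via the fact that $\pi$ is an isomorphism over the smooth locus of $X$) to $\tilde s$ vanishing along a divisor $\tilde D>0$ on $Y$ whose pushforward satisfies $\pi_*\tilde D\geq D>0$.

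Dividing $\tilde s$ by the tautological section of $\OO_Y(\tilde D)$ yields an injection $\OO_Y(\tilde D)\hookrightarrow\Omega^q_Y(\log\Delta_Y)$; saturating inside $\Omega^q_Y(\log\Delta_Y)$ produces $\OO_Y(\tilde D')\hookrightarrow\Omega^q_Y(\log\Delta_Y)$ with $\tilde D'\geq\tilde D$ and with torsion free quotient $\QQ$. Since $X$ is klt with $K_X$ nef, the klt discrepancies $a_i>-1$ give $K_Y+\Delta_Y=\pi^*K_X+\sum(a_i+1)E_i$, which is pseudoeffective. As $\QQ$ is also a torsion free quotient of $\Omega^1_Y(\log\Delta_Y)^{\otimes q}$ via antisymmetrization, Theorem~\ref{thm:CP11} guarantees that $c_1(\QQ)$ is pseudoeffective. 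Using the identity $c_1(\Omega^q_Y(\log\Delta_Y))=\binom{n-1}{q-1}(K_Y+\Delta_Y)$, the short exact sequence yields
$$\textstyle\binom{n-1}{q-1}(K_Y+\Delta_Y)\sim\tilde D'+c_1(\QQ).$$

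Pushing forward by $\pi$ annihilates $\Delta_Y$ and the $\pi$-exceptional contributions in $K_Y+\Delta_Y-\pi^*K_X$, and delivers
$$\textstyle\binom{n-1}{q-1}K_X\sim_\Q\pi_*\tilde D'+\pi_*c_1(\QQ),$$
where $\pi_*\tilde D'\geq D>0$ is non-zero effective and $\pi_*c_1(\QQ)$ remains pseudoeffective. Corollary~\ref{cor:nu1}, applied with $\Delta=0$, now yields $\kappa(X,K_X)\geq\kappa(X,\pi_*\tilde D')\geq0$. The boundary case $q=n$ is trivial, since then $s$ is itself a non-zero section of $\OO_X(K_X)$ with a zero divisor. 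The only minor subtlety is the need to work with $\Omega^q_Y(\log\Delta_Y)$ and $K_Y+\Delta_Y$ rather than $\Omega^q_Y$ and $K_Y$, which is forced by $X$ being merely klt, not terminal; but the logarithmic contributions disappear on pushforward, so no essential difficulty arises.
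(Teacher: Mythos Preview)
Your proof is correct and follows essentially the same route as the paper's: extend the reflexive form to a resolution via \cite{GKKP11}, saturate the resulting rank-one subsheaf inside a sheaf of differentials, use Theorem~\ref{thm:CP11} to get pseudoeffectivity of the quotient's first Chern class, push forward to $X$, and conclude with Corollary~\ref{cor:nu1}. The one difference is that you work inside $\Omega^q_Y(\log\Delta_Y)$ and use the pseudoeffectivity of $K_Y+\Delta_Y=\pi^*K_X+\sum(a_i+1)E_i$, whereas the paper saturates inside $\Omega^q_Y$ and applies Theorem~\ref{thm:CP11} with $\Delta=0$, tacitly using that $K_Y$ itself is pseudoeffective (which holds since $K_X$ nef forces $X$, hence $Y$, to be non-uniruled). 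Your logarithmic detour makes this step self-contained, at no real cost; both arguments are equivalent in substance.
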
 

\begin{proof}  
Suppose that $s$ vanishes along a divisor $D\geq0$, and let $\pi\colon Y\to X$ be a resolution. Then by \cite[Theorem 4.3]{GKKP11}, the pullback $s_Y\in H^0(Y,\Omega_Y^q)$ of $s$ exists, and vanishes along the divisor $D_Y=\pi^{-1}_*D$. There is a Cartier divisor $D'\geq D_Y$ such that $\OO_Y(D')$ is the saturation of $\OO_Y(D_Y)$ in $\Omega^q_Y$, hence we have an exact sequence 
$$ 0 \to \OO_Y(D') \to  \Omega^q_Y  \to \mathcal Q \to  0,$$
where $\mathcal Q$ is torsion-free. The divisor $c_1(\mathcal Q)$ is pseudoeffective by Theorem \ref{thm:CP11}, and there exists a positive integer $\ell$ such that 
$$\ell K_Y \sim_\Q  D' + c_1(\mathcal Q).$$
By Theorem \ref{thm:nu1a}, this implies $\kappa(X,K_X)\geq\kappa(Y,K_Y)\geq\kappa(Y,D')\geq0$, which finishes the proof. 
\hfill $\Box$
\end{proof} 

\section{Uniruled varieties}\label{sec:uniruled}

When the underlying variety of a klt pair $(X,\Delta)$ is uniruled, which -- as explained in the introduction -- is equivalent to the canonical class not being pseudoeffective on a terminalisation of $X$, we can show that the pair in many circumstances has a good model. The following is Theorem \ref{thm:E}.

\begin{thm}\label{thm:uniruled}
Assume the existence of good models for klt pairs in dimensions at most $n-1$. Let $(X,\Delta)$ be a klt pair of dimension $n$ such that $X$ is uniruled and $K_X+\Delta$ is pseudoeffective. 
If  $\vert\chi(X,\OO_X)\vert >  2^{n-1}$,  then 
$(X,\Delta)$ has a good model.
\end{thm}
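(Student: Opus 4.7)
The plan is to reduce Theorem \ref{thm:uniruled} to Corollary \ref{cor:chi} by means of two successive modifications of the pair, followed by a direct application of the results of Section \ref{sec:thmC}.

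First, I would pass to a $\Q$-factorial terminalisation $\sigma\colon (X_1,\Delta_1)\to(X,\Delta)$, so that $(X_1,\Delta_1)$ is $\Q$-factorial terminal with $K_{X_1}+\Delta_1 = \sigma^*(K_X+\Delta)$. Since klt singularities are rational, $\chi(X_1,\OO_{X_1})=\chi(X,\OO_X)$, and the "good model" property is preserved, for example by \cite[Lemma 2.10]{HX13}. The variety $X_1$ remains uniruled, so by \cite{BDPP} the divisor $K_{X_1}$ is not pseudoeffective (although $K_{X_1}+\Delta_1$ still is).

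Second, I would invoke the reduction from \cite{DL15} that was already alluded to in the paper's introduction: since $X_1$ is uniruled and $K_{X_1}+\Delta_1$ is pseudoeffective, there is a further $\Q$-factorial terminal modification $(X_2,\Delta_2)$ of $(X_1,\Delta_1)$, birational in a suitable sense, such that $\kappa(X_2,K_{X_2})\geq 0$ and $(X_2,\Delta_2)$ has a good model if and only if $(X_1,\Delta_1)$ does. Provided this modification does not alter the Euler characteristic of the structure sheaf, we have $\chi(X_2,\OO_{X_2})=\chi(X,\OO_X)$. At this point all hypotheses of Corollary \ref{cor:chi} are in force for $(X_2,\Delta_2)$: it is $\Q$-factorial terminal of dimension $n$, the existence of good models is assumed in dimensions $\leq n-1$, and by hypothesis $(X,\Delta)$ (hence $(X_2,\Delta_2)$) has no good model. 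Corollary \ref{cor:chi} then yields
\[
|\chi(X,\OO_X)| \;=\; |\chi(X_2,\OO_{X_2})| \;\leq\; 2^{n-1},
\]
which is exactly the conclusion of the theorem.

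The main obstacle is the second step: one must verify that the DL15 reduction from the uniruled setting to the effective canonical class setting preserves the Euler characteristic of the structure sheaf. If the modification is a birational contraction between $\Q$-factorial klt pairs, this is automatic from the rationality of klt singularities. If the construction in \cite{DL15} genuinely involves a non-birational (generically finite) step, one has to argue carefully that the specific cover used is either étale in codimension one or otherwise leaves $\chi(\OO)$ unchanged; alternatively, one can try to replace the reduction by a purely birational one via an MRC-type fibration and Proposition \ref{pro:contraction}, which already handles pairs admitting a non-trivial fibration where $K_X+\Delta$ is not big over the base.
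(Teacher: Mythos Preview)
The obstacle you flag in your last paragraph is genuine and fatal for the approach as stated. The reduction in \cite{DL15} is \emph{not} birational: it passes through cyclic covers (the maps $\pi\colon X'\to X$ and $\mu\colon T\to W$ in the paper's proof are finite, typically ramified), and such covers do not preserve $\chi(\OO)$ in general. So the equality $\chi(X_2,\OO_{X_2})=\chi(X,\OO_X)$ simply fails, and your alternative via an MRC fibration and Proposition~\ref{pro:contraction} does not obviously salvage it either, since there is no reason for $K_X+\Delta$ to fail to be big over the MRC base.

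The paper's fix is to abandon the attempt to control $\chi$ directly and instead exploit the \emph{stronger} output of Corollary~\ref{cor:chi}, namely the bound $h^q(T,\OO_T)\leq\binom{n}{q}$ on each individual Hodge number. For a finite surjective morphism $f\colon V\to U$ between normal varieties, $\OO_U$ is a direct summand of $f_*\OO_V$ (trace splitting, \cite[Proposition 5.7]{KM98}), hence $h^q(U,\OO_U)\leq h^q(V,\OO_V)$ for every $q$. Applying this to both finite maps in the DL15 tower, and using rationality of singularities for the birational step (after checking that the intermediate variety $X'$ is klt), one obtains $h^q(X,\OO_X)\leq h^q(T,\OO_T)\leq\binom{n}{q}$ for all $q$, whence $|\chi(X,\OO_X)|\leq 2^{n-1}$. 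The point is that inequalities on Hodge numbers survive finite covers even though equalities on $\chi$ do not.
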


\begin{proof}
We follow closely the proof of \cite[Theorem 1.3]{DL15}. First of all, by passing to a resolution and by \cite[Lemma 2.9]{DL15}, we may assume that the pair $(X,\Delta)$ is log smooth, the divisor $\Delta$ is reduced, and there exists a $\Q$-divisor $D\geq0$ such that $K_X+\Delta\sim_\Q D$ and $\Supp\Delta=\Supp D$. Then the proof of \cite[Theorem 3.5]{DL15} shows that there are proper maps
$$T\stackrel{\mu}{\lto} W\stackrel{g}{\lto} X'\stackrel{\pi}{\lto} X,$$
where $\pi$ and $\mu$ are finite and $g$ is birational, and a $\Q$-divisor $\Delta_T$ on $T$ such that $(T,\Delta_T)$ is a log smooth klt pair with $|K_T|\neq\emptyset$ and
$$\kappa(T,K_T+\Delta_T)=\kappa(X,K_X+\Delta)\quad\text{and}\quad\nu(T,K_T+\Delta_T)=\nu(X,K_X+\Delta).$$ 
We argue by contradiction and assume that $(X,\Delta)$ does not have a good model. Then the pair $(T,\Delta_T)$ does not have a good model by Lemma \ref{lem:Kappa=KappaSigma}, hence
\begin{equation}\label{eq:uniruled}
h^q(T,\OO_T)\leq\binom{n}{q}\quad\text{for all }q
\end{equation}
by Corollary \ref{cor:chi}. Since $\OO_W$ is a direct summand of $\mu_*\OO_T$ and since $\OO_X$ is a direct summand of $\pi_*\OO_{X'}$ by \cite[Proposition 5.7]{KM98}, by the Leray spectral sequence
we have for each $q$ 
\begin{equation}\label{eq:uniruled1}
h^q(T,\OO_T)\geq h^q(W,\OO_W)\quad\text{and}\quad h^q(X',\OO_{X'})\geq h^q(X,\OO_X).
\end{equation}
We claim that $X'$ is a klt variety. The claim immediately implies the theorem: indeed, since then $X'$ has rational singularities, we have $h^q(W,\OO_W)=h^q(X',\OO_{X'})$, which together with \eqref{eq:uniruled} and \eqref{eq:uniruled1} gives 
$$h^q(X,\OO_X)\leq\binom{n}{q}\quad\text{for all }q,$$
and hence $\chi(X,\OO_X)\leq2^{n-1}$. 

To show the claim, by the proof of \cite[Theorem 3.5]{DL15} there exists a $\Q$-divisor $\Delta'$ on $X'$ such that $K_{X'}+\Delta'=\pi^*(K_X+\Delta)$ and $\Supp\Delta'=\Supp\pi^*\Delta$. Then for a rational number $0<\varepsilon\ll1$, denoting $\Delta''=\Delta'-\varepsilon\pi^*\Delta$, we have $\Delta''\geq0$ and
$$K_{X'}+\Delta''=\pi^*(K_X+(1-\varepsilon)\Delta).$$
Since the pair $(X,(1-\varepsilon)\Delta)$ is klt, so is the pair $(X',\Delta'')$ by \cite[Proposition 5.20]{KM98}, hence so is $X'$, which finishes the proof.
\hfill $\Box$
\end{proof}

\section{Nonvanishing on Calabi-Yau varieties}\label{sec:CY}

As mentioned in the introduction, similar techniques as those used above can be applied in the context of nef line bundles on varieties of Calabi-Yau type. In particular, Theorem \ref{thm:FormsCY0} generalises \cite[Proposition 3.4]{LOP16} and \cite[3.1]{Wi94} to any dimension. An immediate corollary is Theorem \ref{thm:G}.

\begin{thm} \label{thm:FormsCY0}
Assume the existence of good models for klt pairs in dimensions at most $n-1$. Let $X$ be a projective klt variety of dimension $n$ such that $K_X\sim_\Q0$, and let $L$ be a nef Cartier divisor on $X$.   Let $\pi\colon Y\to X$ be a resolution of $X$. Assume that for some positive integer $p$ we have
\begin{equation}\label{eq:eqeq}
H^0\big(Y,(\Omega^1_Y)^{\otimes p} \otimes \OO_Y(m\pi^*L)\big) \neq 0
\end{equation}
for infinitely many $m$. Then $\kappa(X,L) \geq 0$. 
\end{thm} 

\begin{proof} 
First assume that $L \equiv 0 $. If $ H^1(X,\OO_X) = 0$, then $L$ is torsion, i.e.\ there is a positive integer $m$ such that $mL \sim 0$. Hence, we may assume that $H^1(X,\OO_X) \ne 0$. 
Then $X$ has a non-trivial Albanese variety $A$ and there exist a finite quasi-\'etale cover $\eta\colon \widetilde X \to X$ and a variety $Z$ with canonical singularities such that $H^1(Z,\OO_Z)=0$ and 
$$ \widetilde X \simeq Z \times A,$$
see \cite[Corollary 3.6]{GKP16}, \cite[Proposition 9.1]{Dru17}. By further replacing $\widetilde X$ by a suitable finite \'etale cover, we may assume that $\OO_Z(\eta^*L) \simeq \OO_Z$. Now, consider a desingularisation $\widetilde Z \to Z$ and set $\widetilde Y = \widetilde Z \times A$, so that we obtain a desingularisation $\widetilde \pi\colon \widetilde Y \to \widetilde X$. By possibly blowing up further, we may assume that $\widetilde\pi$ factors through the natural projection $\widetilde X \times_X Y\to\widetilde X$.

Then by \cite[Exercise III.12.6]{Har77} there exists a line bundle $\mathcal M$ on $A$ such that
$$ \OO_{\widetilde Y}(\widetilde\pi^*\eta^*L)  \simeq p_A^*\mathcal M,$$
where $p_A\colon \widetilde Y\to A$ is the second projection, and note that $\mathcal M\equiv0$. Furthermore, \eqref{eq:eqeq} gives
\begin{equation}\label{eq:eqeqeq}
H^0\big(\widetilde Y, (\Omega^1_{\widetilde Y})^{\otimes p} \otimes p_A^*\mathcal M^{\otimes m}\big)\simeq H^0\big(\widetilde Y, (\Omega^1_{\widetilde Y})^{\otimes p} \otimes \OO_{\widetilde Y}(m\widetilde\pi^*\eta^* L)\big)\neq0
\end{equation}
for infinitely many $m$. If $\mathcal M$ is torsion, then $L$ is also torsion. Otherwise, let $p_{\widetilde Z}\colon \widetilde Y \to \widetilde Z$ denote the first projection, and for every positive integer $m$ consider the sheaf 
$$\mathcal F_m=p_{\widetilde Z,*}\big((\Omega^1_{\widetilde Y})^{\otimes p} \otimes p_A^*\mathcal M^{\otimes m}\big).$$
Then, since every fibre of $p_{\widetilde Z}$ is isomorphic to $A$, we obtain $H^0(\widetilde Z,\mathcal F_m)=0$, which contradicts \eqref{eq:eqeqeq}.

From now on we assume that $L \not \equiv 0$ and  follow closely the proof of Theorem \ref{thm:nonvanishingForms}. We may assume that $X$ is $\Q$-factorial. Then there is an infinite subset $\mathcal T\subseteq \N$ such that \eqref{eq:eqeq} holds for all $m\in\mathcal T$. 
%We may assume that $\mathcal T\subseteq\N$: indeed, if $L\not\equiv0$, then we can argue as in the proof of Theorem \ref{thm:nonvanishingForms}, and otherwise, we possibly replace $L$ by ${-}L$. 
We then apply Lemma \ref{lemmafund} to $\mathcal E = (\Omega^1_Y)^{\otimes p} $ and $\mathcal L = \OO_Y(\pi^*L)$, and then Proposition \ref{pro:quotient} as in Theorem \ref{thm:nonvanishingForms} 
to obtain a positive integer $\ell$, a pseudoeffective divisor $F$ and integral divisors $N_m\geq0$ such that 
$$N_m+F\sim m\pi^*L+\ell K_Y$$
for infinitely many positive integers $m$. Pushing forward this relation to $X$, we get
$$\pi_*N_m+\pi_*F\sim_\Q mL,$$
and conclude by Theorem \ref{thm:MMPtwistCY}.
\hfill $\Box$
\end{proof} 

\begin{thm} \label{thm:MMPtwistCY}
Assume the existence of good models for klt pairs in dimensions at most $n-1$. Let $X$ be a $\Q$-factorial projective klt variety of dimension $n$ such that $K_X\sim_\Q0$, and let $L$ be a nef divisor on $X$. Assume that there exist a pseudoeffective $\Q$-divisor $F$ on $X$ and an infinite subset $\mathcal S\subseteq\N$ such that 
\begin{equation}\label{eq:rel2aCY}
N_m+F\sim_\Q mL,
\end{equation}
for all $m\in\mathcal S$, where $N_m\geq0$ are integral Weil divisors. Then 
$$\kappa(X,L)=\max\{\kappa(X,N_m)\mid m\in\mathcal S\}\geq0.$$
\end{thm}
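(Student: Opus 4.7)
The plan is to mirror the proof of Theorem~\ref{thm:MMPtwist} closely, with the nef divisor $L$ playing the role of $K_X+\Delta$; the hypothesis $K_X\sim_\Q 0$ is precisely what allows the substitution to go through. From \eqref{eq:rel2aCY} one first derives $N_p-N_q\sim_\Q(p-q)L$ for $p,q\in\mathcal S$, and the same calculation as in Step~1 of that proof yields
\[
\kappa(X,L+\varepsilon N_m)\geq\kappa(X,N_m)
\]
for every $m\in\mathcal S$ and every rational $\varepsilon>0$.

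The cases where some $L+\varepsilon N_p$ is big, or where $\kappa(X,L+\varepsilon N_p)=0$, are handled exactly as in Steps~2 and 3 of Theorem~\ref{thm:MMPtwist}: in the first, $(1+\varepsilon p)L\sim_\Q L+\varepsilon N_p+\varepsilon F$ is big and hence so is $N_m$ for every $m\gg 0$; in the second, Lemma~\ref{relation} produces indices $k\neq\ell$ with $N_k\leq N_\ell$, giving $(\ell-k)L\sim_\Q N_\ell-N_k\geq 0$ and $\kappa(X,L)=0$. We therefore reduce to the case $0<\kappa(X,L+\varepsilon N_p)<n$ for all $p\in\mathcal S$ and all $\varepsilon>0$.

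For this main case, fix $\ell>k$ in $\mathcal S$ and rational $0<\varepsilon\ll 1$ such that $(X,\varepsilon N_k)$ is klt and $\varepsilon(\ell-k)>2nt$, where $tL$ is Cartier, and set $\delta=\varepsilon/(\varepsilon(\ell-k)+1)$. Using $K_X\sim_\Q 0$ we have $K_X+\delta N_k\sim_\Q\delta N_k$, and
\[
L+\varepsilon N_\ell\sim_\Q\bigl(1+\varepsilon(\ell-k)\bigr)(K_X+\delta N_k),
\]
so a $(K_X+\delta N_k)$-MMP with scaling of an ample divisor is simultaneously an $(L+\varepsilon N_\ell)$-MMP, and by Theorem~\ref{thm:lai} it terminates with a good model of $(X,\delta N_k)$. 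The crucial claim, established as in Step~4 of Theorem~\ref{thm:MMPtwist}, is that this MMP is $L$-trivial: rewriting $L+\varepsilon N_\ell\sim_\Q(1+\varepsilon(\ell-k))L+\varepsilon N_k$ and invoking the boundedness of extremal rays \cite{Kaw91}, any step on which $L$ had positive degree would force $(L+\varepsilon N_\ell)\cdot C>0$ on a bounded rational curve $C$ in the contracted ray, contradicting its negativity. Replacing $X$ by the MMP output preserves $\kappa(X,L)$, $\nu(X,L)$, the pseudoeffectivity of (the pushforward of) $F$, and the Kodaira dimensions of (the pushforwards of) the $N_m$, while making $\delta N_k$, hence $N_k$, hence $L+\varepsilon N_\ell$, semiample.

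The remainder follows Step~5 of Theorem~\ref{thm:MMPtwist}: for each sufficiently large $m\in\mathcal S$, the divisor $L+\varepsilon N_m$ is nef and, by Theorem~\ref{thm:lai} applied to $(X,\tfrac{\varepsilon}{\varepsilon(m-k)+1}N_k)$, semiample but not big. Comparing its Iitaka fibration $\varphi_m\colon X\to S_m$ with the Iitaka fibration $\varphi_\ell$ of $L+\varepsilon N_\ell$, all $\varphi_m$-contracted curves are $L$-trivial, so there is a factorisation $\varphi_\ell=\psi\circ\varphi_m$; one writes $L\sim_\Q\varphi_m^*B$ and $F\sim_\Q\varphi_m^*B_0$ with $B_0$ pseudoeffective, and then $(1+\varepsilon m)L\sim_\Q\varphi_m^*(A_m+B_0)$ with $A_m+B_0$ big on $S_m$ gives $\kappa(X,L)=\dim S_m\geq\kappa(X,N_m)$. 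The principal obstacle is the MMP step: choosing $\delta N_k$ as the auxiliary boundary so that the MMP both terminates (via Theorem~\ref{thm:lai}) and is $L$-trivial (via the extremal-ray bound), after which the Iitaka-fibration comparison together with $K_X\sim_\Q 0$ cleanly extracts the lower-dimensional base over which $L$ descends to a big class.
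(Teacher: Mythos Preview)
There is a genuine gap in the MMP step. Your displayed relation
\[
L+\varepsilon N_\ell\sim_\Q\bigl(1+\varepsilon(\ell-k)\bigr)(K_X+\delta N_k)
\]
is false: the right-hand side is $\sim_\Q\varepsilon N_k$, while the left-hand side is $\sim_\Q(1+\varepsilon(\ell-k))L+\varepsilon N_k$, and these agree only if $L\sim_\Q0$. The correct analogue of \eqref{eq:twoMMP} is $L+\varepsilon N_\ell\sim_\Q(1+\varepsilon(\ell-k))(L+\delta N_k)$, but $L+\delta N_k$ is not a log canonical divisor here since $K_X\sim_\Q0$, not $K_X\sim_\Q L$. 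Consequently a $(K_X+\delta N_k)$-MMP, i.e.\ an $N_k$-MMP, is \emph{not} an $(L+\varepsilon N_\ell)$-MMP, and your $L$-triviality argument collapses: for an $N_k$-negative extremal ray $R$ with generating curve $C$, the bound $\varepsilon N_k\cdot C\geq -2n$ together with $L\cdot C>0$ yields $\varepsilon N_\ell\cdot C>0$, but this contradicts nothing, since $R$ is not known to be $N_\ell$-negative. A secondary issue is that your case reduction only gives $\kappa(X,L+\varepsilon N_k)\geq1$, not $\kappa(X,N_k)\geq1$, so Theorem~\ref{thm:lai} may not even apply to terminate the $N_k$-MMP.

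The paper's fix is to exploit $K_X\sim_\Q0$ more directly and drop the auxiliary $L+\varepsilon N_m$ altogether: one works with the $N_m$ themselves (since $K_X+\varepsilon N_m\sim_\Q\varepsilon N_m$), reduces to $0<\kappa(X,N_p)<n$ for all $p$ in an infinite subset, and then runs the MMP for $(X,\delta N_\ell)$ rather than $(X,\delta N_k)$. An $N_\ell$-negative ray is automatically $N_k$-negative (since $N_\ell\sim_\Q N_k+(\ell-k)L$ with $L$ nef), so boundedness applied to $(X,\varepsilon N_k)$ gives the curve $C$ with $\varepsilon N_k\cdot C\geq-2n$; if $L\cdot C>0$ then $\varepsilon N_\ell\cdot C>0$, now genuinely contradicting $N_\ell$-negativity. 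After the MMP, $N_\ell$ (not $L+\varepsilon N_\ell$) is semiample, and the Iitaka-fibration comparison proceeds with $N_\ell$ and $N_m$ in place of $L+\varepsilon N_\ell$ and $L+\varepsilon N_m$.
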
 

\begin{proof}
The proof is very similar to that of Theorem \ref{thm:MMPtwist}.

\medskip

\emph{Step 1.}
Note first that \eqref{eq:rel2aCY} implies
\begin{equation}\label{eq:rel2bCY}
N_p-N_q\sim_\Q (p-q)L\quad\text{for all }p,q\in\mathcal S.
\end{equation}

There are three cases to consider. First assume that $N_p$ is big for some $p\in\mathcal S$. Then \eqref{eq:rel2aCY} implies that $L$ is big, and the result is clear. 

\medskip

\emph{Step 2.}
Now assume that $\kappa(X,N_p)=\kappa(X,N_q)=0$ for some distinct $p,q\in\mathcal S$. Let $r\in\mathcal S$ be such that $r>q$. Then by \eqref{eq:rel2bCY} we have
$$L\sim_\Q\frac{1}{q-p}(N_q-N_p)\quad\text{and}\quad L\sim_\Q \frac{1}{r-p}(N_r-N_p),$$
so that
$$(r-p)N_q\sim_\Q(q-p)N_r+(r-q)N_p\geq0.$$
Since $\kappa(X,N_q)=0$, we have $(r-p)N_q=(q-p)N_r+(r-q)N_p$, and hence $\Supp N_r\subseteq\Supp N_q$ and $\kappa(X,N_r)=0$. Therefore, for $r>q$, all divisors $N_r$ are supported on a reduced Weil divisor. By Lemma \ref{relation}, there are positive integers $k\neq\ell$ larger than $q$ in $\mathcal S$ such that $N_k\leq N_\ell$, and hence by \eqref{eq:rel2bCY},
$$(\ell-k)L\sim_\Q N_\ell-N_k\geq0,$$
which shows that $\kappa(X,L)\geq0$. Moreover, since then $\kappa(X,L)\leq\kappa(X,N_q)=0$ by \eqref{eq:rel2bCY}, we have
$$\kappa(X,L)=0.$$
If $m$ is an element of $\mathcal S$ with $m\geq q$, then $\kappa(X,N_m)=0$ by above, and if $m<q$, then $0=\kappa(X,N_q)\geq\kappa(X,N_m)$ by \eqref{eq:rel2bCY}, which yields the result.

\medskip

\emph{Step 3.}
Finally, we may assume that
\begin{equation}\label{eq:kodaira1CY}
0<\kappa(X,N_p)<n \quad\text{for every }p\in\mathcal S.
\end{equation}
Fix integers $\ell>k$ in $\mathcal S$ and fix $0<\varepsilon,\delta\ll1$ such that:
\begin{enumerate}
\item[(a)] the pair $(X,\varepsilon N_k)$ is klt, 
\item[(b)] $\varepsilon(\ell-k)>2n$, and
\item[(c)]  the pair $(X,\delta N_\ell)$ is klt.
\end{enumerate}
Fix an ample divisor $A$ on $X$, and we run the MMP with scaling of $A$ for the klt pair $(X,\delta N_\ell)$. Since we are assuming the existence of good models for klt pairs in lower dimensions, by Theorem \ref{thm:lai} our MMP with scaling of $A$ terminates with a good model for $(X,\delta N_\ell)$. 

We claim that this MMP is $L$-trivial, and hence the proper transform of $L$ at every step of this MMP is a nef Cartier divisor. Indeed, it is enough to show the claim for the first step of the MMP, as the rest is analogous. Let $c_R\colon X\to Z$ be the contraction of a $(K_X+\delta N_\ell)$-negative (hence $N_\ell$-negative) extremal ray $R$ in this MMP. Since by \eqref{eq:rel2bCY} we have
$$N_\ell\sim_\Q N_k+(\ell-k)L$$
and as $L$ is nef, the ray $R$ is also $N_k$-negative. By the boundedness of extremal rays \cite[Theorem 1]{Kaw91}, there exists a rational curve $C$ contracted by $c_R$ such that $\varepsilon N_k\cdot C=(K_X+\varepsilon N_k)\cdot C\geq {-}2n$. If $c_R$ were not $L$-trivial, then $L\cdot C\geq1$ as $L$ is Cartier. But then the condition (b) above yields
$$\varepsilon N_\ell\cdot C=\varepsilon N_k\cdot C+\varepsilon(\ell-k)L\cdot C>0,$$
a contradiction which proves the claim, i.e.\ the MMP is $L$-trivial.

\medskip

\emph{Step 4.}
In particular, the numerical Kodaira dimension and the Kodaira dimension of $L$ are preserved in the MMP, see \cite[Theorem 3.7(4)]{KM98} and \S\ref{subsec:numdim}. Hence, $L$ is not big by \eqref{eq:rel2bCY} and \eqref{eq:kodaira1CY}. Furthermore, the proper transform of $F$ is pseudoeffective. Therefore, by replacing $X$ by the resulting minimal model, we may assume that $N_\ell$ is semiample. Note also that $\kappa(X,N_m)>0$ for all $m\in\mathcal S$ by Lemma \ref{lem:pushforward}.

Fix $m\in\mathcal S$ such that $m>\ell$. Then the divisor 
$$N_m\sim_\Q N_\ell+(m-\ell)L$$
is nef. Notice that $N_m$ is not big, since otherwise $L$ would be big as in Step 1. Therefore, we have $0<\kappa(X,N_m)<n$, and pick $0<\eta\ll1$ so that the pair $(X,\eta N_m)$ is klt.  Since we are assuming the existence of good models for klt pairs in lower dimensions, by Theorem \ref{thm:lai} the divisor $\eta N_m$ is semiample. 

Let $\varphi_\ell\colon X\to S_\ell$ and $\varphi_m\colon X\to S_m$ be the Iitaka fibrations associated to $N_\ell$ and $N_m$, respectively. Then there exist ample $\Q$-divisors $A_\ell$ on $S_\ell$ and $A_m$ on $S_m$ such that 
$$N_\ell\sim_\Q\varphi_\ell^*A_\ell\quad\text{and}\quad N_m\sim_\Q\varphi_m^*A_m.$$ 
If $\xi$ is a curve on $X$ contracted by $\varphi_m$, then by \eqref{eq:rel2bCY} we have
$$0=N_m\cdot \xi=N_\ell\cdot \xi+(m-\ell)L\cdot \xi,$$
hence $N_\ell\cdot \xi=L\cdot \xi=0$. In particular, $\xi$ is contracted by $\varphi_\ell$, which implies that there exists a morphism $\psi\colon S_m\to S_\ell$ such that $\varphi_\ell=\psi\circ\varphi_m$. Therefore, denoting $B=\frac{1}{m-\ell} (A_m-\psi^*A_\ell)$, we have
$$L\sim_\Q \frac{1}{m-\ell}(N_m-N_\ell)\sim_\Q\varphi_m^*B.$$
Denoting $B_0=mB-A_m$, it is easy to check from \eqref{eq:rel2aCY} that 
$$F\sim_\Q\varphi_m^*B_0,$$
and hence $B_0$ is pseudoeffective. Therefore the divisor $A_m+B_0$ is big on $S_m$, and
$$mL\sim_\Q N_m+F\sim_\Q\varphi_m^*(A_m+B_0).$$
This yields
\begin{equation}\label{eq:equality}
\kappa(X,L)=\kappa(S_m,A_m+B_0)=\dim S_m=\kappa(X,N_m),
\end{equation}
and note that this holds for $m\in\mathcal S$ sufficiently large. In particular, $\kappa(X,L)\geq0$, and then $\kappa(X,L)\geq\kappa(X,N_p)$ for all $p\in\mathcal S$ by \eqref{eq:equality} and \eqref{eq:rel2bCY}, which finishes the proof.
\hfill $\Box$
\end{proof}

\begin{thm} \label{thm:FormsCY}
Assume the existence of good models for klt pairs in dimensions at most $n-1$. Let $X$ be a  projective klt variety of dimension $n$ such that $K_X\sim_\Q0$, and let $L$ be a nef Cartier divisor on $X$ such that $\kappa(X,L)\geq0$. 
 Let $\pi\colon Y\to X$ be a resolution of $X$. Fix a tensor representation $\mathcal E$ of $\Omega_Y^1$, and let $q$ be the rank of $\mathcal E$. Assume that there is a positive integer $m$  such that 
$$ h^0\big(Y,\mathcal E\otimes \OO_Y(m_0\pi^*L)\big) \geq q+1.$$
Then $L$ is semiample. 
\end{thm}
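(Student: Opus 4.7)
I would model the proof on that of Theorem \ref{thm:abundanceForms}, substituting the Calabi--Yau analogs Theorem \ref{thm:FormsCY0} and Theorem \ref{thm:MMPtwistCY} for Theorem \ref{thm:nonvanishingForms} and Theorem \ref{thm:MMPtwist2}. Arguing by contradiction, suppose that
$$h^0\big(Y,\mathcal E\otimes\OO_Y(m_0\pi^*L)\big)\ge q+1$$
for some sufficiently divisible $m_0$. The first move is to bootstrap nonvanishing for $L$ itself: since $\mathcal E$ embeds in a tensor power $(\Omega_Y^1)^{\otimes p}$, Theorem \ref{thm:FormsCY0} applied contrapositively forces $\kappa(X,L)\ge0$, so I can fix an effective integral Cartier divisor $L_0$ and a positive integer $s$ with $L_0\sim sL$.

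I would next carry out the linear-algebra construction driving the proof of Theorem \ref{thm:abundanceForms}. Let $\mathcal F\subseteq\mathcal E\otimes\OO_Y(m_0\pi^*L)$ be the subsheaf generated by global sections, of some rank $r$, and let $\OO_Y(N)$ be the saturation of $\det\mathcal F$ inside $\bigwedge^r\mathcal E\otimes\OO_Y(rm_0\pi^*L)$. Proposition \ref{pro:wedge} yields $h^0(Y,N)\ge2$, and hence $\kappa(Y,N)\ge1$. Putting $F_Y=rm_0\pi^*L-N$, the exact sequence
$$0\to\OO_Y(-F_Y)\to\textstyle\bigwedge^r\mathcal E\to\mathcal Q\to0$$
has torsion-free cokernel, and $K_Y$ is pseudoeffective because $X$ (hence $Y$) is non-uniruled; so Theorem \ref{thm:CP11} produces a pseudoeffective $F:=c_1(\mathcal Q)$. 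Comparing determinants, some positive integer $\ell$ satisfies $\ell K_Y\sim F-F_Y$, and therefore
$$N+F\sim rm_0\pi^*L+\ell K_Y.$$
Pushing forward along $\pi$ and using $K_X\sim_\Q0$ yields
$$N'+F'\sim_\Q rm_0L,$$
with $N'=\pi_*N$ integral effective, $\kappa(X,N')\ge1$ by Lemma \ref{lem:pushforward}, and $F'=\pi_*F$ pseudoeffective.

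The key bookkeeping move is to manufacture from this single relation an infinite family of the kind required by Theorem \ref{thm:MMPtwistCY}. Set $\mathcal S=\{rm_0+js\mid j=0,1,2,\ldots\}$ and, for $m=rm_0+js\in\mathcal S$, put $N_m=N'+jL_0$. Each $N_m$ is an integral effective Weil divisor with $\kappa(X,N_m)\ge\kappa(X,N')\ge1$, and
$$N_m+F'\sim_\Q mL\qquad\text{for all }m\in\mathcal S.$$
Theorem \ref{thm:MMPtwistCY} then gives $\kappa(X,L)=\max_{m\in\mathcal S}\kappa(X,N_m)\ge1$.

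To close the argument, I would pick $0<\eta\ll1$ rational so that the pair $(X,\eta L_0)$ is klt; then $K_X+\eta L_0\sim_\Q\eta sL$ is nef and $\kappa(X,K_X+\eta L_0)=\kappa(X,L)\ge1$, so Theorem \ref{thm:lai} implies that $\eta sL$, and hence $L$, is semiample, contradicting the hypothesis that $L$ is not semiample. The main technical obstacle I anticipate is the pseudoeffectivity of $F$ via Theorem \ref{thm:CP11} together with the careful saturation bookkeeping producing the divisor $N$, which is exactly the same delicate core as in Theorem \ref{thm:abundanceForms}; the bootstrap step via Theorem \ref{thm:FormsCY0} and the final reduction via Theorem \ref{thm:lai} are essentially routine once the framework is in place.
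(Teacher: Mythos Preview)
Your proposal is correct and follows essentially the same route as the paper's proof: both reduce to $\kappa(X,L)\geq0$ via Theorem \ref{thm:FormsCY0}, run the saturation/Proposition \ref{pro:wedge}/Theorem \ref{thm:CP11} construction from Theorem \ref{thm:abundanceForms} to produce a relation $N+F\sim_\Q rm_0\pi^*L+\ell K_Y$, manufacture an infinite family $N_m+F'\sim_\Q mL$ by adding effective multiples of $L$, invoke Theorem \ref{thm:MMPtwistCY} to get $\kappa(X,L)\geq1$, and finish with Theorem \ref{thm:lai}. The only differences are cosmetic: the paper disposes of the case $\kappa(X,L)=-\infty$ before entering the contradiction rather than inside it, and it builds the family $\tilde N_m$ on $Y$ before pushing forward rather than pushing forward first and adding $jL_0$ on $X$.
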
 

\begin{proof} 
We follow closely the proof of Theorem \ref{thm:abundanceForms}. 
Then, analogously as in that proof, there exist a Cartier divisor $N$ and an integer $1\leq r\leq n$ such that $\OO_Y(N)$ is a saturated subsheaf of $\bigwedge^r\mathcal E\otimes\OO_Y(rm_0\pi^*L)$ and
\begin{equation}\label{eq:infmany2CY}
h^0(Y,N)\geq2.
\end{equation}
Denote 
\begin{equation}\label{eq:18CY}
-F_Y=N-rm_0\pi^*L.
\end{equation}
Then we have the exact sequence
$$ 0 \to \OO_Y(-F_Y) \to \bigwedge^r\mathcal E \to \mathcal Q \to 0.$$
Since $\OO_Y(-F_Y)$ is saturated in $\bigwedge^r\mathcal E$, the sheaf $\mathcal Q$ is torsion-free, and hence $\widetilde F_Y=c_1(\mathcal Q)$ is pseudoeffective by Theorem \ref{thm:CP11}. From the above exact sequence, there exists a positive integer $\ell$ such that $\ell K_Y\sim\widetilde F_Y-F_Y$. 

Let $d$ be the smallest positive integer such that $H^0(Y,d\pi^*L)\neq0$. Denote $\mathcal S=\{rm_0+id\mid i\in\N\}\subseteq\Z$ and $\widetilde N_m=N+(m-rm_0)\pi^*L$ for $m\in\mathcal S$. From \eqref{eq:infmany2CY}, for every $m\in\mathcal S$ we have $\kappa(Y,\widetilde N_m)\geq1$, and \eqref{eq:18CY} gives
\begin{equation}\label{eq:relation2CY}
\widetilde N_m+\widetilde F_Y\sim m\pi^*L+\ell K_Y.
\end{equation}
Denote $N_m=\pi_*\widetilde N_m$ and $F=\pi_*\widetilde F_Y$. Then $F$ is pseudoeffective, and by Lemma \ref{lem:pushforward} we have $\kappa(X,N_m)\geq\kappa(Y,\widetilde N_m)\geq1$. Pushing forward the relation \eqref{eq:relation2CY} to $X$, we get
$$N_m+F\sim_\Q mL.$$
As in Steps 1, 3 and 4 of the proof of Theorem \ref{thm:MMPtwistCY}, we conclude that 
$$\kappa(X,L)=\max\{\kappa(X,N_m)\mid m\in\mathcal S\}\geq1.$$
Pick a rational number $0<\varepsilon\ll1$ such that the pair $(X,\varepsilon L)$ is klt. Then $\varepsilon L\sim_\Q K_X+\varepsilon L$ is semiample by Theorem \ref{thm:lai}, a contradiction which finishes the proof.
\hfill $\Box$
\end{proof} 

\begin{rem}{\rm
Let $X$ be a projective canonical variety of dimension $n$ such that $K_X\sim_\Q0$. Then we have
$$ h^q(X,\OO_X) = h^0\big(X,\Omega^{[q]}_X\big) \leq \binom{n}{q}\quad\text{for all }q,$$ 
and in particular, $|\chi(X,\mathcal O_X)| \leq 2^{n-1}$. 
Indeed, assume that there exists $q$ such that $h^q(X,\OO_X) > \binom{n}{q}$, and note that $h^q(X,\OO_X) = h^0\big(X,\Omega^{[q]}_X\big)$ by \cite[Proposition 6.9]{GKP16}. Then by Proposition \ref{pro:wedge} there is a positive integer $N$ and a line bundle $\mathcal L \subseteq  \big(\bigwedge^N\Omega^{[q]}_X\big)^{**}$ with $h^0(X,\mathcal L) \geq 2$. Let $C\subseteq X$ be a curve obtained as complete intersection of $n-1$ high multiples of a very ample divisor. By Miyaoka's generic semipositivity \cite{Miy87,Miy87a}, the sheaf $\big(\bigwedge^N\Omega^{[q]}_X\big)^{**}|_C$ is nef, and hence semistable with respect to any polarisation since $\det(\bigwedge^N\Omega^{[q]}_X\big)^{**}=\OO_X$. Therefore, $\big(\bigwedge^N\Omega^{[q]}_X\big)^{**}$ is semistable with respect to any polarisation by the theorem of Mehta-Ramanathan \cite{MR82,Fle84}. However, the slope of $\mathcal L$ with respect to any ample polarisation is positive, a contradiction.

In the more general setting when $X$ has klt singularities, the techniques from Sections \ref{sec:thmC} and \ref{sec:uniruled} show that $|\chi(X,\mathcal O_X)| \leq 2^{n-1}$ assuming the Minimal Model Program in dimensions at most $n-1$.}
\end{rem}

\begin{cor}\label{cor:nef}
Assume the existence of good models for klt pairs in dimensions at most $n-1$. Let $X$ be a projective klt variety of dimension $n$ such that $K_X\sim_\Q0$, and let $\mathcal L$ be a nef line bundle on $X$. 
\begin{enumerate}
\item[\rm (i)] Assume that $\mathcal L$ has a singular hermitian metric with semipositive curvature current and with algebraic singularities. If $\chi(X,\OO_X)\neq0$, then $\kappa(X,\mathcal L)\geq0$.
\item[\rm (ii)] If $\mathcal L$ is hermitian semipositive and if $\chi(X,\OO_X)\neq0$, then $\mathcal L$ is semiample.
\end{enumerate} 
\end{cor}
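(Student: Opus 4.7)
For part (i), my strategy is to mirror the proof of Corollary \ref{cor:nv} closely, replacing Theorem \ref{thm:nonvanishingForms} by Theorem \ref{thm:FormsCY0}. First I would reduce to the case where $X$ is $\Q$-factorial by passing to a small $\Q$-factorialisation, which is an isomorphism in codimension $1$ and hence preserves both $K_{X'} \sim_\Q 0$ and the algebraic singularities of the pullback metric. Assuming for contradiction that $\kappa(X, \mathcal L) = -\infty$, I would pick a resolution $\pi \colon Y \to X$ and a positive integer $m_0$ so that $m_0 \mathcal L$ is Cartier and $\pi^*\OO_X(m_0 \mathcal L)$ carries a metric $h$ with semipositive curvature current and local weights of the form $\varphi = \sum \lambda_j \log|g_j| + O(1)$, where $\lambda_j \in \Q_{\geq 0}$ and $\sum D_j$ is simple normal crossing.

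Theorem \ref{thm:FormsCY0} then yields $H^0(Y, \Omega_Y^p \otimes \pi^*\OO_X(mm_0 \mathcal L)) = 0$ for every $p \geq 0$ and every $m \neq 0$ sufficiently divisible: the case $p = 0$ is immediate from $\kappa(X, \mathcal L) = -\infty$, and for $p \geq 1$ one uses that in characteristic zero $\Omega_Y^p$ embeds as a direct summand of $(\Omega_Y^1)^{\otimes p}$. Tensoring by the ideal sheaf $\mathcal I(h^{\otimes m}) \subseteq \OO_Y$ preserves the vanishing, so Theorem \ref{thm:DPS} (hard Lefschetz) forces
\[
H^p\big(Y, \OO_Y(K_Y + mm_0 \pi^*\mathcal L) \otimes \mathcal I(h^{\otimes m})\big) = 0
\]
for every $p$ and every $m \neq 0$ sufficiently divisible. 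Substituting $\mathcal I(h^{\otimes m}) = \OO_Y(-\sum \lfloor m\lambda_j \rfloor D_j)$, applying Serre duality, and restricting $m$ to an arithmetic progression that clears the floor functions, one obtains a Cartier divisor $D$ with $\chi(Y, \OO_Y(mD)) = 0$ for infinitely many $m$; the polynomial nature of Euler characteristics then forces $\chi(Y, \OO_Y) = 0$. Since klt singularities are rational, $\chi(X, \OO_X) = \chi(Y, \OO_Y) = 0$, contradicting the hypothesis.

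For part (ii), a smooth hermitian semipositive metric corresponds to $\lambda_j = 0$ and $\mathcal I(h^{\otimes m}) = \OO_Y$, so it is a special case of the situation in (i); hence (i) already yields $\kappa(X, \mathcal L) \geq 0$. To upgrade this to semiampleness, I would take an effective $D \sim_\Q \mathcal L$ and a sufficiently small rational $\varepsilon > 0$ so that $(X, \varepsilon D)$ is klt; then $K_X + \varepsilon D \sim_\Q \varepsilon \mathcal L$ is nef. If $\kappa(X, \mathcal L) \geq 1$, Theorem \ref{thm:lai} immediately gives that $K_X + \varepsilon D$, and therefore $\mathcal L$, is semiample. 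The main obstacle is the boundary case $\kappa(X, \mathcal L) = 0$: if additionally $\nu(X, \mathcal L) = 0$ then Lemma \ref{lem:Kappa=KappaSigma} produces a good model of $(X, \varepsilon D)$ and hence semiampleness, while if $\nu(X, \mathcal L) > 0$ one must invoke a semiampleness criterion for hermitian semipositive line bundles on klt Calabi-Yau varieties with $\kappa \geq 0$, playing here the role that \cite[Theorem 5.1]{GM14} plays in the proof of Corollary \ref{cor:semipositive}.
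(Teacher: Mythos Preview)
Your proof of (i) is correct and matches the paper's argument essentially line for line: reduce to $\Q$-factorial, assume $\kappa(X,\mathcal L)=-\infty$, apply Theorem~\ref{thm:FormsCY0}, feed the vanishing into Theorem~\ref{thm:DPS}, use Serre duality and the explicit multiplier ideal to get $\chi(Y,\OO_Y(mD))=0$ on an arithmetic progression, hence $\chi(X,\OO_X)=0$.

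For (ii) your argument is also correct, but the case-split on $\kappa$ and $\nu$ is unnecessary. The paper simply observes that once $\kappa(X,\mathcal L)\geq0$ from (i), one picks an effective $L\sim_\Q\mathcal L$ and $0<\varepsilon\ll1$ with $(X,\varepsilon L)$ klt, and then \cite[Theorem~5.1]{GM14} applies directly to the pair $(X,\varepsilon L)$: the divisor $K_X+\varepsilon L\sim_\Q\varepsilon L$ is nef, hermitian semipositive, and has $\kappa\geq0$, so it is semiample. This single citation covers all of your cases at once; in particular the ``main obstacle'' you flag ($\kappa=0$, $\nu>0$) is already handled by the same result you identify at the end, so there is no need to invoke Theorem~\ref{thm:lai} or Lemma~\ref{lem:Kappa=KappaSigma} separately.
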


\begin{proof}
The proof is similar to that of Corollaries \ref{cor:nv}, \ref{cor:semipositive} and \ref{cor:chi}, so we will be quick on the details. By passing to a $\Q$-factorialisation and replacing $\mathcal L$ by its pullback, we may assume that $X$ is $\Q$-factorial.

\medskip

For (i), choose a resolution $\pi\colon Y\to X$ as in \S\ref{subsec:metric}, and let $h$ denote the induced metric on $\pi^*\mathcal L$. Then the local plurisubharmonic weights $\phi$ of $h$ are of the form
$$\phi = \sum_{j=1}^r \lambda_j \log \vert g_j \vert + O(1),$$
where $\lambda_j$ are positive rational numbers and the divisors $D_j$ defined locally by $g_j$ form a simple normal crossing divisor on $Y$. We have
\begin{equation}\label{eq:metric11}
\textstyle\mathcal I(h^{\otimes m})=\OO_Y\big(-\sum_{j=1}^r\lfloor m\lambda_j\rfloor D_j\big).
\end{equation}
Assume that $\kappa(X,\mathcal L)=-\infty$. Then by Theorem \ref{thm:FormsCY0}, for all $p\geq 0$ and for all $m$ sufficiently divisible we have
$$ H^0(Y,\Omega^p_Y \otimes \pi^*\mathcal L^{\otimes m})=0,$$
and thus
$$H^0(Y,\Omega^p_Y\otimes \pi^*\mathcal L^{\otimes m}\otimes\mathcal I(h^{\otimes m})) = 0.$$
Theorem \ref{thm:DPS} implies that for all $p\geq 0$ and for all $m$ sufficiently divisible,
$$H^p(Y,\OO_Y(K_Y)\otimes \pi^*\mathcal L^{\otimes m}\otimes\mathcal I(h^{\otimes m})) = 0,$$
which together with \eqref{eq:metric11} and Serre duality yields 
$$\textstyle\chi\big(Y,\OO_Y\big(\sum_{j=1}^r\lfloor m\lambda_j\rfloor D_j\big)\otimes \pi^*\mathcal L^{\otimes {-}m}\big) = 0$$
for all $m$ sufficiently divisible. As in the proof of Corollary \ref{cor:nv}, this then implies $\chi(X,\OO_X) = 0$, which proves (i).

\medskip

Now assume that $\mathcal L$ is hermitian semipositive and that $\chi(X,\OO_X)\neq0$. By (i) we have $\kappa(X,\mathcal L)\geq0$, and choose a Cartier divisor $L\geq0$ such that $\mathcal L\simeq\OO_X(L)$. Pick a rational number $0<\varepsilon\ll1$ such that the pair $(X,\varepsilon L)$ is klt. Then \cite[Theorem 5.1]{GM17} implies that $\varepsilon L\sim_\Q K_X+\varepsilon L$ is semiample, which proves (ii).
\hfill $\Box$
\end{proof}

\begin{rem}{\rm
Let $X$ be a projective klt variety of dimension $n$ such that $K_X\sim_\Q0$. If we drop the assumption $\chi(X,\mathcal O_X) \ne 0$, then Corollary \ref{cor:nef} fails in general: for instance, $X$ could be a torus. However, one might expect that there is a line bundle $\mathcal L'$ numerically equivalent to $\mathcal L$ such that the conclusion remains true. Furthermore, note that if $K_X \sim 0$ and if $n $ is odd, then we always have $\chi(X,\mathcal O_X) = 0$ by \cite[Corollary 6.11]{GKP16}. }
\end{rem} 

Recall that a Calabi-Yau manifold of dimension $n$ is a  simply connected projective manifold $X$ such that $K_X\sim 0$ and $h^q(X,\OO_X)=0$ for all $q=1,\dots,n-1$. By the Beauville-Bogomolov decomposition theorem, Calabi-Yau manifolds are the building blocks for all manifolds with $K_X\equiv0$, together with hyperk\"ahler manifolds and abelian varieties. 

\begin{thm}\label{thm:CYnu1} 
Let $X$ be a projective manifold with $K_X \sim_\Q 0$ and $\pi_1(X) $ finite. Let $\mathcal L$ be a nef line bundle on $X$ with $\nu(X,\mathcal L)=1$. Let $\eta\colon \widetilde X \to X$ be the universal cover and assume that the Beauville-Bogomolov decomposition is of the form
$$\widetilde X \simeq \prod X_j,$$ 
where all irreducible components $X_j$ are even-dimensional. Then $\kappa(X,\mathcal L)\geq0$.
\end{thm}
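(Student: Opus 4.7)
The plan is to reduce via the universal cover $\eta\colon\tilde X\to X$ to a single irreducible Beauville--Bogomolov factor, and then dispose of the hyperk\"ahler and Calabi--Yau cases separately. Since $\pi_1(X)$ is finite, $\eta$ is finite \'etale and Galois; the norm map carries a non-zero section of $\eta^*\mathcal L^{\otimes m}$ to a non-zero section of $\mathcal L^{\otimes m\deg\eta}$, so it suffices to prove $\kappa(\tilde X,\eta^*\mathcal L)\geq0$. Each factor $X_j$ of $\tilde X=\prod X_j$ is simply connected with $h^{0,1}(X_j)=0$: simple-connectedness combined with the universal coefficient theorem forces $H^2(X_j,\Z)$ to be torsion-free, and the exponential sequence then identifies $\Pic(X_j)=\NS(X_j)$ with a subgroup of $H^2(X_j,\Z)$, so $\Pic(X_j)$ is torsion-free. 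K\"unneth in turn gives $\Pic(\tilde X)=\bigoplus_j\Pic(X_j)$, so $\eta^*\mathcal L=\boxtimes_j\mathcal L_j$ with each $\mathcal L_j$ nef. Numerical dimensions add on a product of nef pullbacks, so $\sum_j\nu(X_j,\mathcal L_j)=1$: exactly one factor $j_0$ carries the full numerical dimension, while every other $\mathcal L_j$ is numerically trivial and hence, by torsion-freeness of $\Pic(X_j)$, actually trivial. The problem reduces to showing $\kappa(X_{j_0},\mathcal L_{j_0})\geq0$.

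If $X_{j_0}$ is hyperk\"ahler, this is Verbitsky's theorem \cite{Ver10}. Assume instead that $X_{j_0}$ is an even-dimensional Calabi--Yau manifold, so that $\chi(X_{j_0},\OO_{X_{j_0}})=1+(-1)^{\dim X_{j_0}}=2\neq0$, and argue by contradiction from $\kappa(X_{j_0},\mathcal L_{j_0})=-\infty$, as in the proof of Theorem~\ref{thm:nu1}. If some $\mathcal L_{j_0}^{\otimes m}$ carries a singular metric with semipositive curvature current and nontrivial multiplier ideal, Theorem~\ref{thm:multiplier2} produces an effective $\Q$-divisor $D$ with $m\mathcal L_{j_0}\equiv D$; torsion-freeness of $\Pic(X_{j_0})$ upgrades this to $m\mathcal L_{j_0}\sim D$, which already contradicts $\kappa=-\infty$.

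Otherwise every singular semipositive metric on every power of $\mathcal L_{j_0}$ has trivial multiplier ideal, and the central task is to establish the unconditional $\nu=1$ analogue of Theorem~\ref{thm:FormsCY0}: for all $p\geq1$ and all $m\neq0$ sufficiently divisible, $H^0(X_{j_0},(\Omega^1_{X_{j_0}})^{\otimes p}\otimes\mathcal L_{j_0}^{\otimes m})=0$. A hypothetical non-zero section, run through the saturation--determinant--pushforward construction from the proof of Theorem~\ref{thm:FormsCY0}, produces an infinite family of relations $N_m+F\sim_\Q m\mathcal L_{j_0}$ with $N_m\geq0$ integral and $F$ pseudoeffective; in place of the MMP-based Theorem~\ref{thm:MMPtwistCY} we apply the Hodge-theoretic Theorem~\ref{thm:nu1a}. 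Either some $N_m\neq0$, in which case Theorem~\ref{thm:nu1a} yields an effective $\Q$-divisor $E$ with $m\mathcal L_{j_0}\equiv E$ and torsion-freeness of $\Pic(X_{j_0})$ promotes this to $m\mathcal L_{j_0}\sim_\Q E$, contradicting $\kappa=-\infty$; or $N_m=0$ for every $m$ in the relevant infinite set, in which case comparing two relations $F\sim_\Q m_1\mathcal L_{j_0}$ and $F\sim_\Q m_2\mathcal L_{j_0}$ forces $\mathcal L_{j_0}\sim_\Q0$, an equally fatal contradiction. With this vanishing in hand, Theorem~\ref{thm:DPS} applied to $\mathcal L_{j_0}^{\otimes m}$ with its trivial-multiplier-ideal metric, together with Serre duality and $K_{X_{j_0}}\sim_\Q0$, yields $\chi(X_{j_0},\mathcal L_{j_0}^{\otimes m})=0$ for all sufficiently divisible $m$; since $c_1(K_{X_{j_0}})=0$ makes $m\mapsto\chi(X_{j_0},\mathcal L_{j_0}^{\otimes m})$ polynomial, this forces $\chi(X_{j_0},\OO_{X_{j_0}})=0$, contradicting $\chi=2$.

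The main obstacle is precisely the unconditional Case~B nonvanishing: the MMP scaling argument of Theorem~\ref{thm:MMPtwistCY} must be replaced by the Hodge-theoretic Theorem~\ref{thm:nu1a}, which explains why the hypothesis $\nu(X,\mathcal L)=1$ is indispensable to the approach. The even-dimensionality assumption is equally crucial, since odd-dimensional Calabi--Yau manifolds have $\chi(\OO)=0$ and the final numerical contradiction in Case~B would collapse; by contrast, the hyperk\"ahler factors are handled uniformly by Verbitsky's theorem, so the even-dimensionality restriction there is automatic.
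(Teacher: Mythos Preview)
Your argument is correct and its technical core---replacing Theorem~\ref{thm:MMPtwistCY} by Theorem~\ref{thm:nu1a} inside the proof of Theorem~\ref{thm:FormsCY0}, and coupling this with the multiplier-ideal dichotomy as in Theorem~\ref{thm:nu1}---coincides with the paper's. The route to that core differs: the paper does \emph{not} split $\tilde X$ into its Beauville--Bogomolov factors. After replacing $X$ by $\tilde X$, it simply observes that $\chi(\tilde X,\OO_{\tilde X})=\prod_j\chi(X_j,\OO_{X_j})>0$ (each factor being even-dimensional Calabi--Yau or hyperk\"ahler) and $h^1(\tilde X,\OO_{\tilde X})=0$, so numerical and linear equivalence agree on $\tilde X$; it then runs the whole dichotomy directly on the product. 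This is more uniform: hyperk\"ahler and Calabi--Yau factors are treated on the same footing via $\chi\neq0$, and Verbitsky is never invoked. Your decomposition buys a more explicit explanation of where even-dimensionality enters, at the cost of having to handle the hyperk\"ahler factor separately.

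That separate handling is where your one imprecision lies. As cited in this paper, \cite{Ver10} yields $\kappa\geq0$ for \emph{hermitian semipositive} line bundles on hyperk\"ahler manifolds, not for arbitrary nef ones, so your appeal to it for $\mathcal L_{j_0}$ is not justified as stated. Two easy repairs are available. First, hyperk\"ahler manifolds also satisfy $\chi(\OO)>0$ and $h^1(\OO)=0$, so your Calabi--Yau argument applies to them verbatim and no case distinction is needed; this is effectively what the paper does. Second, as the paper remarks immediately after the theorem, \cite[Lemma~1]{Mat99} shows that a nef line bundle of numerical dimension~$1$ on a hyperk\"ahler manifold of dimension $\geq4$ is trivial, so the only possible hyperk\"ahler $X_{j_0}$ carrying $\nu=1$ is a K3 surface, where abundance is classical.
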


\begin{proof} 
Replacing $X$ by $\widetilde X$ and observing that $\kappa(X,\mathcal L) = \kappa(\widetilde X,\eta^*\mathcal L)$ and $\nu(X,\mathcal L) = \nu(\widetilde X,\eta^*\mathcal L)$, we may assume that $X$ is simply connected. Since all $X_j$ are even-dimensional Calabi-Yau or hyperk\"ahler manifolds, we have $\chi(X_j,\OO_{X_j}) > 0$ for all $j$, hence $\chi(X,\OO_X) > 0$. Since $h^1(X,\OO_X)=0$, numerical and linear equivalence of divisors on $X$ coincide.

If there exist a positive integer $m$ and a singular metric $h$ on $\mathcal L^{\otimes m}$ with semipositive curvature current, such that the multiplier ideal sheaf $\mathcal I(h)$ is different from $\OO_X$, then the result follows from Corollary \ref{cor:multiplier}. 

Otherwise, pick  a singular metric $h$ on $\mathcal L$ with semipositive curvature current. Then the result follows from the proofs of Corollary \ref{cor:nef}(i) and Theorem \ref{thm:FormsCY0}, by invoking Theorem \ref{thm:nu1a} instead of Theorem \ref{thm:MMPtwistCY} in the proof of Theorem \ref{thm:FormsCY0}. Note that since $\mathcal I(h^{\otimes\ell})=\OO_Y$ for all $\ell$, the hypothesis in Corollary \ref{cor:nef}(i) that $h$ has algebraic singularities is not necessary.
\hfill $\Box$
\end{proof}

\begin{rem}{\rm
Note that if in the theorem above $X_j$ is a hyperk\"ahler manifold of dimension $\geq4$ for some $j$, then $\eta^*\mathcal L|_{X_j}\simeq\OO_{X_j}$ by \cite[Lemma 1]{Mat99}.}
\end{rem}

We also have the following generalization of Theorem \ref{thm:CYnu1}. 

\begin{thm}
Let $X$ be a projective manifold with $K_X \sim_\Q 0$ and let $\mathcal L$ be a nef line bundle on $X$ with $\nu(X,\mathcal L)=1$. Let $\eta\colon \widetilde X \to X$ be a finite \'etale cover such that the Beauville-Bogomolov decomposition is of the form
$$\widetilde X \simeq T\times \prod X_j,$$ 
where the $X_j$ are even-dimensional Calabi-Yau manifolds or hyperk\"ahler manifolds, and $T$ is an abelian variety. Then 
there exists a line bundle $\mathcal L'$ numerically equivalent to $\mathcal L$ such that $\kappa (X,\mathcal L') \geq 0$. 
\end{thm}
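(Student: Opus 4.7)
After replacing $\tilde X$ by its Galois closure over $X$ (an étale cover of $\tilde X$ which preserves the product structure, since the factors $X_j$ are simply connected and étale covers of products of tori and simply connected manifolds are again of this form), we may assume $\eta$ is Galois with group $G$. The plan is to construct on $\tilde X$ an effective line bundle $\mathcal M$ whose numerical class is a positive integer multiple of $\eta^*[\mathcal L]$ and which satisfies $\kappa(\tilde X,\mathcal M)\geq0$, and then to descend to $X$ by combining Galois averaging with the divisibility of $\Pic^0(X)$.

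Write $\tilde X=T\times Y$ with $Y=\prod_j X_j$ and projections $p_1,p_2$. Since $h^1(Y,\OO_Y)=0$ (each CY or HK factor has $h^{0,1}=0$), the Künneth--Hodge decomposition yields $\NS(\tilde X)_\Q=p_1^*\NS(T)_\Q\oplus p_2^*\NS(Y)_\Q$, and we write $[\eta^*\mathcal L]=p_1^*\alpha+p_2^*\beta$, with $\alpha$ and $\beta$ nef by restriction to fibres. A direct intersection-theoretic calculation with an ample class of product form shows that the cross-term $(p_1^*\alpha)(p_2^*\beta)\cdot A^{\dim\tilde X-2}$ is a positive multiple of $(\alpha\cdot H_T^{\dim T-1})(\beta\cdot H_Y^{\dim Y-1})$, so $\nu(\mathcal L)=1$ forces exactly one of $\alpha,\beta$ to be numerically trivial. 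If $\alpha\equiv0$, then $\beta$ is nef on $Y$ with $\nu_Y(\beta)=1$; since $Y$ is simply connected with Beauville--Bogomolov decomposition comprising only even-dimensional factors, Theorem~\ref{thm:CYnu1} applied to any line bundle $\mathcal M_Y$ representing a sufficiently divisible integer multiple of $\beta$ gives $\kappa(Y,\mathcal M_Y)\geq0$, and we set $\mathcal M:=p_2^*\mathcal M_Y$. If instead $\beta\equiv0$, the classical structure theorem for nef classes on the abelian variety $T$ produces a surjective homomorphism $\phi\colon T\to E$ onto an elliptic curve and an ample line bundle $\mathcal M_E$ on $E$ with $[\phi^*\mathcal M_E]=m\alpha$ for some $m\geq1$; since $\mathcal M_E$ is effective, so is $\mathcal M:=p_1^*\phi^*\mathcal M_E$. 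In both cases $[\mathcal M]=m\cdot\eta^*[\mathcal L]$ in $\NS(\tilde X)$ for some $m\geq1$, and $\kappa(\tilde X,\mathcal M)\geq0$.

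Pick $k\geq1$ with $H^0(\tilde X,\mathcal M^{\otimes k})\neq0$; a non-zero section defines an effective divisor $D\sim k\mathcal M$, and $D':=\sum_{g\in G}g^*D$ is a $G$-invariant effective divisor on $\tilde X$ which descends to an effective divisor $E$ on $X$ with $\eta^*E=D'$. Since $g^*\eta^*\mathcal L=\eta^*\mathcal L$ and $[\mathcal M]=m\cdot\eta^*[\mathcal L]$, we have $[g^*\mathcal M]=[\mathcal M]$ for every $g\in G$, hence $[D']=k|G|m\cdot\eta^*[\mathcal L]$; the injectivity of $\eta^*\colon N^1(X)_\R\to N^1(\tilde X)_\R$ (a consequence of $\eta_*\eta^*=|G|\cdot\mathrm{id}$) then forces $[E]=k|G|m\cdot[\mathcal L]$ in $\NS(X)$. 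Thus $\mathcal N:=\OO_X(E)\otimes\mathcal L^{-k|G|m}\in\Pic^0(X)$; since $\Pic^0(X)$ is an abelian variety, multiplication by $k|G|m$ is surjective on it, so we may choose $\mathcal N'\in\Pic^0(X)$ with $(\mathcal N')^{\otimes k|G|m}=\mathcal N$. Setting $\mathcal L':=\mathcal L\otimes\mathcal N'$ yields $\mathcal L'\equiv\mathcal L$ numerically and $(\mathcal L')^{\otimes k|G|m}=\OO_X(E)$ effective, so $\kappa(X,\mathcal L')\geq0$.

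The main obstacle is that $\mathcal M$ is not $G$-equivariant --- only its numerical class is --- and therefore does not descend directly as a line bundle. The workaround is to carry out the averaging at the level of divisors, producing an effective divisor on $X$ whose class is a positive integer multiple of $[\mathcal L]$; the divisibility of the abelian variety $\Pic^0(X)$ is then used to extract a $k|G|m$-th root and land in the class $[\mathcal L]$ itself.
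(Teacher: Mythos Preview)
Your argument is correct (modulo one small slip noted below), but it takes a more circuitous route than the paper's.

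The paper exploits the simple connectedness of $Y=\prod X_j$ more fully: by \cite[Exercise III.12.6]{Har77} one obtains a decomposition of the \emph{line bundle} $\eta^*\mathcal L\simeq p_1^*\mathcal M\otimes p_2^*\mathcal N$, not merely of its numerical class. This makes the case split unnecessary: restricting to a fibre of $p_1$ gives $\nu(Y,\mathcal M)\leq\nu(\tilde X,\eta^*\mathcal L)=1$, so Theorem~\ref{thm:CYnu1} (or the trivial case $\nu=0$) yields $\kappa(Y,\mathcal M)\geq0$; and on the abelian variety $T$ every nef class is represented by a semiample line bundle. Combining these gives an effective $D\equiv\eta^*\mathcal L$ directly. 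The paper then descends via the finite pushforward $\eta_*D$ rather than Galois averaging, so no passage to a Galois closure is needed. Your intersection-theoretic argument that exactly one of $\alpha,\beta$ must be numerically trivial is correct and pleasant, but the paper sidesteps it entirely.

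One technical point: from $\eta^*[E]=k|G|m\cdot\eta^*[\mathcal L]$ and injectivity of $\eta^*$ on $N^1(X)_\R$ you only get equality in $N^1(X)$, so $\mathcal N=\OO_X(E)\otimes\mathcal L^{-k|G|m}$ is a priori only in $\Pic^\tau(X)$, not in $\Pic^0(X)$. The fix is immediate: some power $\mathcal N^{\otimes t}$ lies in $\Pic^0(X)$, and choosing $\mathcal N'\in\Pic^0(X)$ with $(\mathcal N')^{\otimes k|G|mt}=\mathcal N^{\otimes t}$ gives $(\mathcal L\otimes\mathcal N')^{\otimes k|G|mt}\simeq\OO_X(tE)$. (The paper's proof leaves this entire root-extraction step implicit, ending with $\mathcal L^{\otimes\deg\eta}\equiv\OO_X(\eta_*D)$, so your treatment is actually more careful here.)
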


\begin{proof}
Denote $Y =  \prod X_j$, and let $p_1\colon Y \times T \to Y$ and $p_2\colon Y \times T \to T$  be the projections. Since $Y$ is simply connected, by \cite[Exercise III.12.6]{Har77} there exist line bundles $\mathcal M$ and $\mathcal N$ on $Y$ and $T$, respectively, such that
$$\eta^*\mathcal L \simeq p_1^*\mathcal M \otimes p_2^*\mathcal N.$$ 
By restricting this relation to fibres of $p_1$ and $p_2$, we obtain that $\mathcal M$ and $\mathcal N$ are nef. As $T$ is an abelian variety, there exists a semiample line bundle $\mathcal N'$ on $T$ numerically equivalent to $\mathcal N$, and since $\nu(Y,\mathcal M) \leq 1$, we have $\kappa(Y,\mathcal M) \geq 0$ by Theorem \ref{thm:CYnu1}. Therefore, there exists a divisor $D$ on $\widetilde X$ with $\kappa(\widetilde X,D)\geq0$ such that $\eta^*\mathcal L\equiv\OO_{\widetilde X}(D)$, hence $\mathcal L^{\otimes\deg\eta}\equiv\OO_X(\eta_*D)$. This finishes the proof.
\hfill $\Box$
\end{proof}

Following \cite[\S8]{GKP16}, we say that a canonical variety $X$ of dimension $n$ and with $K_X\sim0$ is a Calabi-Yau variety if for every quasi-\'etale cover $\widetilde X\to X$ we have $H^0(\widetilde X,\Omega_{\widetilde X}^{[q]}) = 0$ for $q=1,\dots,n -1$; and that it is a singular irreducible symplectic variety if there is a non-degenerate reflexive holomorphic $2$-form $\omega$ on $X$ such that for every quasi-\'etale cover $f\colon \widetilde X\to X$, every reflexive holomorphic form on $\widetilde X$ is of the form $c f^*\omega^{[p]}$ with a constant $c$. These, together with abelian varieties, are conjecturally the building blocks of singular varieties with trivial canonical class. Then we have:

\begin{thm} \label{sing} 
Let $X$ be a normal projective klt variety such that $K_X \sim_\Q 0$. Suppose that there exists a quasi-\'etale cover $\eta\colon \widetilde X \to X$, such that $\widetilde X$ is either a Calabi-Yau variety of even dimension or a singular irreducible symplectic variety. Let $\mathcal L$ be a nef line bundle on $X$ with $\nu(X,\mathcal L)=1$. Then $\kappa(X,\mathcal L)\geq0$.
\end{thm}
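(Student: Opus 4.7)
The plan is to extend the argument of Theorem \ref{thm:CYnu1} from the smooth simply connected setting to the klt Calabi--Yau/symplectic setting treated here. I would first replace $X$ by $\tilde X$: the equalities $\kappa(X,\mathcal L) = \kappa(\tilde X, \eta^*\mathcal L)$ and $\nu(X,\mathcal L) = \nu(\tilde X, \eta^*\mathcal L) = 1$ hold because $\eta$ is finite surjective, and $K_{\tilde X} \sim_\Q 0$ because $\eta$ is quasi-\'etale. Passing further to a $\Q$-factorialisation, which is small (an isomorphism in codimension $1$) and therefore preserves the klt property, the triviality of the canonical class, and, through the correspondence between quasi-\'etale covers and between reflexive $q$-forms under small modifications, also the Calabi--Yau or singular symplectic structure, I may assume $X = \tilde X$ is itself $\Q$-factorial klt and of one of the two given types.

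Under these assumptions the dimension $n$ is even, and the Hodge-type identity $h^q(X,\mathcal O_X) = h^0(X, \Omega_X^{[q]})$ of \cite[Proposition 6.9]{GKP11}, together with the vanishing/nonvanishing properties of reflexive differentials on these varieties, gives $\chi(X,\mathcal O_X) = 2$ in the Calabi--Yau case and $\chi(X,\mathcal O_X) = n/2 + 1$ in the singular symplectic case; in both cases $\chi(X,\mathcal O_X) \neq 0$ and $h^1(X,\mathcal O_X) = 0$. The vanishing of $h^1$ implies that $\Pic^0(X)$ is trivial, so numerical and $\Q$-linear equivalence of $\Q$-Cartier divisors on $X$ coincide; in particular any effective $\Q$-divisor numerically equivalent to a $\Q$-multiple of $\mathcal L$ will produce sections of some positive power of $\mathcal L$.

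Next, I would dichotomise on the existence of singular metrics on $\mathcal L$ with non-trivial multiplier ideal. If some resolution $\pi\colon Y\to X$ and some singular hermitian metric $h$ on $\pi^*\mathcal L^{\otimes m}$ with semipositive curvature current satisfy $\mathcal I(h) \neq \OO_Y$, then Theorem \ref{thm:multiplier2} applied to the nef divisor $m\pi^*\mathcal L$, followed by pushforward to $X$, yields an effective $\Q$-divisor numerically equivalent to $m\mathcal L$; the vanishing of $\Pic^0(X)$ converts this to $\Q$-linear equivalence, hence $\kappa(X,\mathcal L) \geq 0$. Otherwise every such multiplier ideal is trivial, and I would run the proofs of Corollary \ref{cor:nef}(i) and Theorem \ref{thm:FormsCY0} verbatim --- the triviality of the multiplier ideal makes the algebraic-singularities hypothesis of Corollary \ref{cor:nef}(i) unnecessary --- substituting Theorem \ref{thm:nu1a} for the final application of Theorem \ref{thm:MMPtwistCY}. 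This yields on $X$ an effective divisor numerically equivalent to a positive multiple of $\mathcal L$, and the vanishing of $\Pic^0(X)$ again concludes the argument.

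The main obstacle is verifying that the ingredients of Section \ref{sec:CY} survive in the singular klt setting and, conceptually, that Theorem \ref{thm:nu1a} can genuinely replace the MMP-based Theorem \ref{thm:MMPtwistCY}. The Campana--Paun positivity of Theorem \ref{thm:CP11} and the Lefschetz-type surjection of Theorem \ref{thm:DPS} are both applied on the smooth resolution $Y$ and are therefore unaffected; the Hodge symmetry is available directly from \cite[Proposition 6.9]{GKP11}, and the identification of numerical with $\Q$-linear equivalence is the key consequence of $h^1(X, \mathcal O_X) = 0$. What makes the conclusion unconditional, despite the absence of any lower-dimensional MMP hypothesis, is precisely that in the $\nu=1$ setting one can finish via the Hodge-index-theorem input of Theorem \ref{thm:nu1a} rather than through the MMP with scaling of Theorem \ref{thm:MMPtwistCY}.
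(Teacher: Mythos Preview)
Your proposal is correct and follows essentially the same approach as the paper. The paper's proof is terse: it observes via \cite[Proposition~6.9]{GKP11} that $\chi(\tilde X,\OO_{\tilde X})>0$ and then says ``we conclude as in Theorem~\ref{thm:CYnu1}'', which unpacks to exactly the dichotomy on the multiplier ideal and the substitution of Theorem~\ref{thm:nu1a} for Theorem~\ref{thm:MMPtwistCY} that you spell out; your explicit passage to a $\Q$-factorialisation and your use of $h^1=0$ to identify numerical with $\Q$-linear equivalence fill in details the paper leaves implicit. One small remark: you do not actually need to verify that the Calabi--Yau or symplectic structure survives the $\Q$-factorialisation---all that is needed afterwards is $\chi\neq0$ and $h^1=0$, and these are preserved because both $\tilde X$ and its $\Q$-factorialisation have rational singularities.
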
 
 
\begin{proof} 
By \cite[Proposition 6.9]{GKP16} we have
$$ H^0(\widetilde X,\Omega^{[q]}_{\widetilde X}) \simeq H^q({\widetilde X},\OO_{\widetilde X}). $$
Therefore $\chi(\widetilde X,\OO_{\widetilde X}) > 0$, and we conclude as in Theorem \ref{thm:CYnu1} that $\kappa(X,\mathcal L)=\kappa(\widetilde X,\eta^*\mathcal L)\geq0$.
\hfill $\Box$
\end{proof} 
 
We also note:
 
\begin{cor} \label{cor:nef1}  
Let $X$ be a projective klt variety of dimension $4$ such that $K_X\sim_\Q0$, and let $\mathcal L$ be a nef line bundle on $X$ with $\nu(X,\mathcal L)=1$. Let $\eta\colon \widetilde X \to X$ be the canonical cover and assume that $h^1(\widetilde X,\OO_{\widetilde X}) = 0$. Then $\kappa (X,\mathcal L) \geq 0$. If $\mathcal L$ is hermitian semipositive, then it is semiample.
\end{cor}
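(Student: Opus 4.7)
The plan is to reduce to the canonical cover of $X$, where the numerical-dimension-one tools of Section \ref{sec:nd1} combined with Hard Lefschetz (in the style of Corollary \ref{cor:nef}(i)) allow us to bypass the MMP assumption by using Theorem \ref{thm:nu1a} in place of Theorem \ref{thm:MMPtwistCY}. First I will pass to the canonical cover $\eta\colon\tilde X\to X$, which is quasi-\'etale with $K_{\tilde X}\sim 0$ and $\tilde X$ klt; after taking a small $\Q$-factorialization I may further assume $\tilde X$ is $\Q$-factorial, which does not affect $h^i(\OO)$ by rational singularities. Since $\eta$ is finite, $\kappa(X,\mathcal L)=\kappa(\tilde X,\eta^*\mathcal L)$ and $\nu(\tilde X,\eta^*\mathcal L)=1$, so it suffices to show $\kappa(\tilde X,\eta^*\mathcal L)\geq 0$. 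The assumption $h^1(\tilde X,\OO_{\tilde X})=0$ and Serre duality on the Cohen--Macaulay klt variety $\tilde X$ give $h^3=h^1=0$ and $h^0=h^4=1$, hence
$$\chi(\tilde X,\OO_{\tilde X})=2+h^2(\tilde X,\OO_{\tilde X})\geq 2>0.$$
Moreover, $h^1(\tilde X,\OO_{\tilde X})=0$ forces $\mathrm{Pic}^0(\tilde X)=0$, so any $\Q$-divisor on $\tilde X$ numerically equivalent to an effective one has nonnegative Kodaira dimension; I will refer to this as the Picard remark.

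Assuming for contradiction $\kappa(\tilde X,\eta^*\mathcal L)=-\infty$, I will argue by the dichotomy used in the proof of Theorem \ref{thm:nu1}. In Case A, there exist a resolution $\pi\colon Y\to\tilde X$, an integer $m\geq 1$ with $m\eta^*\mathcal L$ Cartier, and a singular metric $h$ on $\pi^*(m\eta^*\mathcal L)$ with semipositive curvature current and $\mathcal I(h)\neq\OO_Y$; applying Theorem \ref{thm:multiplier2} to the nef divisor $m\eta^*\mathcal L$ produces an effective $\Q$-divisor $D$ with $m\eta^*\mathcal L\equiv D$, and the Picard remark forces $\kappa(\tilde X,\eta^*\mathcal L)\geq 0$, a contradiction. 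In Case B, every such multiplier ideal is trivial; I pick any singular metric $h$ on $\pi^*\eta^*\mathcal L$ with semipositive curvature current (which exists by nefness), so that $\mathcal I(h^{\otimes m})=\OO_Y$ for every $m\geq 1$. I then claim the forms vanishing
$$H^0\bigl(Y,(\Omega^1_Y)^{\otimes p}\otimes\OO_Y(m\pi^*\eta^*\mathcal L)\bigr)=0$$
for all $p\geq 1$ and all $m\neq 0$ sufficiently divisible. This is obtained by repeating the proof of Theorem \ref{thm:FormsCY0}: the saturation step produces on $\tilde X$ effective divisors $N_m$ and a pseudoeffective $\Q$-divisor $F$ with $N_m+F\sim_\Q m\eta^*\mathcal L$ for $m$ in an infinite subset $\mathcal S$, and the appeal to Theorem \ref{thm:MMPtwistCY} is replaced by Theorem \ref{thm:nu1a}, which in the generic case $N_m\neq 0$ yields an effective $E\geq 0$ with $m\eta^*\mathcal L\equiv E$, while in the degenerate case $N_m=0$ for all $m\in\mathcal S$ two distinct relations $F\sim_\Q m_i\eta^*\mathcal L$ force $\eta^*\mathcal L\sim_\Q 0$; in either case the Picard remark gives $\kappa(\tilde X,\eta^*\mathcal L)\geq 0$, a contradiction.

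With the forms vanishing in hand, Theorem \ref{thm:DPS} (applicable since $\mathcal I(h^{\otimes m})=\OO_Y$) and Serre duality yield, exactly as in the proof of Corollary \ref{cor:nef}(i), that $\chi(Y,\pi^*\eta^*\mathcal L^{\otimes -m})=0$ for all sufficiently divisible $m$; since this is a polynomial in $m$ by Hirzebruch--Riemann--Roch, it vanishes identically, so $\chi(\tilde X,\OO_{\tilde X})=\chi(Y,\OO_Y)=0$ by rational singularities, contradicting the first paragraph and proving $\kappa(X,\mathcal L)\geq 0$. For the semiampleness statement, with $\mathcal L\simeq\OO_X(L)$ hermitian semipositive and $\kappa(X,\mathcal L)\geq 0$ in hand, I pick an effective Cartier $D$ and $m\geq 1$ with $mL\sim D$, choose $0<\varepsilon\ll 1$ so that $(X,\varepsilon D/m)$ is klt, and apply \cite[Theorem 5.1]{GM14} to the hermitian semipositive divisor $K_X+\varepsilon D/m\sim_\Q\varepsilon L$, concluding that $\mathcal L$ is semiample. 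The main technical obstacle is Case B: carrying out the proof of Theorem \ref{thm:FormsCY0} with Theorem \ref{thm:nu1a} substituted for Theorem \ref{thm:MMPtwistCY}, including the borderline situation $N_m\equiv 0$ for all $m\in\mathcal S$.
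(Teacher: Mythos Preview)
Your proof is correct and follows essentially the same route as the paper: reduce to the canonical cover, establish $\chi(\tilde X,\OO_{\tilde X})>0$, then run the dichotomy of Theorem~\ref{thm:nu1}/Theorem~\ref{thm:CYnu1} (nontrivial multiplier ideal via Theorem~\ref{thm:multiplier2}, otherwise forms vanishing via Theorem~\ref{thm:nu1a} in place of Theorem~\ref{thm:MMPtwistCY}, then Hard Lefschetz plus Serre duality). The only cosmetic difference is that you obtain $h^3(\tilde X,\OO_{\tilde X})=h^1(\tilde X,\OO_{\tilde X})$ directly from Serre duality on the Cohen--Macaulay variety $\tilde X$ with $\omega_{\tilde X}\simeq\OO_{\tilde X}$, whereas the paper cites \cite[Proposition~6.9 and Corollary~6.11]{GKP11}; your argument is in fact slightly more elementary here.
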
 

\begin{proof} 
By \cite[Proposition 6.9 and Corollary 6.11]{GKP16}, we have
$$ h^3(\widetilde X,\mathcal O_{\widetilde X}) = h^1(\widetilde X, \mathcal O_{\widetilde X}) = 0. $$
This implies $\kappa(X,\mathcal L)\geq0$ as in the proof of Theorem \ref{sing}. The last claim follows as in the proof of Corollary \ref{cor:nef}(ii).
\hfill $\Box$
\end{proof} 

\begin{rem}{\rm
Several results of this section can be stated appropriately for klt pairs $(X,\Delta)$ such that $K_X+\Delta\sim_\Q0$; they are often called \emph{varieties of Calabi-Yau type}. The proofs of those generalisations are straightforward adaptations of the proofs presented above.}
\end{rem}

\vskip 5mm

\noindent {\bf Acknowledgements.} Lazi\'c was supported by the DFG-Emmy-Noether-Nachwuchsgruppe ``Gute Strukturen in der h\"oherdimensionalen birationalen Geometrie". Peternell was supported by the DFG grant ``Zur Positivit\"at in der komplexen Geometrie". We would like to thank D.\ Huybrechts, B.\ Taji and L.\ Tasin for useful conversations related to this work. Last but not least, we would like to thank the referee whose comments were extremely valuable.

\providecommand{\bysame}{\leavevmode\hbox to3em{\hrulefill}\thinspace}
%\providecommand{\MR}{\relax\ifhmode\unskip\space\fi MR }
% \MRhref is called by the amsart/book/proc definition of \MR.
%\providecommand{\MRhref}[2]{%
%  \href{http://www.ams.org/mathscinet-getitem?mr=#1}{#2}
%}
%\providecommand{\href}[2]{#2}
%\begin{thebibliography}{{Kem}92}
%
%
%\end{thebibliography}

\bibliographystyle{amsalpha}

\begin{thebibliography}{BCHM10}

\bibitem[Amb05]{Amb05a}
F.~Ambro, \emph{The moduli {$b$}-divisor of an lc-trivial fibration}, Compos.
  Math. \textbf{141} (2005), no.~2, 385--403.
  \MR{2134273}

\bibitem[BCHM10]{BCHM}
C.~Birkar, P.~Cascini, C.~D. Hacon, and J.~M\textsuperscript{c}Kernan,
  \emph{Existence of minimal models for varieties of log general type}, J.
  Amer. Math. Soc. \textbf{23} (2010), no.~2, 405--468.
  \MR{2601039}

\bibitem[BDPP13]{BDPP}
S.~Boucksom, J.-P. Demailly, M.~P{\u{a}}un, and Th. Peternell, \emph{The
  pseudo-effective cone of a compact {K}\"ahler manifold and varieties of
  negative {K}odaira dimension}, J. Algebraic Geom. \textbf{22} (2013), no.~2,
  201--248.
  \MR{3019449}

\bibitem[Bir11]{Bir11}
C.~Birkar, \emph{On existence of log minimal models {II}},  J. Reine Angew. Math.
  \textbf{658} (2011), 99--113.
  \MR{2831514}

\bibitem[Bou98]{Bou98}
N.~Bourbaki, \emph{Algebra {I}. {C}hapters 1--3}, Elements of Mathematics
  (Berlin), Springer-Verlag, Berlin, 1998.
  \MR{1727844}



\bibitem[Cam95]{Cam95}
F.~Campana, \emph{Fundamental group and positivity of cotangent bundles of
  compact {K}\"ahler manifolds}, J. Algebraic Geom. \textbf{4} (1995), no.~3,
  487--502.
  \MR{1325789}

\bibitem[CKP12]{CKP12}
F.~Campana, V.~Koziarz, and M.~P{\u{a}}un, \emph{Numerical character of the
  effectivity of adjoint line bundles}, Ann. Inst. Fourier (Grenoble)
  \textbf{62} (2012), no.~1, 107--119.
  \MR{2986267}

\bibitem[CL12]{CL12a}
P.~Cascini and V.~Lazi{\'c}, \emph{New outlook on the {M}inimal {M}odel
  {P}rogram, {I}}, Duke Math. J. \textbf{161} (2012), no.~12, 2415--2467.
  \MR{2972461}

\bibitem[CL13]{CL13}
A.~Corti and V.~Lazi{\'c}, \emph{New outlook on the {M}inimal {M}odel
  {P}rogram, {II}}, Math. Ann. \textbf{356} (2013), no.~2, 617--633.
  \MR{3048609}

\bibitem[CP11]{CP11}
F.~Campana and Th. Peternell, \emph{Geometric stability of the cotangent bundle
  and the universal cover of a projective manifold}, Bull. Soc. Math. France
  \textbf{139} (2011), no.~1, 41--74. With an appendix by Matei Toma.
  \MR{2815027}

\bibitem[CP15]{CP15}
F.~Campana and M.~P\u{a}un, \emph{Foliations with positive slopes and
  birational stability of orbifold cotangent bundles}, preprint 2015.
  \href{https://arxiv.org/abs/1508.02456}{arXiv:1508.02456}


\bibitem[Dem01]{Dem01}
J.-P. Demailly, \emph{Multiplier ideal sheaves and analytic methods in
  algebraic geometry}. In: School on {V}anishing {T}heorems and {E}ffective
  {R}esults in {A}lgebraic {G}eometry ({T}rieste, 2000), pp. 1--148, ICTP Lect. Notes,
  vol.~6, Abdus Salam Int. Cent. Theoret. Phys., Trieste, 2001.
\MR{1919457}

\bibitem[DHP13]{DHP13}
J.-P. Demailly, C.~D. Hacon, and M.~P{\u{a}}un, \emph{Extension theorems,
  non-vanishing and the existence of good minimal models}, Acta Math.
  \textbf{210} (2013), no.~2, 203--259.
  \MR{3070567}

\bibitem[DL15]{DL15}
T.~Dorsch and V.~Lazi\'c, \emph{A note on the abundance conjecture}, Algebr. Geom. \textbf{2} (2015), no.~4, 476--488.
  \MR{3403237}


\bibitem[DPS01]{DPS01}
J.-P. Demailly, Th. Peternell, and M.~Schneider, \emph{Pseudo-effective line
  bundles on compact K\"ahler manifolds}, Internat. J. Math. \textbf{12} (2001),
  no.~6, 689--741.
  \MR{1875649} 

\bibitem[Dru11]{Dru11}
S.~Druel, \emph{Quelques remarques sur la d\'ecomposition de {Z}ariski
  divisorielle sur les vari\'et\'es dont la premi\`ere classe de {C}hern est
  nulle}, Math. Z. \textbf{267} (2011), no.~1--2, 413--423.
  \MR{2772258}

\bibitem[Dru17]{Dru17}
\bysame, \emph{A decomposition theorem for singular spaces with trivial
  canonical class of dimension at most five}, Invent. Math. \textbf{211} (2018),
  no. 1, 245--296.
\MR{3742759}

\bibitem[Fle84]{Fle84}
H.~Flenner, \emph{Restrictions of semistable bundles on projective varieties},
  Comment. Math. Helv. \textbf{59} (1984), no.~4, 635--650.
  \MR{0780080}

\bibitem[Fuj05]{Fuj05}
O.~Fujino, \emph{Addendum to: ``{T}ermination of 4-fold canonical flips''},
  Publ. Res. Inst. Math. Sci. \textbf{41} (2005), no.~1, 251--257.
  \MR{2115973}

\bibitem[Fuj11]{Fuj11}
\bysame, \emph{On {K}awamata's theorem}. In: Classification of algebraic varieties, pp. 305--315,
  EMS Ser. Congr. Rep., Eur. Math. Soc., Z\"urich, 2011.
\MR{2779478}

\bibitem[GKKP11]{GKKP11}
D.~Greb, S.~Kebekus, S.~Kov{\'a}cs, and Th. Peternell, \emph{Differential forms
  on log canonical spaces}, Publ. Math. Inst. Hautes \'Etudes Sci. \textbf{114} (2011),
87--169.  
\MR{2854859}

\bibitem[GKP16]{GKP16}
D.~Greb, S.~Kebekus, and Th. Peternell, \emph{Singular spaces with trivial
  canonical class}. In: Minimal models and extremal rays ({K}yoto, 2011), pp. 67--113, Adv.
  Stud. Pure Math., vol.~70, Math. Soc. Japan, Tokyo, 2016.
\MR{3617779}

\bibitem[GL13]{GL13}
Y.~Gongyo and B.~Lehmann, \emph{Reduction maps and minimal model theory},
  Compos. Math. \textbf{149} (2013), no.~2, 295--308.
  \MR{3020310}

\bibitem[GM17]{GM17}
Y.~Gongyo and S.-i. Matsumura, \emph{Versions of injectivity and extension
  theorems}, Ann. Sci. \'Ec. Norm. Sup\'er. (4) \textbf{50} (2017), no.~2,
  479--502.
  \MR{3621435}

\bibitem[Har77]{Har77}
R.~Hartshorne, \emph{Algebraic geometry}, Graduate Texts in Mathematics,
  vol.~52, Springer-Verlag, New York-Heidelberg, 1977.
  \MR{0463157}

\bibitem[Har80]{Har80}
\bysame, \emph{Stable reflexive sheaves}, Math. Ann. \textbf{254} (1980),
  no.~2, 121--176.
  \MR{0597077}
  
\bibitem[HM10]{HM10}
C.~D. Hacon and J.~M\textsuperscript{c}Kernan, \emph{Existence of minimal
  models for varieties of log general type. {II}}, J. Amer. Math. Soc.
  \textbf{23} (2010), no.~2, 469--490.
\MR{2601040}

\bibitem[HX13]{HX13}
C.~D. Hacon and C.~Xu, \emph{Existence of log canonical closures}, Invent.
  Math. \textbf{192} (2013), no.~1, 161--195.
  \MR{3032329}

\bibitem[K${}^+$92]{Kol92}
J.~Koll{\'a}r et~al., \emph{Flips and abundance for algebraic threefolds},
  Ast\'erisque 211, Soc.\ Math.\ France, Paris, 1992.
  \MR{1225842}

\bibitem[Kaw85a]{Kaw85b}
Y.~Kawamata, \emph{Minimal models and the {K}odaira dimension of algebraic
  fiber spaces}, J. Reine Angew. Math. \textbf{363} (1985), 1--46.
  \MR{0814013}

\bibitem[Kaw85b]{Kaw85}
\bysame, \emph{Pluricanonical systems on minimal algebraic varieties}, Invent.
  Math. \textbf{79} (1985), no. 3, 567--588.
\MR{0782236}

\bibitem[Kaw91]{Kaw91}
\bysame, \emph{On the length of an extremal rational curve}, Invent. Math.
  \textbf{105} (1991), no.~3, 609--611.
\MR{1117153}

\bibitem[Kaw92]{Kaw92}
\bysame, \emph{Abundance theorem for minimal threefolds}, Invent. Math.
  \textbf{108} (1992), no.~2, 229--246.
  \MR{1161091}

\bibitem[KM98]{KM98}
J.~Koll{\'a}r and S.~Mori, \emph{Birational geometry of algebraic varieties},
  Cambridge Tracts in Mathematics, vol. 134, Cambridge University Press,
  Cambridge, 1998.
  \MR{1658959}

\bibitem[KMM94]{KMM94}
S.~Keel, K.~Matsuki, and J.~M\textsuperscript{c}Kernan, \emph{Log abundance
  theorem for threefolds}, Duke Math.\ J. \textbf{75} (1994), no. 1, 99--119.
  \MR{1284817}

\bibitem[Kol13]{Kol13}
J.~Koll{\'a}r, \emph{Singularities of the minimal model program}, Cambridge
  Tracts in Mathematics, vol. 200, Cambridge University Press, Cambridge, 2013.
  With a collaboration of S{\'a}ndor Kov{\'a}cs.
  \MR{3057950}

\bibitem[Lai11]{Lai11}
C.-J. Lai, \emph{Varieties fibered by good minimal models}, Math. Ann.
  \textbf{350} (2011), no.~3, 533--547.
  \MR{2805635}

\bibitem[Laz04]{Laz04}
R.~Lazarsfeld, \emph{Positivity in algebraic geometry. {I}, {II}}, Ergebnisse
  der Mathematik und ihrer Grenzgebiete, vol. 48, 49, Springer-Verlag, Berlin,
  2004.
  \MR{2095471}, \MR{2095472}

\bibitem[Leh13]{Leh13}
B.~Lehmann, \emph{Comparing numerical dimensions}, Algebra Number Theory
  \textbf{7} (2013), no.~5, 1065--1100.
  \MR{3101072}

\bibitem[LOP16]{LOP16}
V.~Lazi\'c, K.~Oguiso, and Th. Peternell, \emph{Nef line bundles on
  {C}alabi-{Y}au threefolds, {I}}, preprint 2016.
  \href{https://arxiv.org/abs/1601.01273}{arXiv:1601.01273}
  
\bibitem[Mat99]{Mat99}
D.~Matsushita, \emph{On fibre space structures of a projective irreducible
  symplectic manifold}, Topology \textbf{38} (1999), no.~1, 79--83.
  \MR{1644091}

\bibitem[Miy87a]{Miy87}
Y.~Miyaoka, \emph{The {C}hern classes and {K}odaira dimension of a minimal
  variety}. In: Algebraic geometry, {S}endai, 1985, pp.~449--476, Adv. Stud. Pure Math., vol.~10,
  North-Holland, Amsterdam, 1987.
  \MR{0946247}

\bibitem[Miy87b]{Miy87a}
\bysame, \emph{Deformations of a morphism along a foliation and applications}. In:
  Algebraic geometry, {B}owdoin, 1985 ({B}runswick, {M}aine, 1985), pp.~245--268, Proc.
  Sympos. Pure Math., vol.~46, Amer. Math. Soc., Providence, RI, 1987.
  \MR{0927960}

\bibitem[Miy88a]{Miy88b}
\bysame, \emph{Abundance conjecture for {$3$}-folds: case {$\nu=1$}},
  Compositio Math. \textbf{68} (1988), no.~2, 203--220.
  \MR{0966580}

\bibitem[Miy88b]{Miy88a}
\bysame, \emph{On the {K}odaira dimension of minimal threefolds}, Math. Ann.
  \textbf{281} (1988), no.~2, 325--332.
  \MR{0949837}

\bibitem[Mor88]{Mor88}
S.~Mori, \emph{Flip theorem and the existence of minimal models for 3-folds},
  J.\ Amer.\ Math.\ Soc. \textbf{1} (1988), no.~1, 117--253.
  \MR{0924704}

\bibitem[MP97]{MP97}
Y.~Miyaoka and Th. Peternell, \emph{Geometry of higher-dimensional algebraic
  varieties}, DMV Seminar, vol.~26, Birkh\"auser Verlag, Basel, 1997.
  \MR{1468476}

\bibitem[MR82]{MR82}
V.~B. Mehta and A.~Ramanathan, \emph{Semistable sheaves on projective varieties
  and their restriction to curves}, Math. Ann. \textbf{258} (1981/82), no.~3,
  213--224.
  \MR{0649194}

\bibitem[Nak04]{Nak04}
N.~Nakayama, \emph{Zariski-decomposition and abundance}, MSJ Memoirs, vol.~14,
  Mathematical Society of Japan, Tokyo, 2004.
  \MR{2104208}

\bibitem[OSS80]{OSS80}
C.~Okonek, M.~Schneider, and H.~Spindler, \emph{Vector bundles on complex
  projective spaces}, Progress in Mathematics, vol.~3, Birkh\"auser, Boston,
  Mass., 1980.
  \MR{0561910}

\bibitem[Sho85]{Sho85}
V.~V. Shokurov, \emph{A nonvanishing theorem}, Izv. Akad. Nauk SSSR Ser. Mat.
  \textbf{49} (1985), no.~3, 635--651.
  \MR{0794958}

\bibitem[Sho92]{Sho92}
\bysame, \emph{Three-dimensional log perestroikas}, Izv. Ross. Akad. Nauk Ser.
  Mat. \textbf{56} (1992), no.~1, 105--203; translation in
Russian Acad. Sci. Izv. Math. \textbf{40} (1993), no. 1, 95--202.
  With an appendix by Yujiro Kawamata.
  \MR{1162635}
  
\bibitem[Taj14]{Taj14}
B.~Taji, \emph{Birational positivity in dimension 4}, Ann. Inst. Fourier
  (Grenoble) \textbf{64} (2014), no.~1, 203--216. With an appendix by
  Fr{\'e}d{\'e}ric Campana.
  \MR{3330547}

\bibitem[Tak03]{Tak03}
S.~Takayama, \emph{Local simple connectedness of resolutions of log-terminal
  singularities}, Internat. J. Math. \textbf{14} (2003), no.~8, 825--836.
  \MR{2013147}

\bibitem[Ver10]{Ver10}
M.~Verbitsky, \emph{Hyper{K}\"ahler {SYZ} conjecture and semipositive line
  bundles}, Geom. Funct. Anal. \textbf{19} (2010), no.~5, 1481--1493.
  \MR{2585581}

\bibitem[Wil94]{Wi94}
P.~M.~H. Wilson, \emph{The existence of elliptic fibre space structures on
  {C}alabi-{Y}au threefolds}, Math. Ann. \textbf{300} (1994), no.~4, 693--703.
  \MR{1314743}

\end{thebibliography}
\bibliographymark{References}
% This would change the heading "References" by "Bibliography"
% \renewcommand{\refname}{Bibliography}

\end{document}